\newcommand*{\abs}[1]{\left|#1\right|}
\newcommand*{\set}[1]{\left\{#1\right\}}
\newcommand{\ve}{\mathbf}
\newcommand{\eps}{\varepsilon}
\renewcommand{\P}{\mathbf P}
\newcommand{\E}{\mathbf E}
\newcommand{\R}{\mathbb R}
\newcommand{\T}{\mathbf T}
\newcommand{\Wt}{\widetilde W}
\newtheorem{theorem}{Theorem}[section]
\newtheorem{lemma}[theorem]{Lemma}
\newtheorem{corollary}[theorem]{Corollary}
\theoremstyle{remark}
\newtheorem{rem}{Remark}
\theoremstyle{definition}
\begin{document}
\title[Asymptotic growth of trajectories of mBm]{Asymptotic growth of trajectories of  multifractional Brownian motion, with statistical applications to drift parameter estimation}

\author[M. Dozzi]{Marco Dozzi}
\address{Institut \'Elie Cartan,
Universit\'e du Lorraine,
Campus-des-Aiguillettes,
BP 70239,
F-54506 Vandoeuvre-les-Nancy Cedex,
France}
\email{marco.dozzi@univ-lorraine.fr}

\author[Y. Kozachenko]{Yuriy Kozachenko}
\address{Department of Probability, Statistics and Actuarial Mathematics,
Mechanics and Mathematics Faculty,
Taras Shevchenko National University of Kyiv,
64, Volodymyrs'ka St.,
01601 Kyiv, Ukraine}
\email{yvk@univ.kiev.ua}

\author[Y. Mishura]{Yuliya Mishura}
\address{Department of Probability, Statistics and Actuarial Mathematics,
Mechanics and Mathematics Faculty,
Taras Shevchenko National University of Kyiv,
64, Volodymyrs'ka St.,
01601 Kyiv, Ukraine}
\email{myus@univ.kiev.ua}

\author[K. Ralchenko]{Kostiantyn Ralchenko}
\address{Department of Probability, Statistics and Actuarial Mathematics,
Mechanics and Mathematics Faculty,
Taras Shevchenko National University of Kyiv,
64, Volodymyrs'ka St.,
01601 Kyiv, Ukraine}
\email{k.ralchenko@gmail.com}

\keywords{Gaussian process, multifractional Brownian motion, parameter estimation, consistency, strong consistency, stochastic differential equation}
\subjclass[2010]{60G15, 60G22, 62F10, 62F12 }

\begin{abstract}
We construct the least-square estimator for the unknown drift parameter in the multifractional Ornstein--Uhlenbeck model and establish
 its strong consistency in the non-ergodic case. The proofs are based on the asymptotic bounds with probability 1 for the rate of the growth of the trajectories of multifractional Brownian motion (mBm) and of some other functionals of mBm, including increments and fractional derivatives. As the auxiliary results   having independent interest, we produce the   asymptotic bounds with probability 1 for the rate of the growth of the trajectories of the general Gaussian process and some functionals of it, in terms of the covariance function of its increments.
\end{abstract}

\maketitle

\section{Introduction} The goal of the present paper is twofold. First, we get the asymptotic bounds with probability 1 for the rate of the growth of the trajectories of multifractional Brownian motion (mBm) and of some other functionals of mBm, including increments and fractional derivatives. Second, we apply these bounds to construct consistent estimators of the unknown drift parameter in the linear and Ornstein--Uhlenbeck model involving mBm. As the auxiliary results having independent interest, we produce the   asymptotic bounds with probability 1 for the rate of the growth of the trajectories of the general Gaussian process and some functionals of it, in terms of the covariance function of its increments. The results obtained generalize the  respective results concerning
asymptotic bounds with probability 1 for the rate of the growth of the trajectories of  fractional Brownian motion (fBm)  from~\cite{KMM} and numerous results concerning consistent estimators of the unknown drift parameter in the linear and Ornstein--Uhlenbeck model involving fBm. The  extended survey of these results is contained, e.g.,  in the paper~\cite{kumirase}. The methods of constructing the estimators and their properties in the fractional Brownian case   essentially depend on factors such as the value of Hurst index $H$, more precisely,  cases  $H>1/2$ and  $H<1/2$ differ substantially; on the sign of unknown drift parameter $\theta$ and  also on whether the continuous and discrete observations. The MLE estimators of the unknown drift parameter for fractional Ornstein--Uhlenbeck process with $H\geq 1/2$ and   any $\theta\in\R$ were constructed in \cite{kllebr} with the help of so called Molchan fundamental martingale. The same estimator was studied in the paper \cite{tuvi} for $H<1/2$. In the paper \cite{hunu} the analog of the least-square estimator of the form $\widehat{\theta}_T=\frac{\int_0^TX_tdX_t}{\int_0^TX^2_tdt}$ was constructed for $H\geq 1/2$ and $\theta<0$ (the ergodic case) was studied in the supposition that the integral $\int_0^TX_tdX_t$ is the divergence-type one. As an alternative estimator, $\left(\frac{1}{T}\int_0^TX^2_tdt\right)^{\frac{1}{2H}}$ was proposed, and its properties are essentially based on the ergodic properties of the fractional Ornstein-Uhlenbeck process with negative drift.  In the papers  \cite{kumirase}, \cite{mira} and \cite{mirasesh} the discretized estimators were proposed. Another approach to discretization was studied in \cite{azmo}. Note that the MLE is hardly discretized because of singular kernels and one should choose the nonstandard estimators for discretization, that was done in these papers. Mention also that the discretized estimator proposed in \cite{kumirase} for $H<1/2$, in reality works properly only for $\theta\geq 0$ and   apparently does not work in   the ergodic case. In general, the problem of the discretization for $H<1/2$ and $\theta<0$ is open. Contrary to fractional case, multifractional Ornstein--Uhlenbeck model was not considered in its entirety, even though these models are gaining increasing popularity now. We can mention only the paper \cite{EMESO}, where the least square estimator is studied for the non-ergodic Ornstein--Uhlenbeck process with some special  Gaussian process, the case that includes not only fractional but, e.g.,   subfractional and bi-fractional Brownian motions.   In the present  paper we consider Ornstein--Uhlenbeck multifractional processes   when the index $H_t$ of multifractionality is  bounded from below by some constant exceeding $\frac{1}{2}$, and observations are continuous in time. We consider non-ergodic case, because the asymptotical bounds for the growth of Ornstein--Uhlenbeck multifractional process work properly in the non-ergodic case. The problem of the drift parameter estimation in the multifractional  Ornstein--Uhlenbeck model is still open. Note that the linear model with unknown drift parameter is considered, and the  properties of the estimator are based on the asymptotic growth of the trajectories of mBm.
The paper is organized as follows.
Section~\ref{sec:Gauss} contains auxiliary
results for the asymptotic growth of Gaussian processes defined on arbitrary parameter set, on the half-axis, and in the strip on the plane.
In Section~\ref{sec:mBm} we  establish the asymptotic  growth with probability 1 of mBm and its increments.
In Section~\ref{sec:estimation} we investigate two statistical models with mBm: the linear model and the multifractional  Ornstein--Uhlenbeck process.
For these models we propose estimators for an unknown drift parameter and prove their strong consistency.

\section{ Exponential maximal bounds and asymptotic growth of trajectories of Gaussian processes}\label{sec:Gauss}
Let $\T$ be a parameter set and
$X=\set{X(t),t\in\T}$
be a centered Gaussian process.
Introduce the notation
\[
\rho_X(t,s)=\left(\E(X(t)-X(s))^2\right)^{1/2}, s,t\in \T.
\]
Evidently, $\rho_X$ is a pseudometric on $\T$. Also, denote
$$m(\T)=\sup_{t\in\T}\left(\E\abs{X(t)}^2\right)^{1/2}.$$
Throughout the section we assume that the following conditions hold.

\begin{enumerate}[({A}1)]
\item\label{A1}
$m(\T)<\infty$.
\item\label{A2}
The space $(\T,\rho_X)$ is separable and the process $X$ is separable on this space.
\end{enumerate}
\subsection{ Exponential maximal upper  bound for Gaussian process in terms of metric massiveness} In this subsection we present the general results concerning
exponential maximal upper  bound for Gaussian process defined on an arbitrary parameter set, in terms of metric massiveness. Let $N(u), u>0$ be the metric massiveness  of the space $(\T,\rho_X)$, that is, $N(u)$ is the number of open balls in the minimal
$u$-covering of $(\T,\rho_X)$. Consider the function  $r(x)$, $x\ge1$ satisfying the following properties:
\begin{itemize}
\item[$(i)$] $r$ is non-negative and nondecreasing;
 \item[$(ii)$]   $r(e^y), y\geq 0$ is a  convex function.
\end{itemize}
Introduce one more notation: let $I_r(x)=\int_0^{x}r(N(v))dv, x>0$.
\begin{theorem}\label{th:1}
Let $I_r(m(\T))<\infty.$ Then the following bounds hold:
\begin{itemize}
\item[$(i)$] For   any $\theta\in(0,1)$  and any  $\lambda>0$
\begin{equation}\label{eq:2}
\E\exp\set{\lambda\sup_{t\in\T}\abs{X(t)}}\le 2A_1(\lambda, \theta),
\end{equation}
where
\[
A_1(\lambda,\theta)= \exp\set{\frac{\lambda^2m^2(\T)}{2(1-\theta)^2}}
r^{(-1)}\left(\frac{I_r(\theta m(\T))}{\theta m(\T)}\right),
\]
$r^{(-1)}(t)$ is the generalized inverse function of $r(t)$ that is
\[
r^{(-1)}(t)=\sup\set{u\ge0:r(u)\le t}.
\]
\item[$(ii)$] For any $\theta\in(0,1)$ and any  $\mu>0$
\begin{equation}\label{eq:3}
\P\set{\sup_{t\in\T}\abs{X(t)}\ge \mu}\le2A_2(\mu,\theta),
\end{equation}
where
\[
A_2(\mu,\theta)=\exp\set{-\frac{\mu^2(1-\theta)^2}{2m^2(\T)}}
r^{(-1)}\left(\frac{I_r(\theta m(\T))}{\theta m(\T)}\right).
\]
\end{itemize}
\end{theorem}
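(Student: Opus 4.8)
The plan is to prove the exponential-moment bound (i) by a chaining argument over successively finer $\rho_X$-nets, and then to read off (ii) from (i) by an exponential Chebyshev inequality optimized in $\lambda$. Since the two statements are linked this way, I would first note the easy reduction: for any $\lambda>0$, Markov's inequality gives $\mathbf P\{\sup_{t\in\mathbf T}|X(t)|\ge\mu\}\le e^{-\lambda\mu}\,\mathbf E\exp\{\lambda\sup_{t\in\mathbf T}|X(t)|\}\le 2e^{-\lambda\mu}A_1(\lambda,\theta)$. The $\lambda$-dependent part of $e^{-\lambda\mu}A_1(\lambda,\theta)$ is $\exp\{-\lambda\mu+\lambda^2m^2(\mathbf T)/(2(1-\theta)^2)\}$, which is minimized at $\lambda^\ast=\mu(1-\theta)^2/m^2(\mathbf T)$, with minimal value exactly $\exp\{-\mu^2(1-\theta)^2/(2m^2(\mathbf T))\}$. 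Substituting $\lambda^\ast$ reproduces $A_2(\mu,\theta)$ and yields (ii), so the whole content is in (i).

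For (i), I would use assumption (A\ref{A2}) to replace $\sup_{t\in\mathbf T}|X(t)|$ by a monotone limit of maxima over finite subsets, so that it suffices to bound $\mathbf E\exp\{\lambda\max_{t\in S}|X(t)|\}$ uniformly over finite $S\subset\mathbf T$ and then pass to the limit by monotone convergence. Fix a geometric sequence of scales $u_k\downarrow 0$ with $u_0=\theta m(\mathbf T)$, and for each $k$ choose a minimal $u_k$-net, of cardinality $N(u_k)$, together with nearest-point projections $\pi_k$ arranged so that $\pi_{k-1}=\pi_{k-1}\circ\pi_k$. Telescoping along the chain gives $X(t)=X(\pi_0(t))+\sum_{k\ge1}\bigl(X(\pi_k(t))-X(\pi_{k-1}(t))\bigr)$, where the level-$k$ increment is a centered Gaussian variable with variance at most $(u_{k-1}+u_k)^2$ and, as $t$ ranges over $\mathbf T$, takes at most $N(u_k)$ distinct values. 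Taking the supremum inside the sum and applying the elementary maximal bound $\mathbf E\exp\{\lambda\max_{i\le N}|\xi_i|\}\le 2N\exp\{\lambda^2\sigma^2/2\}$ (union bound plus the Gaussian moment generating function) at each level, I would combine the levels by H\"older's inequality with a weight sequence $w_k\ge0$, $\sum_k w_k=1$. This produces a bound of the form $2\prod_k N(u_k)^{w_k}\cdot\exp\bigl\{\frac{\lambda^2}{2}\sum_k \sigma_k^2/w_k\bigr\}$, where $\sigma_k^2$ is the level-$k$ variance budget.

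The two products are then handled separately, and this is where the hypotheses on $r$ enter. Because $y\mapsto r(e^y)$ is convex and $N(u_k)\ge1$, Jensen's inequality gives $r\bigl(\prod_k N(u_k)^{w_k}\bigr)\le\sum_k w_k\,r(N(u_k))$, hence $\prod_k N(u_k)^{w_k}\le r^{(-1)}\bigl(\sum_k w_k r(N(u_k))\bigr)$ by monotonicity of the generalized inverse. Choosing the weights proportional to the gaps between consecutive scales turns $\sum_k w_k r(N(u_k))$ into a Riemann sum for $(\theta m(\mathbf T))^{-1}\int_0^{\theta m(\mathbf T)}r(N(v))\,dv=I_r(\theta m(\mathbf T))/(\theta m(\mathbf T))$; since $N(\cdot)$ is nonincreasing and $r$ is nondecreasing, the integrand $r(N(v))$ is nonincreasing, so the sum is dominated by the integral in the required direction. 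This yields the factor $r^{(-1)}(I_r(\theta m(\mathbf T))/(\theta m(\mathbf T)))$.

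The main obstacle is the simultaneous calibration in the previous step: the same weight sequence and the same geometric scales must make the variance sum $\sum_k\sigma_k^2/w_k$ collapse to exactly $m^2(\mathbf T)/(1-\theta)^2$ (so as to match the Gaussian prefactor of $A_1$), while the entropy terms assemble into the integral truncated at $\theta m(\mathbf T)$; the parameter $\theta$ is precisely the knob balancing the variance inflation $(1-\theta)^{-2}$ against the truncation level of $I_r$. I would also need the hypothesis $I_r(m(\mathbf T))<\infty$ to guarantee that the weighted series converges and that the chaining remainder vanishes as $u_k\downarrow0$, so that the finite-net bounds pass to the supremum; together with $\prod_k 2^{w_k}=2$ this produces the leading constant and completes (i).
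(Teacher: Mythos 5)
Your proposal is essentially the paper's own proof: geometric nets with nested nearest-point projections (the paper's $\alpha$-procedure), a per-level union bound plus the Gaussian moment generating function, H\"older's inequality with weights summing to one, Jensen's inequality via convexity of $y\mapsto r(e^y)$ together with monotonicity of $r(N(\cdot))$ to dominate the weighted entropy sum by $I_r(\theta m(\T))/(\theta m(\T))$, and for (ii) the Chebyshev bound optimized at $\lambda=\mu(1-\theta)^2/m^2(\T)$. The calibration you flag as the main obstacle closes exactly as in the paper once you use the variance bound your nesting actually provides --- since $\pi_{k-1}(\pi_k(t))$ is a nearest point of the coarser net, $\rho_X(\pi_k(t),\pi_{k-1}(\pi_k(t)))\le u_{k-1}$, so $\sigma_k^2\le u_{k-1}^2$ rather than $(u_{k-1}+u_k)^2$ --- because then the gap-proportional weights $w_k=(1-\theta)\theta^k$ give $\sum_k\sigma_k^2/w_k=m^2(\T)/(1-\theta)^2$ on the nose, whereas with $(u_{k-1}+u_k)^2$ no weight choice can reach that constant (by Cauchy--Schwarz the variance sum is then at least $m^2(\T)(1+\theta^2)^2/(1-\theta)^2$).
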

\begin{proof} $(i)$ First, we simplify the notation: let  $m:=m(\T).$ Now our goal is to  establish the following bound: for arbitrary $\theta\in(0,1)$ and any sequence  $r_n>0$  such  that
$\sum_{n=1}^\infty\frac{1}{r_n}=1$,
\begin{equation}\label{eq:4}
\E\exp\set{\lambda\sup_{t\in\T}\abs{X(t)}}
\le\prod_{n=1}^\infty\left[2N( m\theta^n)
\exp\set{\tfrac12\lambda^2r_n^2\theta^{2(n-1)}m^2}\right]^{\frac{1}{r_n}}.
\end{equation}
Let $\theta \in (0, 1)$ and put $u_n =
m \theta^n,$ $n\geq 0$.
Denote by $ S_n$ a minimal
$u_n$-net in the set $\T$ with respect to the
pseudometric $\rho_X$ and put
$S = \bigcup_{n=0}^\infty S_n$.
According to condition  $(A\ref{A2})$, the set $S$ is countable and everywhere dense in $\T$ with respect to the pseudometric $\rho_X$, and the process $X$ is continuous in probability in $(\T, \rho_X)$.
Therefore the set $S$ is a $\rho_X$-separability set for the process $X$ and moreover
\[
\sup_{t \in \T} \abs{X(t)} = \sup_{t \in S} \abs{X(t)}
\]
with probability 1.

Suppose that $ t \in S$.
Then there exists a number
$ n(t)$ such that $t \in S_{n(t)}$.
Define a function $\alpha_k:S \to S_k$,
$k \geq 0$   as   $\alpha_k(x) =x$
if $x \in S_k$ and $\alpha_k (x)$ is the point of $S_k$ closest to $x$ if $x \notin S_k$.
If there is more than one closest point then we may choose
any of these points.
The family of maps
$\set{\alpha_k, k \geq 0}$
is called the $\alpha$-procedure for choosing points in $S$.
Using the $\alpha$-procedure we can choose a sequence of points $t_{n(t)} = t$, $t_{n(t)-1} = \alpha_{n(t)-1}\left(t_{n(t)}\right), \ldots , t_1 = \alpha_1 (t_2)$
such that
$t_k \in S_k$, $k =1, \ldots , n(t)$ and $\rho_X( t_k, \alpha_{k-1} (t_k))\leq u_{k-1}$.

Evidently,
\[
X(t) = X(t_1) + \sum_{k=2}^{n(t)} (X(t_k) - X(t_{k-1})).
\]
Therefore we have an upper  bound
\[
\sup_{t \in S}\abs{X(t)} \leq \max_{s \in S_1} \abs{X(s)} +
\sum_{n=2}^\infty \max_{s \in S_n}\abs{X(s) - X(\alpha_{n-1} (s))}.
\]
Take any sequence of numbers $r_n >0$, $n \geq 1$ such that
$\sum_{n=1}^\infty r_n^{-1} =1$.
It follows from the   H\"{o}lder  inequality that  for any $\lambda > 0$
\begin{multline}\label{bound11}
\E \exp \set{ \lambda \sup_{t \in S}\abs{X(t)}}
\leq\E \exp \set{ \lambda \left(\max_{s \in S_1}\abs{X(s)}+ \sum_{n=2}^\infty
\max_{s \in S_n} \abs{X(s) - X(\alpha_{n-1} (s))}\right)}\\
\leq\left[ \E \exp \set{\lambda r_1  \max_{s \in S_1} |X(s)|}
\right]^{\frac{1}{r_1}}
\prod_{n=2}^\infty \left[ \E \exp \set{ \lambda r_n \max_{s \in S_n}
\abs{X(s) - X(\alpha_{n-1} (s))}} \right]^{\frac{1}{r_n}}.
\end{multline}

Furthermore, all the multipliers in the right-hand side of \eqref{bound11}, except the 1st one,  can be estimated as
 \begin{equation}\begin{gathered}\label{bound12}
 \E \exp \set{ \lambda r_n \max_{s \in S_n}
\abs{X(s) - X(\alpha_{n-1} (s))}} \\
\leq N(u_n) \max_{s \in S_n } \E \exp \set{ \lambda r_n
\abs{X(s) - X(\alpha_{n-1}(s))}},
 \end{gathered}\end{equation}
and, in addition,
\[
\left(\E\abs{X(s) - X(\alpha_{n-1} (s))}^2\right)^{\frac{1}{2}} =
\rho_X (s, \alpha_{n-1} (s))\leq u_{n-1} = \theta^{n-1} u_0
\]
 for $s \in S_n.$ Therefore, for any $n \geq 2$
 \begin{equation}\begin{gathered}\label{bound13}
 \max_{s \in S_n}\E \exp \set{ \lambda r_n
\abs{X(s) - X(\alpha_{n-1}(s))}}
 \leq \max_{s \in S_n} \Big( \E \exp \set{ \lambda r_n
(X(s) - X(\alpha_{n-1}(s)) )} \\
+ \E \exp \set{ - \lambda r_n
(X(s) - X(\alpha_{n-1}(s)) )}\Big)\\
 =  \max_{s \in S_n } 2 \exp \set{ \frac{\lambda^2}{2} r_n^2
\E (X(s) - X(\alpha_{n-1}(s)))^2}
\leq 2 \exp \set{ \frac{\lambda^2}{2} r_n^2
 ( \theta^{n-1} u_0 )^2}.
\end{gathered}\end{equation}

Now we estimate the first multiplier in the right-hand side of \eqref{bound11}:
\begin{equation}\begin{gathered}\label{bound14}
\E \exp \set{ \lambda r_1 \max_{s \in S_1} \abs{X(s)}}
 \leq N(u_1) \max_{s \in S_1} \E \exp \set{ \lambda r_1
\abs{X(s)}}\\
 \leq 2 N(u_1) \exp \set{ \frac{\lambda^2 r_1^2}{2}
\max_{s \in S_1} \E \abs{X(s)}^2}
 \leq 2 N(u_1) \exp \set{ \frac{\lambda^2 r_1^2}{2}
u_0^2}.
\end{gathered}\end{equation}
Taking into account that it follows from the separability of $X$ that
\[
\E \exp \set{ \lambda \sup_{t \in \T} \abs{X(t)}} =
\E \exp \set{\lambda \sup_{t \in S} \abs{X(t)}},
\]
we get inequality   \eqref{eq:4} from \eqref{bound11}--\eqref{bound14}.
It follows from \eqref{eq:4} that
\begin{equation}\label{eq:5}
\E\exp\set{\lambda\sup_{t\in\T}\abs{X(t)}}
\le2\exp\set{\sum_{n=1}^\infty\frac{1}{r_n}H(m\theta^n)
+\sum_{n=1}^\infty\frac{\lambda^2m^2}{2}\theta^{2(n-1)}r_n},
\end{equation}
where
$H(u)=\log N(u)$ is the metric entropy.
Now, choose  $r_n=\frac{1}{\theta^{n-1}(1-\theta)}$.
Then
\begin{equation}\label{eq:6}
\E\exp\set{\lambda\sup_{t\in\T}\abs{X(t)}}
\le2\exp\set{(1-\theta)\sum_{n=1}^\infty\theta^{n-1}H(m\theta^n)
+\frac{\lambda^2m^2}{2(1-\theta)^2}}.
\end{equation}
Since $r(e^x)$ is a convex function, we have that
\begin{equation}\label{eq:7}
\begin{split}
r^{(-1)}&\left(r\left(\exp\set{\sum_{n=1}^\infty(1-\theta)\theta^{n-1}H(\theta^nm)}\right)\right)\\
&\le r^{(-1)}\left(\sum_{n=1}^\infty(1-\theta)\theta^{n-1}r\left(\exp\set{H(\theta^nm)}\right)\right)\\
&=r^{(-1)}\left(\sum_{n=1}^\infty(1-\theta)\theta^{n-1}r\left(N(\theta^nm)\right)\right)\\
&\le r^{(-1)}\left(\sum_{n=1}^\infty(1-\theta)\theta^{n-1}\frac{1}{m\theta^n(1-\theta)}
    \int_{\theta^{n+1}m}^{\theta^nm}r(N(u))du\right)\\
&= r^{(-1)}\left(\frac{1}{m\theta}\int_0^{m\theta}r(N(u))du\right).
\end{split}
\end{equation}
Now, inequality \eqref{eq:2}   follows from \eqref{eq:6} and \eqref{eq:7}.

$(ii)$  Now we are in position to establish inequality  \eqref{eq:3}.
Let $u>0$, $0<\theta<1$, $\lambda>0$.
Then Chebyshev's inequality and \eqref{eq:2} yield that
\begin{align*}
\P\set{\sup_{t\in\T}\abs{X(t)}\ge u}
&\le\E\exp\set{\lambda\sup_{t\in\T}\abs{X(t)}}\exp\set{-\lambda u}\\
&\le 2r^{(-1)}\left(\frac{I_r(\theta m)}{\theta m}\right)
\exp\set{\frac{\lambda^2m^2}{2(1-\theta)^2}-\lambda u}.
\end{align*}
Minimizing $\frac{\lambda^2m^2}{2(1-\theta)^2}-\lambda u$ with respect to $\lambda>0$, we note that   minimum is achieved at the point $\lambda=\frac{u(1-\theta)^2}{m^2}$,
whence \eqref{eq:3} immediately follows.
\end{proof}

Applying this result to the parameter set $\T=[a,b]$, we get the following result.

\begin{corollary}\label{cor:1}
Let $\T=[a,b], $
$X=\set{X(t),t\in[a,b]}$
be a centered separable Gaussian  process  and $m:=m([a,b])=\sup_{t\in[a,b]}\left(\E\abs{X(t)}^2\right)^{1/2}<\infty$.
Assume that there exists a   strictly increasing function
$\sigma=\set{\sigma(h), h>0}$
such that
$\sigma(h)>0$, $h>0$,
$\sigma(h)\downarrow0$ as $h\downarrow0$,
and
\[\sup_{\abs{t-s}<h}\left(\E\abs{X(t)-X(s)}^2\right)^{1/2}\le\sigma(h).\]
Then for   any $\theta\in(0,1)$ and any  $\lambda>0$
\begin{equation}\label{bound17}
\E\exp\set{\lambda\sup_{t\in[a,b]}\abs{X(t)}}\le 2A_3(\lambda,\theta),
\end{equation}
where
\begin{equation}\label{eq:a1}
A_3(\lambda,\theta)=\exp\set{\frac{\lambda^2m^2}{2(1-\theta)^2}}
r^{(-1)}\left(\frac{\widetilde I_r(\theta m)}{\theta m}\right),
\end{equation}
and \[
\widetilde I_r(x)=\int_0^{x}
r\left(\frac{b-a}{2\sigma^{(-1)}(v)}+1\right)dv.
\]
Indeed, in this case condition $(A\ref{A2})$ holds and
$N(v)\le\frac{b-a}{2\sigma^{(-1)}(v)}+1$, whence \eqref{bound17}--\eqref{eq:a1} immediately follow.
\end{corollary}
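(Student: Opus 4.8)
The plan is to deduce the Corollary directly from Theorem~\ref{th:1}$(i)$ applied to $\T=[a,b]$; the entire task reduces to verifying the separability hypothesis $(A\ref{A2})$ and producing the stated upper bound $N(v)\le\frac{b-a}{2\sigma^{(-1)}(v)}+1$ on the metric massiveness, after which the claim is a matter of monotonicity.

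First I would check $(A\ref{A2})$. Since the increment bound gives $\rho_X(t,s)\le\sigma(\abs{t-s})$ and $\sigma(h)\downarrow0$ as $h\downarrow0$, the identity map from $[a,b]$ equipped with the Euclidean metric to $([a,b],\rho_X)$ is continuous. Consequently the image of the countable Euclidean-dense set $[a,b]\cap\mathbb Q$ is $\rho_X$-dense, so $([a,b],\rho_X)$ is separable; continuity of $X$ in $\rho_X$ together with the assumed separability of the process then yields separability of $X$ on $([a,b],\rho_X)$. Hypothesis $(A\ref{A1})$ is exactly $m<\infty$, which is assumed.

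The main step is the covering estimate. Fix $v>0$ and set $h=\sigma^{(-1)}(v)$, so that $\sigma(h)=v$ (the generalized inverse is well defined because $\sigma$ is strictly increasing). For any point $c\in[a,b]$ and any $t$ with $\abs{t-c}<h$ the hypothesis gives $\rho_X(t,c)\le\sigma(h)=v$, so every Euclidean interval of length $2h=2\sigma^{(-1)}(v)$ is contained in a single $\rho_X$-ball of radius $v$. Covering $[a,b]$ by $\lceil(b-a)/(2\sigma^{(-1)}(v))\rceil$ consecutive intervals of this length therefore produces a $v$-covering of $([a,b],\rho_X)$, whence
\[
N(v)\le\Big\lceil\frac{b-a}{2\sigma^{(-1)}(v)}\Big\rceil\le\frac{b-a}{2\sigma^{(-1)}(v)}+1.
\]
I expect the bookkeeping around strict versus non-strict inequalities (open versus closed balls) and the behaviour of $\sigma^{(-1)}$ near the endpoints of the range of $\sigma$ to be the only delicate point here.

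Finally I would feed this into Theorem~\ref{th:1}. Because $r$ is nondecreasing, the displayed bound on $N$ yields $I_r(x)=\int_0^x r(N(v))\,dv\le\int_0^x r\big(\tfrac{b-a}{2\sigma^{(-1)}(v)}+1\big)\,dv=\widetilde I_r(x)$ for every $x>0$; in particular the integrability hypothesis $I_r(m)<\infty$ of Theorem~\ref{th:1} is inherited from finiteness of $\widetilde I_r(m)$. Since the generalized inverse $r^{(-1)}$ is itself nondecreasing, applying it to the monotone relation $I_r(\theta m)/(\theta m)\le\widetilde I_r(\theta m)/(\theta m)$ gives $A_1(\lambda,\theta)\le A_3(\lambda,\theta)$, and \eqref{bound17}--\eqref{eq:a1} follow at once from \eqref{eq:2}.
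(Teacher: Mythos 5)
Your proposal is correct and takes essentially the same route as the paper: the paper's own one-line justification of this corollary is precisely that condition (A\ref{A2}) holds and that $N(v)\le\frac{b-a}{2\sigma^{(-1)}(v)}+1$, after which Theorem~\ref{th:1} gives the bound. Your write-up simply supplies the standard details the paper leaves implicit — the verification of separability, the interval-covering argument for the massiveness bound, and the monotonicity of $r$ and $r^{(-1)}$ that turns $I_r\le\widetilde I_r$ into $A_1(\lambda,\theta)\le A_3(\lambda,\theta)$ — all of which are sound.
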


\begin{corollary}\label{cor:2}
Let we can put
$\sigma(h)=ch^\beta$ with  $c>0$, $0<\beta\le1$   in Corollary~\ref{cor:1}.
Then for    any $\theta\in(0,1)$ and any  $\lambda>0$
\begin{equation}\label{eq:a2}
\E\exp\set{\lambda\sup_{t\in[a,b]}\abs{X(t)}}
\le 2^{\frac2\beta-1}\exp\set{\frac{\lambda^2m^2}{2(1-\theta)^2}}
\left(\frac{2^{2/\beta-1}(b-a)c^{1/\beta}}{(\theta m)^{1/\beta}}+1\right).
\end{equation}

Indeed, consider
$r(x)=x^{\alpha}-1$, $x\ge1$,
where $0<\alpha<\beta$.
Since
$\sigma^{(-1)}(s)=\left(\frac{s}{c}\right)^{1/\beta}$,
we have
\begin{align*}
\widetilde I_r(\theta m)
&=\int_0^{\theta m}\left(\left(\frac{b-a}{2\sigma^{(-1)}(s)}+1\right)^\alpha-1\right) ds
\le\int_0^{\theta m}\left(\frac{b-a}{2\sigma^{(-1)}(s)}\right)^\alpha ds\\
&=\int_0^{\theta m}\left(\frac{(b-a)c^{1/\beta}}{2s^{1/\beta}}\right)^\alpha ds
=\left(\frac{(b-a)c^{1/\beta}}{2}\right)^\alpha
   \frac{(\theta m)^{1-\alpha/\beta}}{1-\frac\alpha\beta}.
\end{align*}
Therefore in this case
\[
A_3(\lambda,\theta)\le\exp\set{\frac{\lambda^2m^2}{2(1-\theta)^2}}
\left(\left(\frac{(b-a)c^{1/\beta}}{2}\right)^\alpha\frac{1}{\left(1-\frac\alpha\beta\right)(\theta m)^{\alpha/\beta}}+1\right)^{\frac1\alpha}.
\]
Applying the elementary inequality
$(a+b)^p\le2^{p-1}(a^p+b^p)$, $p\ge1$,
we get
\begin{equation}\label{eq:a3}
A_3(\lambda,\theta)\le\exp\set{\frac{\lambda^2m^2}{2(1-\theta)^2}}
\cdot2^{\frac1\alpha-1}\left(\frac{(b-a)c^{1/\beta}}{2\left(1-\frac\alpha\beta\right)^{1/\alpha}(\theta m)^{1/\beta}}+1\right).
\end{equation}
Now \eqref{eq:a2} follows from \eqref{eq:a3} if we put $\alpha=\frac\beta2$.
 \end{corollary}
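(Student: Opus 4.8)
The plan is to invoke Corollary~\ref{cor:1} with a well-chosen admissible function $r$ and then to make the quantity $\widetilde I_r(\theta m)$ fully explicit under the power-type assumption $\sigma(h)=ch^\beta$. Since the bound in Corollary~\ref{cor:1} involves $r$ both through $\widetilde I_r$ and through its generalized inverse $r^{(-1)}$, I would look for an $r$ for which both are elementary. The power function $r(x)=x^\alpha-1$, $x\ge1$, with a parameter $\alpha\in(0,\beta)$ to be fixed later, is the natural candidate: it is non-negative and nondecreasing, and $r(e^y)=e^{\alpha y}-1$ is convex in $y\ge0$, so conditions $(i)$ and $(ii)$ hold and the corollary applies.

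The next step is a direct computation. Since $\sigma$ is strictly increasing with $\sigma(h)=ch^\beta$, its inverse is $\sigma^{(-1)}(s)=(s/c)^{1/\beta}$, so that $\frac{b-a}{2\sigma^{(-1)}(s)}=\frac{(b-a)c^{1/\beta}}{2s^{1/\beta}}$. Substituting into $\widetilde I_r$ and using the subadditivity inequality $(x+1)^\alpha-1\le x^\alpha$, valid for $x\ge0$ and $0<\alpha\le1$, I reduce the integrand to a constant multiple of the pure power $s^{-\alpha/\beta}$. The integral $\int_0^{\theta m}s^{-\alpha/\beta}\,ds$ then evaluates to $(\theta m)^{1-\alpha/\beta}/(1-\alpha/\beta)$, giving a closed form for $\widetilde I_r(\theta m)$.

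Feeding this into $A_3(\lambda,\theta)$ from \eqref{eq:a1} requires only $r^{(-1)}(t)=(t+1)^{1/\alpha}$; after the division by $\theta m$ inside the argument I obtain a factor of the shape $\bigl(K(\theta m)^{-\alpha/\beta}+1\bigr)^{1/\alpha}$, and I would linearize this power of a sum by the elementary inequality $(a+b)^p\le2^{p-1}(a^p+b^p)$ with $p=1/\alpha\ge1$. Specializing the free parameter to $\alpha=\beta/2$, so that $1/\alpha=2/\beta$ and $1-\alpha/\beta=\tfrac12$, and collecting the explicit powers of $2$ then produces exactly \eqref{eq:a2}.

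The one genuinely delicate point is the choice of $\alpha$, and it is dictated by integrability rather than by optimization. As $s\downarrow0$ the inverse $\sigma^{(-1)}(s)\downarrow0$, so the argument of $r$ in $\widetilde I_r$ blows up; the integrand behaves like $s^{-\alpha/\beta}$ near the origin, which is integrable on $(0,\theta m)$ precisely when $\alpha/\beta<1$. Thus $r$ must grow strictly more slowly than the critical power, which is exactly the constraint $0<\alpha<\beta$; the concrete value $\alpha=\beta/2$ is merely a convenient symmetric choice yielding the clean constants, and any admissible $\alpha$ would give a valid (if less tidy) bound.
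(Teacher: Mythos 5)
Your proposal is correct and follows exactly the paper's own argument: the same choice $r(x)=x^\alpha-1$ with $0<\alpha<\beta$, the same bound $(x+1)^\alpha-1\le x^\alpha$ to evaluate $\widetilde I_r(\theta m)$, the same use of $(a+b)^p\le 2^{p-1}(a^p+b^p)$ with $p=1/\alpha$, and the same final specialization $\alpha=\beta/2$. Your closing remark about integrability dictating the constraint $\alpha<\beta$ is a nice clarification of why the parameter range is what it is, but the proof itself is identical to the paper's.
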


 \subsection{ Exponential maximal upper  bound for the weighted Gaussian process defined on the half-axis}
 Now, let
$X=\set{X(t),t\ge0}$
be a centered Gaussian process and
$a(t)>0$ be a continuous strictly increasing function such that
$a(t)\to\infty$ as $t\to\infty$.
Introduce the sequence $b_0=0$, $b_{k+1}>b_k$, $b_k\to\infty$ as $k\to\infty$. Denote
$a_k=a(b_k)$ and $m_k=m([b_k,b_{k+1}])=\sup_{t\in[b_k,b_{k+1}]}\left(\E\abs{X(t)}^2\right)^{1/2}.$ Our goal is to get exponential maximal upper  bound for the weighted Gaussian process $\frac{ {X(t)}}{a(t)}$, applying the above results, in particular, Corollary \ref{cor:2}.
\begin{theorem}\label{th:2}
Let the following conditions hold:
\begin{itemize}

\item [$(i)$]
There exist $c_k>0$ and $0<\beta<1$ such that
\begin{gather*}
\sup_{\substack{t,s\in[b_k,b_{k+1}]\\\abs{t-s}\le h}}
\left(\E\abs{X(t)-X(s)}^2\right)^{1/2}\le c_kh^\beta;
\end{gather*}

\item [$(ii)$]\begin{equation}\label{eq:sum1}
0<m_k<\infty \;\textit{and}\; A=\sum_{k=0}^\infty\frac{m_k}{a_k}<\infty;
\end{equation}
\item [$(iii)$] There exists $0<\gamma\le1$ such that
\begin{equation}\label{eq:sum2}
\sum_{k=0}^\infty\frac{ m_k^{1-\gamma/\beta}(b_{k+1}-b_k)^\gamma c_k^{\gamma/\beta}}{a_k}<\infty.
\end{equation}
\end{itemize}
Then for  any $\theta\in(0,1)$ and any  $\lambda>0$
\[
I(\lambda)=\E\exp\set{\lambda\sup_{t>0}\frac{\abs{X(t)}}{a(t)}}
\le2^{\frac2\beta-1}\exp\set{\frac{\lambda^2A^2}{2(1-\theta)^2}} A_4(\theta,\gamma),
\]
where
\[
A_4(\theta,\gamma)=
\exp\set{\frac{1}{\gamma A}\left(\sum_{k=0}^\infty\frac{m_k^{1-\gamma/\beta}}{a_k}(b_{k+1}-b_k)^\gamma c_k^{\gamma/\beta}\right)\left(\frac{2^{2/\beta-1}}{\theta^{1/\beta}}\right)^\gamma}.
\]
\end{theorem}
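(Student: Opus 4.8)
The plan is to localize the weighted supremum to the intervals $[b_k,b_{k+1}]$, apply the single-interval bound of Corollary~\ref{cor:2} on each of them, and then glue the pieces together by Hölder's inequality with weights tuned to the summability condition $(ii)$.

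First I would observe that, since $a$ is increasing with $a(t)\ge a(b_k)=a_k$ for $t\in[b_k,b_{k+1}]$, and since the intervals $[b_k,b_{k+1}]$, $k\ge0$, cover $[0,\infty)$,
\[
\sup_{t>0}\frac{\abs{X(t)}}{a(t)}
\le \sup_{k\ge0}\frac{1}{a_k}\sup_{t\in[b_k,b_{k+1}]}\abs{X(t)}
\le \sum_{k=0}^\infty\frac{\xi_k}{a_k},
\qquad \xi_k:=\sup_{t\in[b_k,b_{k+1}]}\abs{X(t)},
\]
the last step being the crude but harmless bound $\sup\le\sum$ for non-negative terms. Exponentiating turns this sum into a product, which is exactly what Hölder's inequality handles.

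Next I would choose weights $p_k>0$ with $\sum_k p_k^{-1}=1$ by setting $p_k^{-1}=\frac{m_k/a_k}{A}$; this is legitimate precisely because $A=\sum_k m_k/a_k<\infty$ by $(ii)$. Hölder's inequality then gives
\[
I(\lambda)\le\E\exp\set{\lambda\sum_{k}\frac{\xi_k}{a_k}}
\le\prod_{k=0}^\infty\left(\E\exp\set{\frac{\lambda p_k}{a_k}\,\xi_k}\right)^{1/p_k}.
\]
On each interval I apply Corollary~\ref{cor:2} with $[a,b]=[b_k,b_{k+1}]$, constant $c=c_k$ (condition $(i)$), exponent $\beta$, and $m=m_k$, and with $\lambda$ there replaced by $\lambda p_k/a_k$. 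The crucial point of the weight choice is that $\frac{p_k m_k}{a_k}=A$, so that the Gaussian exponential factor from Corollary~\ref{cor:2} becomes $\exp\{\frac{\lambda^2A^2}{2(1-\theta)^2}\cdot\frac1{p_k}\}$; multiplying over $k$ collapses it, via $\sum_k p_k^{-1}=1$, to exactly $\exp\{\frac{\lambda^2A^2}{2(1-\theta)^2}\}$, and the constants $2^{2/\beta-1}$ likewise combine to a single factor $2^{2/\beta-1}$.

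It remains to control the product of the polynomial prefactors $\prod_k(1+D_k)^{1/p_k}$ with $D_k=\frac{2^{2/\beta-1}(b_{k+1}-b_k)c_k^{1/\beta}}{(\theta m_k)^{1/\beta}}$. Taking logarithms and using the elementary inequality $\log(1+x)\le x^\gamma/\gamma$, valid for $x\ge0$ and $0<\gamma\le1$, bounds $\sum_k p_k^{-1}\log(1+D_k)$ by $\frac1\gamma\sum_k p_k^{-1}D_k^\gamma$; substituting $p_k^{-1}=\frac{m_k}{Aa_k}$ and the explicit form of $D_k$ produces precisely $\log A_4(\theta,\gamma)$, whose finiteness is guaranteed by the summability condition $(iii)$. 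Combining the three factors yields the stated bound. The main obstacle I expect is not any single estimate but rather (a) isolating and verifying the inequality $\log(1+x)\le x^\gamma/\gamma$, which is what converts the $1/\beta$ exponents into the $\gamma/\beta$ exponents appearing in $(iii)$, and (b) justifying Hölder's inequality for an infinite product together with the separability/measurability needed to pass from $\sup_{t>0}$ to the countable reduction; both are technical rather than deep.
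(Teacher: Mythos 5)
Your proposal is correct and follows essentially the same route as the paper's own proof: decompose $[0,\infty)$ into the intervals $[b_k,b_{k+1}]$, bound the weighted supremum by the sum $\sum_k \xi_k/a_k$, apply Hölder's inequality with exactly the weights $r_k=Aa_k/m_k$, invoke Corollary~\ref{cor:2} on each interval, and convert the prefactors via $\log(1+x)\le x^\gamma/\gamma$ (the paper's inequality \eqref{eq:log}). The only cosmetic difference is that you replace $a(t)$ by $a_k$ before applying Hölder rather than inside each factor, which changes nothing.
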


\begin{proof}
Let $r_k>0$, $k=0,1,2\ldots$ and
$\sum_{k=0}^\infty\frac{1}{r_k}=1$.
Then for  any   $\lambda>0$
\[
I(\lambda)
\le\E\exp\set{\lambda\sum_{k=0}^\infty\sup_{t\in[b_k,b_{k+1}]}\frac{\abs{X(t)}}{a(t)}}
\le\prod_{k=0}^\infty\left(\E\exp\set{\lambda r_k\sup_{t\in[b_k,b_{k+1}]}\frac{\abs{X(t)}}{a(t)}}\right)^{\frac{1}{r_k}}.
\]
It follows from Corollary~\ref{cor:2} that for any $\theta\in(0,1)$
\begin{multline*}
\E\exp\set{\lambda r_k\sup_{t\in[b_k,b_{k+1}]}\frac{\abs{X(t)}}{a(t)}}
\le\E\exp\set{\lambda r_k\sup_{t\in[b_k,b_{k+1}]}\frac{\abs{X(t)}}{a_k}}\\
\le2^{\frac2\beta-1}\exp\set{\frac{\lambda^2r_k^2m_k^2}{2(1-\theta)^2a_k^2}}
\left(1+\frac{b_{k+1}-b_k}{\theta^{1/\beta}}2^{2/\beta-1}
\left(\frac{c_k}{m_k}\right)^{1/\beta}\right).
\end{multline*}
Therefore,
\begin{align*}
I(\lambda)&\le2^{\frac2\beta-1}\exp\set{\frac{\lambda^2}{2(1-\theta)^2}\sum_{k=0}^\infty\frac{r_km_k^2}{a_k^2}}
\prod_{k=0}^\infty\left[1+\frac{b_{k+1}-b_k}{\theta^{1/\beta}}2^{2/\beta-1}
\left(\frac{c_k}{m_k}\right)^{1/\beta}\right]^\frac{1}{r_k}\\
&=2^{\frac2\beta-1}\exp\set{\frac{\lambda^2}{2(1-\theta)^2}\sum_{k=0}^\infty\frac{r_km_k^2}{a_k^2}}\\
&\quad\times
\exp\set{\sum_{k=0}^\infty\frac{1}{r_k}\log\left[1+\frac{b_{k+1}-b_k}{\theta^{1/\beta}}2^{2/\beta-1}
\left(\frac{c_k}{m_k}\right)^{1/\beta}\right]}.
\end{align*}
Recall the elementary inequality: for $0<\gamma\le1$ and $x\ge0$,
\begin{equation}\label{eq:log}
\log(1+x)=\frac1\gamma\log(1+x)^\gamma
\le\frac{x^\gamma}{\gamma}.
\end{equation}
Taking this into account, we continue with the upper bound for $I(\lambda)$:
\begin{multline*}
I(\lambda)\le2^{\frac2\beta-1}\exp\set{\frac{\lambda^2}{2(1-\theta)^2}\sum_{k=0}^\infty\frac{r_km_k^2}{a_k^2}}\\
\times\exp\set{\frac1\gamma\sum_{k=0}^\infty\frac{1}{r_k}\left(\frac{b_{k+1}-b_k}{\theta^{1/\beta}}2^{2/\beta-1}
\left(\frac{c_k}{m_k}\right)^{1/\beta}\right)^\gamma}.
\end{multline*}
Let $r_k=\frac{A a_k  }{m_k}$.
Then we get immediately the claimed upper bound:
\begin{multline*}
I(\lambda)\le2^{\frac2\beta-1}\exp\set{\frac{\lambda^2A^2}{2(1-\theta)^2}}
\exp\set{\frac1{\gamma A}\sum_{k=0}^\infty \frac{m_k}{a_k}\left(\frac{b_{k+1}-b_k}{\theta^{1/\beta}}2^{2/\beta-1}
\left(\frac{c_k}{m_k}\right)^{1/\beta}\right)^\gamma}\\
=2^{\frac2\beta-1}\exp\set{\frac{\lambda^2A^2}{2(1-\theta)^2}}
\exp\set{\frac1{\gamma A}\left(\sum_{k=0}^\infty \frac{m_k^{1-\gamma/\beta}}{a_k}(b_{k+1}-b_k)^\gamma c_k^{\gamma/\beta}\right)
\left(\frac{2^{2/\beta-1}}{\theta^{1/\beta}}\right)^\gamma}.
\qedhere
\end{multline*}
\end{proof}

\begin{corollary}\label{cor:3}
Let the assumptions of Theorem~\ref{th:2} hold.
Then for any    $\theta\in(0,1)$ and any  $u>0$  the following inequality holds:
\begin{equation}\label{eq:a4}
\P\set{\sup_{t>0}\frac{\abs{X(t)}}{a(t)}>u}
\le2^{\frac2\beta-1}\exp\set{-\frac{u^2(1-\theta)^2}{2A^2}}A_4(\theta,\gamma).
\end{equation}
Indeed, from Chebyshev's inequality we get that
\begin{equation}\label{eq:a5}
\begin{split}
\P\set{\sup_{t>0}\frac{\abs{X(t)}}{a(t)}>u}
&\le\frac{\E\exp\set{\lambda\sup_{t>0}\frac{\abs{X(t)}}{a(t)}}}{\exp\set{\lambda u}}\\
&\le2^{\frac2\beta-1}\exp\set{\frac{\lambda^2A^2}{2(1-\theta)^2}-\lambda u}A_4(\theta,\gamma).
\end{split}
\end{equation}
The inequality \eqref{eq:a4} follows from \eqref{eq:a5} if we put  $\lambda=\frac{u(1-\theta)^2}{A^2}$.
 \end{corollary}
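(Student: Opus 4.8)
The plan is to derive \eqref{eq:a4} as a standard Chernoff-type (exponential Markov) consequence of the maximal bound already established in Theorem~\ref{th:2}. First I would note that $\sup_{t>0}\abs{X(t)}/a(t)$ is a nonnegative random variable, so that applying Markov's inequality to the increasing function $x\mapsto e^{\lambda x}$ gives, for every $\lambda>0$,
\[
\P\set{\sup_{t>0}\frac{\abs{X(t)}}{a(t)}>u}\le e^{-\lambda u}\,\E\exp\set{\lambda\sup_{t>0}\frac{\abs{X(t)}}{a(t)}}=e^{-\lambda u}I(\lambda).
\]
Since the three hypotheses of Theorem~\ref{th:2} are assumed here, I may insert its conclusion $I(\lambda)\le 2^{2/\beta-1}\exp\set{\lambda^2A^2/(2(1-\theta)^2)}A_4(\theta,\gamma)$ directly, which yields
\[
\P\set{\sup_{t>0}\frac{\abs{X(t)}}{a(t)}>u}\le 2^{2/\beta-1}A_4(\theta,\gamma)\exp\set{\frac{\lambda^2A^2}{2(1-\theta)^2}-\lambda u}.
\]

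The remaining step is to optimize over the free parameter $\lambda>0$. The exponent $\frac{\lambda^2A^2}{2(1-\theta)^2}-\lambda u$ is a quadratic in $\lambda$ with positive leading coefficient, so it is minimized at $\lambda^{*}=u(1-\theta)^2/A^2$, which is strictly positive because $u>0$ and $0<\theta<1$; in particular $\lambda^{*}$ is admissible. Substituting $\lambda^{*}$ collapses the exponent to $-u^2(1-\theta)^2/(2A^2)$, giving exactly the claimed bound \eqref{eq:a4}.

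I expect no genuine obstacle here: the argument runs entirely parallel to the passage from \eqref{eq:2} to \eqref{eq:3} in Theorem~\ref{th:1}$(ii)$, the only bookkeeping being to carry the $\lambda$-independent constants $2^{2/\beta-1}$ and $A_4(\theta,\gamma)$ unchanged through the minimization. The sole point worth verifying is the positivity of the minimizing $\lambda^{*}$, which holds throughout the stated parameter range $\theta\in(0,1)$, $u>0$.
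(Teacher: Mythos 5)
Your proposal is correct and follows essentially the same route as the paper: apply the exponential Chebyshev (Markov) inequality, insert the moment bound from Theorem~\ref{th:2}, and minimize the exponent $\frac{\lambda^2A^2}{2(1-\theta)^2}-\lambda u$ at $\lambda=\frac{u(1-\theta)^2}{A^2}$, yielding \eqref{eq:a4}. The only addition is your explicit check that the optimal $\lambda$ is positive, which the paper leaves implicit.
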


\begin{corollary}\label{cor:4}
Let the assumptions of Theorem~\ref{th:2} hold.
Then for any $u>A$ we can get the following bound:
\begin{equation}\label{eq:a6}
\P\set{\sup_{t>0}\frac{\abs{X(t)}}{a(t)}>u}
\le2^{\frac2\beta-1}\sqrt{e}\exp\set{-\frac{u^2}{2A^2}}
A_4\left(1-\sqrt{1-\frac{A^2}{u^2}},\gamma\right).
\end{equation}
 Indeed, the inequality \eqref{eq:a6} follows from \eqref{eq:a4} if we put $\theta=1-\sqrt{1-\frac{A^2}{u^2}}$.
\end{corollary}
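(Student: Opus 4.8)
The plan is to derive this bound directly from Corollary~\ref{cor:3}, whose estimate \eqref{eq:a4} holds for \emph{every} $\theta\in(0,1)$. The idea is to feed into that inequality a specific, $u$-dependent choice of $\theta$ that collapses the factor $\exp\set{-\frac{u^2(1-\theta)^2}{2A^2}}$ into the clean leading term $\exp\set{-\frac{u^2}{2A^2}}$ (up to a universal constant), rather than keeping $\theta$ free. Concretely, I would set
\[
\theta=1-\sqrt{1-\frac{A^2}{u^2}}.
\]

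The first step is to check admissibility of this choice. Since we are given $u>A>0$, we have $0<\frac{A^2}{u^2}<1$, hence $0<\sqrt{1-\frac{A^2}{u^2}}<1$, and therefore $\theta=1-\sqrt{1-\frac{A^2}{u^2}}\in(0,1)$, so Corollary~\ref{cor:3} indeed applies. The second step is the algebraic simplification of the exponent. With this $\theta$ we have $1-\theta=\sqrt{1-\frac{A^2}{u^2}}$, so $(1-\theta)^2=1-\frac{A^2}{u^2}$, and consequently
\[
-\frac{u^2(1-\theta)^2}{2A^2}
=-\frac{u^2-A^2}{2A^2}
=-\frac{u^2}{2A^2}+\frac12.
\]
Exponentiating gives $\exp\set{-\frac{u^2(1-\theta)^2}{2A^2}}=\sqrt{e}\,\exp\set{-\frac{u^2}{2A^2}}$, which is exactly the factor $\sqrt{e}\,\exp\set{-\frac{u^2}{2A^2}}$ appearing in \eqref{eq:a6}. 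The remaining factor $A_4(\theta,\gamma)$ becomes $A_4\left(1-\sqrt{1-\frac{A^2}{u^2}},\gamma\right)$ by definition, and substituting everything into \eqref{eq:a4} yields the stated inequality.

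There is essentially no analytic obstacle here: once the substitution is made, the proof reduces to the one-line computation above. The only genuine content is the \emph{choice} of $\theta$, which is engineered so that the cross term in $(1-\theta)^2$ cancels the $A^2/u^2$ contribution and isolates the Gaussian-tail exponent $-u^2/(2A^2)$; the price of this cancellation is the harmless multiplicative constant $\sqrt{e}$. I would note in passing that for large $u$ this $\theta\approx \frac{A^2}{2u^2}\to 0$, so the bound is asymptotically sharp in the exponent, which is the point of recording this corollary as a separate, ready-to-use tail estimate.
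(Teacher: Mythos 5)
Your proof is correct and follows exactly the paper's route: substituting $\theta=1-\sqrt{1-\frac{A^2}{u^2}}$ into the bound \eqref{eq:a4} of Corollary~\ref{cor:3}, so that $(1-\theta)^2=1-\frac{A^2}{u^2}$ and the exponent splits into $-\frac{u^2}{2A^2}+\frac12$, producing the $\sqrt{e}$ factor. The only difference is that you spell out the admissibility check $\theta\in(0,1)$ and the one-line computation, which the paper leaves implicit.
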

\begin{corollary}\label{cor:5}
Let the assumptions of Theorem~\ref{th:2} hold.
Then for all $t>0$ we have with probability 1 the following bound,
\[
\abs{X(t)}\le a(t)\xi,
\]
where $\xi$ is   non-negative   random variable whose   distribution has  the tail admitting the following   upper bound:   for any $u>A$
\[
\P\set{\xi>u}\le 2^{\frac2\beta-1}\sqrt{e}\exp\set{-\frac{u^2}{2A^2}}
A_4\left(1-\sqrt{1-\frac{A^2}{u^2}},\gamma\right).
\]
\end{corollary}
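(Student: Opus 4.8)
The plan is to realize the conclusion by an explicit choice of the single random variable $\xi$, after which the statement becomes essentially a repackaging of Corollary~\ref{cor:4}. Concretely, I would set
\[
\xi:=\sup_{t>0}\frac{\abs{X(t)}}{a(t)},
\]
and verify that this $\xi$ has all three required features: it is a non-negative random variable, it is finite with probability $1$, and its tail obeys the stated bound. Since a single quantity works for every $t$ at once, the ``for all $t>0$ with probability $1$'' is automatic.

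First I would check that $\xi$ is a legitimate non-negative random variable. Non-negativity is clear, as $\xi$ is a supremum of non-negative quantities. Measurability follows from the separability framework of Section~\ref{sec:Gauss}: the H\"older condition $(i)$ of Theorem~\ref{th:2} renders $X$ continuous, hence separable, so the supremum over $t>0$ coincides with a supremum over a countable dense set and is measurable. Almost-sure finiteness I would read off directly from Theorem~\ref{th:2}: for any $\lambda>0$ and any $\theta\in(0,1)$,
\[
\E\exp\set{\lambda\xi}=\E\exp\set{\lambda\sup_{t>0}\frac{\abs{X(t)}}{a(t)}}
\le2^{\frac2\beta-1}\exp\set{\frac{\lambda^2A^2}{2(1-\theta)^2}}A_4(\theta,\gamma)<\infty,
\]
so $\xi<\infty$ almost surely. (One could equally deduce finiteness from the fact that the Corollary~\ref{cor:4} bound on $\P\set{\xi>u}$ tends to $0$ as $u\to\infty$.)

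Next I would establish the pointwise inequality. On the full-probability event $\set{\xi<\infty}$, the very definition of the supremum gives, for every $t>0$,
\[
\frac{\abs{X(t)}}{a(t)}\le\sup_{s>0}\frac{\abs{X(s)}}{a(s)}=\xi,
\]
and, since $a(t)>0$, this is exactly $\abs{X(t)}\le a(t)\xi$; because $\xi$ does not depend on $t$, the bound holds simultaneously for all $t>0$ with probability $1$. Finally, the tail estimate is immediate from the identity $\set{\xi>u}=\set{\sup_{t>0}\frac{\abs{X(t)}}{a(t)}>u}$ together with Corollary~\ref{cor:4}, valid for every $u>A$.

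There is no serious obstacle here: the content of the corollary lies entirely in the earlier estimates, and the present statement merely names the maximal weighted oscillation $\xi$ and records its tail. The only point deserving a sentence of care is the almost-sure finiteness of $\xi$, which is precisely what guarantees that it is a genuine (finite-valued) random variable rather than a possibly infinite quantity; this I would settle via the finite exponential moment supplied by Theorem~\ref{th:2}.
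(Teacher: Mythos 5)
Your proposal is correct and coincides with the argument the paper intends: Corollary~\ref{cor:5} is stated without proof precisely because, upon setting $\xi:=\sup_{t>0}\abs{X(t)}/a(t)$, the pointwise bound $\abs{X(t)}\le a(t)\xi$ is definitional and the tail estimate is Corollary~\ref{cor:4} verbatim. Your additional remarks on measurability (via separability) and almost-sure finiteness (via the exponential moment from Theorem~\ref{th:2}) are sound and simply make explicit what the paper leaves implicit.
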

\subsection{ Exponential maximal upper  bound for Gaussian process in the bounded strip on the plane}
Additionally, we need in exponential maximal upper  bound for Gaussian process defined in the bounded strip on the plane. We get it, applying Theorem \ref{th:1}.  So, let $0\le a<b<\infty$, $\Delta>0$,
\begin{gather*}
\T_{a,b,\Delta}=\set{\ve{t}=(t_1,t_2)\in\R_+^2:a\le t_1\le b,t_1-\Delta\le t_2\le t_1},\\ d(\ve{t},\ve{s})=\max\set{\abs{t_1-s_1},\abs{t_2-s_2}}\;\text{for}\;  \ve{t},\ve{s}\in\T_{a,b,\Delta}.
\end{gather*}
Let $r(x)$, $x\ge1$, be a non-negative nondecreasing function such that
$r\left(e^y\right)$, $y\ge0$ is a convex function.

\begin{theorem}\label{th:3}

Assume that
$X=\set{X(\ve t),\ve t\in\T_{a,b,\Delta}}$
is a centered separable Gaussian process satisfying following conditions:

\begin{itemize}

\item [$(iv)$]\begin{gather*}
m(\T_{a,b,\Delta})=\sup_{\ve t\in\T_{a,b,\Delta}}\left(\E(X(\ve t))^2\right)^{1/2}<\infty;\end{gather*}
\item [$(v)$]\begin{gather*}\sup_{\substack{d(\ve t,\ve s)\le h\\\ve t,\ve s\in\T_{a,b,\Delta}}}
\left(\E(X(\ve t)-X(\ve s))^2\right)^{1/2}\le\sigma(h),
\end{gather*}
where
$\sigma=\set{\sigma(h),h>0}$
is an increasing continuous function,
$\sigma(h)\ge0$, $\sigma(0)=0$.
\end{itemize}

If
\[
\widehat I_r(m(\T_{a,b,\Delta}))=\int_{0}^{m(\T_{a,b,\Delta})} r\left(\frac{(b-a)\Delta}{4}\left(\frac{1}{\sigma^{(-1)}(v)}+d\right)^2\right)dv<\infty,
\]
where
$d=\max\set{\frac{2}{b-a},\frac4\Delta}$,
then for    any    $\theta\in(0,1)$ and any  $\lambda>0$
\[
\E\exp\set{\lambda\sup_{\ve t\in\T_{a,b,\Delta}}\abs{X(\ve t)}}
\le2  A_5(\lambda,\theta),
\]
where
\[
 A_5(\lambda,\theta)=\exp\set{\frac{\lambda^2m^2(\T_{a,b,\Delta})}{2(1-\theta)^2}}
r^{(-1)}\left(\frac{\widehat I_r(\theta m(\T_{a,b,\Delta}))}{\theta m(\T_{a,b,\Delta})}\right).
\]
\end{theorem}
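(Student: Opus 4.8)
The plan is to deduce the statement directly from Theorem~\ref{th:1} applied to the parameter set $\T_{a,b,\Delta}$ equipped with the pseudometric $\rho_X$. Condition $(A\ref{A1})$ is exactly assumption $(iv)$, and $(A\ref{A2})$ follows from the assumed separability of $X$ together with the separability of $(\T_{a,b,\Delta},\rho_X)$, which is inherited from that of $(\T_{a,b,\Delta},d)$ since $\rho_X(\ve t,\ve s)\le\sigma(d(\ve t,\ve s))$ by $(v)$. Once these hypotheses are verified, the conclusion will follow by substituting an appropriate bound for the metric massiveness $N(v)$ of $(\T_{a,b,\Delta},\rho_X)$ into $A_1(\lambda,\theta)$, so that $I_r$ turns into $\widehat I_r$ and $A_1$ into $A_5$. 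Thus the whole matter reduces to estimating $N(v)$, in the same way that Corollary~\ref{cor:1} reduced the one-dimensional case to the bound $N(v)\le\frac{b-a}{2\sigma^{(-1)}(v)}+1$.

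First I would pass from $\rho_X$ to the explicit metric $d$. By condition $(v)$, whenever $d(\ve t,\ve s)\le\sigma^{(-1)}(v)$ one has $\rho_X(\ve t,\ve s)\le\sigma(\sigma^{(-1)}(v))=v$, so every $d$-ball of radius $\sigma^{(-1)}(v)$ is contained in a $\rho_X$-ball of radius $v$. Consequently
\[
N(v)\le N_d(\sigma^{(-1)}(v)),
\]
where $N_d(\delta)$ denotes the minimal number of $d$-balls of radius $\delta$, that is, axis-parallel squares of side $2\delta$, needed to cover $\T_{a,b,\Delta}$. It then remains only to count these squares.

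This covering count is the one genuinely geometric, and hence the most delicate, step, since $\T_{a,b,\Delta}$ is a skew strip rather than a rectangle. I would slice $[a,b]$ into $\lceil(b-a)/(2\delta)\rceil\le\frac{b-a}{2\delta}+1$ subintervals of length at most $2\delta$. Over a single slice $t_1\in[c,c+2\delta]$ the admissible values of $t_2$ fill the interval $[c-\Delta,c+2\delta]$ of length $\Delta+2\delta$ — the extra $2\delta$ coming precisely from the shear of the strip — so this slice is covered by at most $\lceil(\Delta+2\delta)/(2\delta)\rceil\le\frac{\Delta}{2\delta}+2$ squares. Multiplying,
\[
N_d(\delta)\le\Big(\frac{b-a}{2\delta}+1\Big)\Big(\frac{\Delta}{2\delta}+2\Big)
=\frac{(b-a)\Delta}{4}\Big(\frac1\delta+\frac{2}{b-a}\Big)\Big(\frac1\delta+\frac4\Delta\Big).
\]
Bounding both $\frac{2}{b-a}$ and $\frac4\Delta$ by $d=\max\set{\frac{2}{b-a},\frac4\Delta}$ collapses the product into the perfect square $\frac{(b-a)\Delta}{4}(\frac1\delta+d)^2$. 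Setting $\delta=\sigma^{(-1)}(v)$ gives
\[
N(v)\le\frac{(b-a)\Delta}{4}\Big(\frac{1}{\sigma^{(-1)}(v)}+d\Big)^2,
\]
which is exactly the integrand defining $\widehat I_r$. Inserting this estimate of $N$ into Theorem~\ref{th:1}$(i)$ replaces $I_r(\theta m(\T_{a,b,\Delta}))$ by $\widehat I_r(\theta m(\T_{a,b,\Delta}))$ and produces the asserted bound with $A_5$ in place of $A_1$. The main obstacle I anticipate is getting the slicing constants exactly right: the factor $2$ in $\frac{\Delta}{2\delta}+2$, as opposed to the $+1$ seen in the one-dimensional Corollary~\ref{cor:1}, is forced by the parallelogram geometry, and it is precisely what lets the two linear factors recombine into a single square through the choice of $d$.
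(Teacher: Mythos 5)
Your proposal is correct and follows essentially the same route as the paper: the paper's proof likewise reduces Theorem~\ref{th:3} to Theorem~\ref{th:1} via the covering-number bound $N(v)\le\bigl(\frac{b-a}{2\sigma^{(-1)}(v)}+1\bigr)\bigl(\frac{\Delta}{2\sigma^{(-1)}(v)}+2\bigr)\le\frac{(b-a)\Delta}{4}\bigl(\frac{1}{\sigma^{(-1)}(v)}+d\bigr)^2$, which is exactly the estimate you derive. Your write-up merely makes explicit two points the paper leaves implicit — the verification of $(A\ref{A1})$--$(A\ref{A2})$ and the slicing argument behind the count, including the factor $+2$ forced by the shear of the strip.
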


\begin{proof}
The statement follows from Theorem~\ref{th:1}, since in this case
\begin{align*}
N(v)&\le\left(\frac{b-a}{2\sigma^{(-1)}(v)}+1\right)\left(\frac{\Delta+2\sigma^{(-1)}(v)}{2\sigma^{(-1)}(v)}+1\right)\\
&=\left(\frac{b-a}{2\sigma^{(-1)}(v)}+1\right)\left(\frac{\Delta}{2\sigma^{(-1)}(v)}+2\right)\\
&=\frac{b-a}{2}\cdot\frac{\Delta}{2}\left(\frac{1}{\sigma^{(-1)}(v)}+\frac{2}{b-a}\right)\left(\frac{1}{\sigma^{(-1)}(v)}+\frac{4}{\Delta}\right)\\
&\le\frac{(b-a)\Delta}{4}\left(\frac{1}{\sigma^{(-1)}(v)}+d\right)^2.
\qedhere
\end{align*}
\end{proof}

\begin{corollary}\label{cor:b1}
Let in Theorem~\ref{th:3} $\sigma(h)=ch^\beta$ for some $c>0$ and $\beta\in(0,1]$, $m=m(\T_{a,b,\Delta})$.
Then for all $\lambda>0$, $0<\eps<\beta$, $0<\theta<1$
\begin{multline*}
\E\exp\set{\lambda\sup_{\ve t\in\T_{a,b,\Delta}}\abs{X(\ve t)}}
\le2^{\frac2\eps-2}(b-a)\Delta\exp\set{\frac{\lambda^2m^2}{2(1-\theta)^2}}\\
\times\left(\frac{c^{2/\beta}}{\left(1-\frac{\eps}{\beta}\right)^{2/\eps}(\theta m)^{2/\beta}}+d^2\right),
\end{multline*}
where
$d=\max\set{\frac{2}{b-a},\frac4\Delta}$.
Indeed, we can choose
\[
r(s)=
\begin{cases}
0, & 1\le s\le\frac{d^2(b-a)\Delta}{4},\\
s^{\eps/2}-\left(\frac{d^2(b-a)\Delta}{4}\right)^{\eps/2}, & s>\frac{d^2(b-a)\Delta}{4}.
\end{cases}
\]
Then
$r^{(-1)}(t)=\left(t+\left(\frac{d^2(b-a)\Delta}{4}\right)^{\eps/2}\right)^{\frac2\eps}$, and
\begin{align*}
\widehat I_r(\theta m)
&=\int_{0}^{\theta m} r\left(\frac{(b-a)\Delta}{4}\left(\frac{c^{1/\beta}}{v^{1/\beta}}+d\right)^2\right)dv\\
&=\int_{0}^{\theta m} \left(\left(\frac{(b-a)\Delta}{4}\left(\frac{c^{1/\beta}}{v^{1/\beta}}+d\right)^2\right)^{\eps/2}-\left(\frac{d^2(b-a)\Delta}{4}\right)^{\eps/2}\right) dv\\
&=\left(\frac{(b-a)\Delta}{4}\right)^{\frac\eps2}\int_{0}^{\theta m} \left(\left(\frac{c^{1/\beta}}{v^{1/\beta}}+d\right)^{\eps}-d^\eps\right) dv\\
&\le\left(\frac{(b-a)\Delta}{4}\right)^{\frac\eps2}\int_{0}^{\theta m}\frac{c^{\eps/\beta}}{v^{\eps/\beta}} dv
=\left(\frac{(b-a)\Delta}{4}\right)^{\frac\eps2}\frac{c^{\eps/\beta}(\theta m)^{1-\eps/\beta}}{1-\frac{\eps}{\beta}}.
\end{align*}
Hence,
\begin{align*}
A_5(\lambda,\theta)
&=\exp\set{\frac{\lambda^2m^2}{2(1-\theta)^2}}
\left(\frac{\widehat I_r(\theta m)}{\theta m}+\left(\frac{d^2(b-a)\Delta}{4}\right)^{\frac\eps2}\right)^{\frac2\eps}\\
&\le\exp\set{\frac{\lambda^2m^2}{2(1-\theta)^2}}\frac{(b-a)\Delta}{4}
\left(\frac{c^{\eps/\beta}}{\left(1-\frac\eps\beta\right)(\theta m)^{\eps/\beta}}+d^\eps\right)^{\frac2\eps}\\
&\le\exp\set{\frac{\lambda^2m^2}{2(1-\theta)^2}}\frac{(b-a)\Delta}{4}
2^{\frac2\eps-1}\left(\frac{c^{2/\beta}}{\left(1-\frac\eps\beta\right)^{2/\eps}(\theta m)^{2/\beta}}+d^2\right).
\end{align*}
\end{corollary}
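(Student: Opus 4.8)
The plan is to apply Theorem~\ref{th:3} with a carefully chosen admissible function $r$, exploiting the fact that under $\sigma(h)=ch^\beta$ the inverse is explicit, namely $\sigma^{(-1)}(v)=(v/c)^{1/\beta}$, so that $\frac{1}{\sigma^{(-1)}(v)}=c^{1/\beta}v^{-1/\beta}$ and the integrand defining $\widehat I_r$ becomes $r\bigl(\frac{(b-a)\Delta}{4}(c^{1/\beta}v^{-1/\beta}+d)^2\bigr)$. As $v\downarrow0$ the argument of $r$ blows up like $v^{-2/\beta}$, so to keep $\widehat I_r(\theta m)$ finite I must choose $r$ growing no faster than a power $x^{\eps/2}$ with $\eps<\beta$; this is exactly the source of the hypothesis $0<\eps<\beta$.

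Concretely, I would take $r$ to vanish up to the threshold $\frac{d^2(b-a)\Delta}{4}$, which is the infimum over $v>0$ of the argument of $r$ (attained as $v\to\infty$), and to equal $s^{\eps/2}-\bigl(\frac{d^2(b-a)\Delta}{4}\bigr)^{\eps/2}$ beyond it. First I would verify that this $r$ is admissible for Theorem~\ref{th:3}: it is non-negative and nondecreasing, and $r(e^y)$ is convex because $e^{y\eps/2}$ is convex in $y$ and truncating a convex function at zero from below preserves convexity. The payoff of the threshold is that the constant $\bigl(\frac{d^2(b-a)\Delta}{4}\bigr)^{\eps/2}$ subtracted in $r$ cancels the $d^\eps$ contribution coming from the $+d$ inside the square, which cleans up the integral; its generalized inverse is $r^{(-1)}(t)=\bigl(t+(\frac{d^2(b-a)\Delta}{4})^{\eps/2}\bigr)^{2/\eps}$.

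Next I would evaluate $\widehat I_r(\theta m)$. Factoring out $\bigl(\frac{(b-a)\Delta}{4}\bigr)^{\eps/2}$ reduces the integrand to $(c^{1/\beta}v^{-1/\beta}+d)^\eps-d^\eps$, which by the subadditivity inequality $(x+y)^\eps\le x^\eps+y^\eps$ valid for $0<\eps\le1$ is bounded by $c^{\eps/\beta}v^{-\eps/\beta}$. The remaining integral $\int_0^{\theta m}v^{-\eps/\beta}\,dv=\frac{(\theta m)^{1-\eps/\beta}}{1-\eps/\beta}$ converges precisely because $\eps<\beta$, which yields an explicit bound for $\widehat I_r(\theta m)$.

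Finally I would substitute this bound into $A_5(\lambda,\theta)$ from Theorem~\ref{th:3}. Dividing by $\theta m$ and applying $r^{(-1)}$ turns the relevant factor into $\bigl(\frac{\widehat I_r(\theta m)}{\theta m}+(\frac{d^2(b-a)\Delta}{4})^{\eps/2}\bigr)^{2/\eps}$; factoring $(\frac{(b-a)\Delta}{4})^{\eps/2}$ out of both summands (the first yielding $\frac{c^{\eps/\beta}}{(1-\eps/\beta)(\theta m)^{\eps/\beta}}$ and the added constant yielding $d^\eps$) and raising to the power $2/\eps$ pulls out a clean factor $\frac{(b-a)\Delta}{4}$ and leaves $\bigl(\frac{c^{\eps/\beta}}{(1-\eps/\beta)(\theta m)^{\eps/\beta}}+d^\eps\bigr)^{2/\eps}$. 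I would then apply the elementary inequality $(a+b)^p\le2^{p-1}(a^p+b^p)$ with $p=2/\eps$ to split the $c$-term from the $d^2$-term; gathering the resulting powers of $2$ together with the factor $2$ from Theorem~\ref{th:3} and the $\tfrac14$ from $\frac{(b-a)\Delta}{4}$ into $2^{2/\eps-2}$ produces exactly the stated bound. The \emph{main obstacle} is the design of $r$: it must simultaneously satisfy the convexity and monotonicity conditions of Theorem~\ref{th:3}, keep $\widehat I_r$ finite (which forces $\eps<\beta$), and admit both an explicit generalized inverse and an explicitly integrable composition, with the threshold placed precisely at the infimum of the argument so that the $d$-terms cancel; everything afterwards is routine bookkeeping with the two elementary inequalities.
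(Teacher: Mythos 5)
Your proposal is correct and follows essentially the same route as the paper: the identical truncated-power choice of $r$ with threshold $\frac{d^2(b-a)\Delta}{4}$, the same explicit inverse $r^{(-1)}$, the same subadditivity bound $\bigl(c^{1/\beta}v^{-1/\beta}+d\bigr)^\eps-d^\eps\le c^{\eps/\beta}v^{-\eps/\beta}$ yielding the integrable power $v^{-\eps/\beta}$ (finite precisely because $\eps<\beta$), and the same final application of $(a+b)^p\le2^{p-1}(a^p+b^p)$ with $p=2/\eps$ and constant bookkeeping giving $2^{2/\eps-2}$. Your added checks (admissibility of $r$, convexity of $r(e^y)$ via truncation at zero) are correct details the paper leaves implicit.
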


Now we get the upper bound for the weighted Gaussian process defined on the bounded strip on the plane, similarly to getting Theorem \ref{th:2} from Theorem \ref{th:1}.  Let
$\T_\Delta=\set{\ve{t}=(t_1,t_2)\in\R_+^2:t_1-\Delta\le t_2\le t_1}$, $\Delta>0$,
$$d(\ve t,\ve s) =  \max\set{|t_1 - s_1|, |t_2 -s_2|},\quad\ve s, \ve t \in \T_\Delta.$$
 Also, let  $b_l$ be an increasing sequence such that $ b_0 =0, b_{l+1} -b_l \geq 1$, $b_l \to \infty, l \to \infty$,
 and $a(t) >0$ is a continuous increasing function and denote  $a_l = a(b_l)$,
\begin{gather*}
{\T}_{b_l, b_{l+1},\Delta}  = \set{\ve{t}=(t_1,t_2)\in\R_+^2:b_l\le t_1\le b_{l+1},t_1-\Delta\le t_2\le t_1}.\\
\end{gather*}

\begin{theorem}\label{th:4}
Let $0<\Delta\le2(b_{l+1}-b_l)$, $l\ge0$, $ X=\{ X (\ve t), \ve t\in\T_\Delta\}$ be  a centered Gaussian    process satisfying following conditions:

\begin{itemize}

\item [$(vi)$]\label{(vi)} $\displaystyle m_l =m(\T_{b_l, b_{l+1},\Delta})= \sup_{ \ve{t} \in {\T}_{b_l, b_{l+1},\Delta}} \left(\E (X (\ve{t}) )^2\right)^{\frac{1}{2}}<\infty;$
\item [$(vii)$]\label{(vii)}  There exist $\beta\in(0,1]$ and constants $c_l>0$ such that
\[\sup_{\substack{ d(\ve{t}, \ve{s}) \leq h,\\ \ve{t}, \ve{s} \in {\T}_{b_l, b_{l+1},\Delta}}}
\left(\E ( X (\ve{t}) - X(\ve{s}))^2 \right)^{\frac{1}{2}} \leq c_l h^\beta.
\]
\item [$(viii)$]\label{(viii)}
$\displaystyle
A = \sum_{l=0}^\infty \frac{m_l}{a_l} < \infty, \quad
\sum_{l=0}^\infty\frac{m_l\log(b_{l+1}-b_l)}{a_l} <\infty$,
and for some $ \gamma\in(0,  1]$
$\displaystyle \sum_{l=0}^\infty\frac{m_l^{1-2\gamma/\beta}c_l^{2\gamma/\beta}}{a_l}<\infty$.
\end{itemize}
Then for  any    $\theta\in(0,1)$, $\eps\in(0,\beta)$ and $\lambda>0$
\begin{align*}
I(\lambda) &= \E \exp\set{ \lambda \; \sup_{ \ve{t} \in \T_\Delta}
\frac {\abs{X(\ve{t})}}{a(t_1)}}
 \leq\exp \set{ \frac{\lambda^2A^2}{2(1- \theta)^2}}
A_6(\theta,\gamma,\eps),
\end{align*}
where
\begin{multline*}
A_6(\theta,\gamma,\eps) = \frac{2^{\frac2\eps+2}}{\Delta}
\exp\set{\frac1A\sum_{l=0}^\infty\frac{m_l\log(b_{l+1}-b_l)}{a_l}}\\
\times\exp\set{\frac{\Delta^{2\gamma}}{\gamma A\left(1-\frac{\eps}{\beta}\right)^{2\gamma/\eps}\theta^{2\gamma/\beta}4^{2\gamma}}
\sum_{l=0}^\infty\frac{c_l^{2\gamma/\beta}m_l^{1-2\gamma/\beta}}{a_l}}.
\end{multline*}
\end{theorem}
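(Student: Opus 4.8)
The plan is to mimic the passage from Theorem~\ref{th:1} to Theorem~\ref{th:2}, but now feeding in the strip estimate of Corollary~\ref{cor:b1} in place of the interval estimate Corollary~\ref{cor:2}. First I would fix weights $r_l>0$ with $\sum_{l=0}^\infty r_l^{-1}=1$, decompose the supremum over $\T_\Delta$ into the finite strips $\T_{b_l,b_{l+1},\Delta}$, and use that $a$ is increasing so that $a(t_1)\ge a_l=a(b_l)$ on the $l$-th strip. Subadditivity of the supremum together with H\"older's inequality then yields
\[
I(\lambda)\le\prod_{l=0}^\infty\left(\E\exp\set{\lambda r_l\sup_{\ve t\in\T_{b_l,b_{l+1},\Delta}}\frac{\abs{X(\ve t)}}{a_l}}\right)^{1/r_l}.
\]

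Next I would apply Corollary~\ref{cor:b1} to the rescaled process $X/a_l$ on each strip, with exponent $\lambda r_l$, diffusion constant $c_l/a_l$, and $m$-value $m_l/a_l$. The key structural point is the standing hypothesis $0<\Delta\le2(b_{l+1}-b_l)$, which forces $d_l=\max\set{2/(b_{l+1}-b_l),4/\Delta}=4/\Delta$ uniformly in $l$; this is exactly what produces the clean $\Delta$-dependence in $A_6$. I would note that the factor $a_l$ cancels inside the ratio $(c_l/a_l)^{2/\beta}/(m_l/a_l)^{2/\beta}=(c_l/m_l)^{2/\beta}$, yet survives in the Gaussian exponent $\lambda^2r_l^2m_l^2/(2(1-\theta)^2a_l^2)$.

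The decisive bookkeeping step is to reorganize each Corollary~\ref{cor:b1} factor by pulling $16/\Delta^2$ out of the additive bracket, rewriting the factor as
\[
2^{2/\eps+2}\frac{b_{l+1}-b_l}{\Delta}\left(1+\frac{\Delta^2(c_l/m_l)^{2/\beta}}{16(1-\eps/\beta)^{2/\eps}\theta^{2/\beta}}\right).
\]
Taking logarithms and summing against $1/r_l$ then splits into three pieces: the $l$-independent constant $\log(2^{2/\eps+2}/\Delta)$ reproduces (using $\sum r_l^{-1}=1$) the prefactor of $A_6$; the term $\log(b_{l+1}-b_l)$ yields the first exponential sum; and the residual $\log(1+x_l)$ is handled by the elementary bound~\eqref{eq:log}, $\log(1+x)\le x^\gamma/\gamma$, producing (since $16^\gamma=4^{2\gamma}$) the third exponential sum. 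Finally, choosing $r_l=Aa_l/m_l$ makes $\sum r_l^{-1}=1$ hold by condition $(viii)$ and collapses the Gaussian exponent to $\lambda^2A^2/(2(1-\theta)^2)$, just as in Theorem~\ref{th:2}; the three convergence requirements in $(viii)$ are precisely what guarantee finiteness of the three resulting sums. I expect the main obstacle to be not conceptual but rather the careful constant-matching in this last step—in particular verifying that the $16/\Delta^2$ factorization lines up the powers of $2$, $\Delta$, $4^{2\gamma}$, and $\theta^{2\gamma/\beta}$ with the stated form of $A_6$.
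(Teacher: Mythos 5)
Your proposal is correct and follows essentially the same route as the paper's own proof: strip decomposition with H\"older weights $r_l$, application of Corollary~\ref{cor:b1} on each strip (where $\Delta\le2(b_{l+1}-b_l)$ indeed forces $d=4/\Delta$), factoring $16/\Delta^2$ out of the bracket, the $\log(1+x)\le x^\gamma/\gamma$ bound, and finally $r_l=Aa_l/m_l$. The constant bookkeeping you describe matches the stated $A_6(\theta,\gamma,\eps)$ exactly, so no gaps remain.
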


\begin{proof}
The theorem follows from Corollary \ref{cor:b1}.
Indeed, let $r_l >0$,
$\sum_{l=0}^\infty \frac{1}{r_l} =1$.
Then we easily get the following upper bounds
\begin{align*}
I(\lambda) &\leq \E \exp\set{ \lambda \; \sum_{l=0}^\infty \frac{1}{a_l} \;
\sup_{ \ve{t} \in {\T}_{b_l, b_{l+1},\Delta}} | X(\ve{t})|}\\
&\leq\prod_{l=0}^\infty \left(\E \exp \set{ \lambda \frac{r_l}{a_l} \;
\sup_{\ve{t} \in {\T}_{b_l, b_{l+1},\Delta}}\abs{X(\ve{t})}} \right)^{1/r_l}\\
&\leq
2 \prod_{l=0}^\infty \exp \set{
\frac{\lambda^2 m_l^2 r_l }{2 a_l^2 (1 - \theta)^2}}
 \left((b_{l+1}-b_l)\Delta2^{\frac2\eps-3}
 \left(\frac{c_l^{2/\beta}}{\left(1-\frac{\eps}{\beta}\right)^{2/\eps}(\theta m_l)^{2/\beta}}+\left(\frac4\Delta\right)^2\right)\right)^{1/r_l}
\\
&=2^{\frac2\eps} \prod_{l=0}^\infty \exp \set{
\frac{\lambda^2 m_l^2 r_l }{2 a_l^2 (1 - \theta)^2}}
(b_{l+1}-b_l)^{1/r_l}\left(\frac{4}{\Delta}\right)^{1/r_l}\\
&\quad\times
 \left(\frac{c_l^{2/\beta}}{\left(1-\frac{\eps}{\beta}\right)^{2/\eps}(\theta m_l)^{2/\beta}}\left(\frac{\Delta}{4}\right)^2+1\right)^{1/r_l}
\\
&=\frac{2^{\frac2\eps+2}}{\Delta}\exp \set{ \frac{\lambda^2}{2(1- \theta)^2}
 \sum_{l=0}^\infty \frac{r_l m_l^2}{a_l^2}}
\exp\set{ \sum_{l=0}^\infty\frac{\log(b_{l+1}-b_l)}{r_l}}\\
&\quad\times
\exp\set{\sum_{l=0}^\infty\frac{1}{r_l}\log\left(\frac{c_l^{2/\beta}\Delta^2}{\left(1-\frac{\eps}{\beta}\right)^{2/\eps}(\theta m_l)^{2/\beta}4^2}+1\right)}.
\end{align*}
Applying \eqref{eq:log}, we get
\begin{align*}
I(\lambda) &\leq
\frac{2^{\frac2\eps+2}}{\Delta}\exp \set{ \frac{\lambda^2}{2(1- \theta)^2}
 \sum_{l=0}^\infty \frac{r_l m_l^2}{a_l^2}}
\exp\set{ \sum_{l=0}^\infty\frac{\log(b_{l+1}-b_l)}{r_l}}\\
&\quad\times
\exp\set{\sum_{l=0}^\infty\frac{c_l^{2\gamma/\beta}\Delta^{2\gamma}}{\gamma r_l\left(1-\frac{\eps}{\beta}\right)^{2\gamma/\eps}(\theta m_l)^{2\gamma/\beta}4^{2\gamma}}}.
\end{align*}

Now we choose  $ r_l = \frac{A a_l}{m_l}$.
Then
\begin{align*}
I(\lambda) &\leq
\frac{2^{\frac2\eps+2}}{\Delta}\exp \set{ \frac{\lambda^2A^2}{2(1- \theta)^2}}
\exp\set{\frac1A\sum_{l=0}^\infty\frac{m_l\log(b_{l+1}-b_l)}{a_l}}\\
&\quad\times
\exp\set{\frac{\Delta^{2\gamma}}{\gamma A\left(1-\frac{\eps}{\beta}\right)^{2\gamma/\eps}\theta^{2\gamma/\beta}4^{2\gamma}}
\sum_{l=0}^\infty\frac{c_l^{2\gamma/\beta}m_l^{1-2\gamma/\beta}}{a_l}}.
 \qedhere
\end{align*}
\end{proof}

\begin{corollary}\label{cor:aI}
Let the assumptions of Theorem~\ref{th:4} hold.
Then for  all   $\theta\in(0,1)$, $\eps\in(0,\beta)$ and  $u>0$,
\[
\P \set{ \sup_{\ve{t} \in \T_\Delta} \frac{\abs{X(\ve{t})}}{a(t_1)}>u}
\leq \exp \set{ - \frac{u^2 (1- \theta)^2}{2 A^2}}A_6(\theta,\gamma,\eps).
\]
\end{corollary}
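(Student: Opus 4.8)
The plan is to derive this tail bound from the exponential moment estimate of Theorem~\ref{th:4} by the standard Chebyshev-and-optimize argument, exactly mirroring the passage from Theorem~\ref{th:2} to Corollary~\ref{cor:3}. This is a routine repackaging of the already-established exponential bound, so I expect no serious obstacle.

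First I would invoke the exponential form of Chebyshev's inequality: for any fixed $\lambda>0$,
\[
\P\set{\sup_{\ve{t}\in\T_\Delta}\frac{\abs{X(\ve{t})}}{a(t_1)}>u}
\le\exp\set{-\lambda u}\,\E\exp\set{\lambda\sup_{\ve{t}\in\T_\Delta}\frac{\abs{X(\ve{t})}}{a(t_1)}}
=\exp\set{-\lambda u}\,I(\lambda).
\]
Next I would insert the bound $I(\lambda)\le\exp\set{\frac{\lambda^2A^2}{2(1-\theta)^2}}A_6(\theta,\gamma,\eps)$ supplied by Theorem~\ref{th:4}, which holds for the very same ranges $\theta\in(0,1)$, $\eps\in(0,\beta)$ and all $\lambda>0$. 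Since the factor $A_6(\theta,\gamma,\eps)$ does not depend on $\lambda$, this yields
\[
\P\set{\sup_{\ve{t}\in\T_\Delta}\frac{\abs{X(\ve{t})}}{a(t_1)}>u}
\le\exp\set{\frac{\lambda^2A^2}{2(1-\theta)^2}-\lambda u}\,A_6(\theta,\gamma,\eps).
\]

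Finally, with $u>0$ and the parameters $\theta,\gamma,\eps$ held fixed, I would minimize the quadratic exponent $\frac{\lambda^2A^2}{2(1-\theta)^2}-\lambda u$ over $\lambda>0$. Differentiating, the minimizer is $\lambda^\ast=\frac{u(1-\theta)^2}{A^2}$, which is strictly positive whenever $u>0$ and $\theta\in(0,1)$, and substituting it back produces the minimal value $-\frac{u^2(1-\theta)^2}{2A^2}$. Plugging this into the displayed bound gives precisely the claimed inequality.

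The entire content has already been carried out in Theorem~\ref{th:4}; this corollary only converts the exponential moment estimate into a tail estimate. The only points deserving a line of verification are that $\lambda^\ast$ indeed falls in $(0,\infty)$ and that the inequality borrowed from Theorem~\ref{th:4} is applicable over the full stated parameter ranges, both of which are immediate.
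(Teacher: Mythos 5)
Your proof is correct and coincides with the paper's (implicit) argument: the paper proves this corollary exactly as it proves Corollary~\ref{cor:3}, namely by exponential Chebyshev applied to the moment bound of Theorem~\ref{th:4} followed by minimizing $\frac{\lambda^2A^2}{2(1-\theta)^2}-\lambda u$ at $\lambda=\frac{u(1-\theta)^2}{A^2}$. Nothing further is needed.
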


\begin{corollary}\label{cor:aII}
Let the assumptions of Theorem~\ref{th:4} hold.
Then for any $\eps\in(0,\beta)$ and
$u > A$  we have that
\[
\P\set{\sup_{\ve{t} \in \T_\Delta} \frac{|X(\ve{t})|}{a(t_1)}>u}
\le \sqrt{e} \exp\set{-\frac{u^2}{2 A^2}}
A_6\left( 1- \sqrt{1- \frac{A^2}{u^2}},\gamma,\eps\right).
\]
\end{corollary}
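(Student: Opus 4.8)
The plan is to obtain Corollary~\ref{cor:aII} as an immediate specialization of Corollary~\ref{cor:aI} through a single judicious choice of the free parameter $\theta$. Corollary~\ref{cor:aI} already furnishes, for every $\theta\in(0,1)$, the estimate
\[
\P \set{ \sup_{\ve{t} \in \T_\Delta} \frac{\abs{X(\ve{t})}}{a(t_1)}>u}
\leq \exp \set{ - \frac{u^2 (1- \theta)^2}{2 A^2}}A_6(\theta,\gamma,\eps),
\]
so the whole task reduces to inserting a value of $\theta$ that is both admissible and produces the advertised right-hand side.

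First I would verify admissibility. Under the hypothesis $u>A$ we have $0<A^2/u^2<1$, hence $0<\sqrt{1-A^2/u^2}<1$, and therefore
\[
\theta=1-\sqrt{1-\frac{A^2}{u^2}}\in(0,1).
\]
This is the sole place where the restriction $u>A$ is used, and it is exactly what legitimizes the substitution into the bound of Corollary~\ref{cor:aI}.

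Next I would carry out the substitution. With this $\theta$ one has $1-\theta=\sqrt{1-A^2/u^2}$, so $(1-\theta)^2=1-A^2/u^2$ and consequently
\[
-\frac{u^2(1-\theta)^2}{2A^2}=-\frac{u^2-A^2}{2A^2}=-\frac{u^2}{2A^2}+\frac12.
\]
This converts the Gaussian-type prefactor into $\sqrt{e}\,\exp\set{-u^2/(2A^2)}$, while the factor $A_6(\theta,\gamma,\eps)$ is merely read off at the chosen $\theta$, giving precisely $A_6\bigl(1-\sqrt{1-A^2/u^2},\gamma,\eps\bigr)$ and completing the proof.

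There is essentially no analytic obstacle here: the argument is a one-line substitution mirroring the passage from Corollary~\ref{cor:3} to Corollary~\ref{cor:4} in the half-axis setting. The only point deserving attention is confirming that the prescribed $\theta$ lies in $(0,1)$, which is precisely where $u>A$ enters. It is worth emphasizing that this particular $\theta$ is chosen not because it minimizes the right-hand side but because it renders the exponent exactly $-u^2/(2A^2)+\tfrac12$, yielding the clean Gaussian-type tail $\sqrt{e}\,\exp\set{-u^2/(2A^2)}$ that is convenient for the subsequent almost-sure growth estimates.
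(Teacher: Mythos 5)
Your proof is correct and matches the paper's approach exactly: the paper derives Corollary~\ref{cor:aII} from Corollary~\ref{cor:aI} by the same substitution $\theta=1-\sqrt{1-A^2/u^2}$ (just as it passes from Corollary~\ref{cor:3} to Corollary~\ref{cor:4} in the half-axis case), with the condition $u>A$ ensuring $\theta\in(0,1)$ and the resulting exponent $-u^2/(2A^2)+\tfrac12$ producing the factor $\sqrt{e}$.
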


\begin{corollary}\label{cor:aIII}
Let the assumptions of Theorem~\ref{th:4} hold.
Then  for all $\ve t\in\T_\Delta$
\[
\abs{X (\ve{t})}\leq a( t_1 ) \xi \quad\text{a.\,s.},
\]
where $\xi$ is such non-negative random variable  that
\[
\P \set{ \xi > u} \leq \exp\set{- \frac{u^2 (1- \theta)^2}{2A^2}}  A_6(\theta,\gamma,\eps),
\]
and for $u > a$
\[
\P \set{ \xi > u} \leq \sqrt{e} \exp
\set{ -\frac{u^2}{2 A^2}}
A_6\left( 1- \sqrt{1- \frac{A^2}{u^2}},\gamma,\eps\right).
\]
\end{corollary}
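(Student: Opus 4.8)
The plan is to exhibit a single random variable $\xi$ that simultaneously dominates all the weighted values $\abs{X(\ve t)}/a(t_1)$ and then to read off its tail from the preceding corollaries, exactly as Corollary~\ref{cor:5} was obtained from Corollaries~\ref{cor:3} and~\ref{cor:4} in the half-axis setting. Concretely, I would set
\[
\xi := \sup_{\ve t \in \T_\Delta}\frac{\abs{X(\ve t)}}{a(t_1)}.
\]
First I would check that this is a legitimate finite random variable: by Theorem~\ref{th:4}, for every $\lambda>0$ the exponential moment $\E\exp\set{\lambda\xi}=I(\lambda)$ is finite, so in particular $\P\set{\xi<\infty}=1$, and $\xi$ is non-negative since it is a supremum of quotients of $\abs{X(\ve t)}\ge0$ by the positive function $a(t_1)$.

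Next, the pointwise almost-sure bound is immediate from the definition of the supremum. Since $a(t_1)>0$ for every $\ve t\in\T_\Delta$, for each fixed $\ve t$ we have
\[
\abs{X(\ve t)}=a(t_1)\cdot\frac{\abs{X(\ve t)}}{a(t_1)}\le a(t_1)\sup_{\ve s\in\T_\Delta}\frac{\abs{X(\ve s)}}{a(s_1)}=a(t_1)\,\xi .
\]
On the full-measure event $\set{\xi<\infty}$ this inequality holds deterministically and \emph{simultaneously} for all $\ve t\in\T_\Delta$, which is precisely the asserted a.s.\ statement; the fact that the single event already works for every $\ve t$ is built into the choice of $\xi$ as a supremum over the whole parameter set, so no extra measurability or uniformization argument is needed.

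Finally, the two tail estimates are nothing more than restatements of the earlier results applied to this $\xi$. The bound $\P\set{\xi>u}\le\exp\set{-u^2(1-\theta)^2/(2A^2)}\,A_6(\theta,\gamma,\eps)$ is exactly Corollary~\ref{cor:aI}, while the second, sharper bound for $u>A$ is Corollary~\ref{cor:aII}, obtained there by the optimizing substitution $\theta=1-\sqrt{1-A^2/u^2}$. I expect no genuine difficulty in this argument, since the corollary merely repackages Corollaries~\ref{cor:aI} and~\ref{cor:aII}; the only point deserving a word of care is the finiteness of $\xi$, which is what makes the almost-sure domination meaningful, and this is guaranteed by the finiteness of $I(\lambda)$ established in Theorem~\ref{th:4}.
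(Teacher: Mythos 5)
Your proof is correct and follows exactly the argument the paper intends (and used implicitly, in parallel with Corollary~\ref{cor:5}): define $\xi=\sup_{\ve t\in\T_\Delta}\abs{X(\ve t)}/a(t_1)$, note its finiteness from the exponential moment bound of Theorem~\ref{th:4}, and read the two tail bounds directly from Corollaries~\ref{cor:aI} and~\ref{cor:aII}. The only remark worth adding is that the condition ``$u>a$'' in the statement is a typo for $u>A$, which you correctly interpreted.
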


\section{Asymptotic growth with probability 1 of  multifractional Brownian motion}\label{sec:mBm}
In this section we apply the results of Section  \ref{sec:Gauss} to get the asymptotic growth of the trajectories of multifractional Brownian motion.
\subsection{Definition and assumptions}
Let $H\colon\R_+\to(0,1)$ be a continuous function.

The (harmonizable) multifractional Brownian motion (mBm) with functional parameter $H$ was introduced in \cite{BenassiJaffardRoux97}.
It is defined by
\[
Y(t)=\int_\R\frac{e^{itu}-1}{\abs{u}^{H_t+1/2}}\Wt(du),
\quad t\ge0,
\]
where $\Wt(du)$ is the ``Fourier transform'' of the white noise $W(du)$, that is  a unique complex-valued random measure such that for all $f\in L^2(\R)$
\[
\int_\R f(u)W(du)=\int_\R \widehat f(u)\Wt(du)\quad\text{a.\,s.},
\]
see \cite{BenassiJaffardRoux97,StoevTaqqu}.

In what follows we assume that the function $H$ satisfies the following conditions:
\begin{enumerate}[(H1)]
\item\label{H1}
There exist constants $0<h_1<h_2<1$ such that for any $t\ge0$
\[
h_1\le H_t\le h_2.
\]
\item\label{H3}
There exist constants $D>0$ and $\kappa\in(0,1]$ such that for all $t\ge s>0$
\[
\abs{H_t-H_s}\le D\abs{t-s}^\kappa.
\]
\end{enumerate}

It is known~\cite{Ayache-covar}  that
\begin{equation}\label{eq:var-mBm}
\left(\E\abs{Y(t)}^2\right)^{1/2}=C(H_t)t^{H_t},
\end{equation}
where
$C(H)=\left(\frac{\pi}{H\Gamma(2H)\sin(\pi H)}\right)^{1/2}$.
Since the function $C(H)$ is bounded on $[h_1,h_2]$, we have under assumptions (H\ref{H1})--(H\ref{H3})
\begin{equation}\label{eq:variance}
\left(\E\abs{Y(t)}^2\right)^{1/2}\le K_1 t^{h_2},
\quad t\ge1,
\end{equation}
for some $K_1>0$.

\subsection{Upper bounds  for the incremental variances of mBm}
The first result gives the upper bound for the distance variance function for multifractional Brownian motion.
\begin{lemma}\label{l:1}
Under the assumption (H\ref{H1}),
there exists a constant $K_2>0$ such that for all $t\ge s\ge0$
\begin{equation} \label{bound1}
\E(Y(t)-Y(s))^2
\le K_2\abs{t-s}^{2H_t}+K_2(H_t-H_s)^2z^2(s),
\end{equation}
where
\[
z(s)=
\begin{cases}
s^{h_2}\left(\log^2s+1\right)^{1/2}, & s\ge1,\\
1, & 0<s<1;
\end{cases}
\]
\end{lemma}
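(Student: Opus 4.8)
The plan is to combine the harmonizable representation of $Y$ with the isometry property of the random measure $\Wt$, and then to split the kernel difference into a part responsible for the time increment and a part responsible for the increment of the Hurst index. By the isometry $\E\abs{\int_\R f(u)\,\Wt(du)}^2=\int_\R\abs{f(u)}^2\,du$ applied to the difference of the two kernels,
\[
\E(Y(t)-Y(s))^2=\int_\R\abs{\frac{e^{itu}-1}{\abs{u}^{H_t+1/2}}-\frac{e^{isu}-1}{\abs{u}^{H_s+1/2}}}^2du.
\]
Adding and subtracting $\frac{e^{isu}-1}{\abs{u}^{H_t+1/2}}$ rewrites the integrand's kernel as
\[
\frac{e^{itu}-e^{isu}}{\abs{u}^{H_t+1/2}}+(e^{isu}-1)\left(\abs{u}^{-H_t-1/2}-\abs{u}^{-H_s-1/2}\right),
\]
and the elementary inequality $\abs{a+b}^2\le2\abs{a}^2+2\abs{b}^2$ reduces everything to bounding the two resulting integrals $J_1$ and $J_2$ separately.

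The time part $J_1$ is immediate: since $\abs{e^{itu}-e^{isu}}=\abs{e^{i(t-s)u}-1}$, the variance formula \eqref{eq:var-mBm} gives $J_1=C(H_t)^2\abs{t-s}^{2H_t}$, and the boundedness of $C(\cdot)$ on $[h_1,h_2]$ produces the first term on the right-hand side of \eqref{bound1}. The substance of the proof is the Hurst part
\[
J_2=\int_\R\abs{e^{isu}-1}^2\left(\abs{u}^{-H_t-1/2}-\abs{u}^{-H_s-1/2}\right)^2du.
\]
Applying the mean value theorem to $h\mapsto\abs{u}^{-h}$ yields a $\xi$ between $H_s$ and $H_t$ with $\abs{u}^{-H_t}-\abs{u}^{-H_s}=-(H_t-H_s)\,\abs{u}^{-\xi}\log\abs{u}$, and since $\xi\in[h_1,h_2]$,
\[
\left(\abs{u}^{-H_t-1/2}-\abs{u}^{-H_s-1/2}\right)^2\le(H_t-H_s)^2\,\abs{u}^{-1}(\log\abs{u})^2\max\set{\abs{u}^{-2h_1},\abs{u}^{-2h_2}}.
\]
It then remains to show that $J_2\le(H_t-H_s)^2 G(s)$ with $G(s)\le K\,z^2(s)$, where
\[
G(s)=\int_\R\abs{e^{isu}-1}^2\,\abs{u}^{-1}(\log\abs{u})^2\max\set{\abs{u}^{-2h_1},\abs{u}^{-2h_2}}\,du.
\]

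I would estimate $G(s)$ by splitting $\R$ into $\set{\abs{u}\ge1}$ and $\set{\abs{u}<1}$. On $\set{\abs{u}\ge1}$ the maximum equals $\abs{u}^{-2h_1}$; bounding $\abs{e^{isu}-1}^2\le4$ leaves the $s$-independent integral $\int_{\abs{u}\ge1}(\log\abs{u})^2\abs{u}^{-2h_1-1}\,du$, which converges because $2h_1+1>1$ and is harmless since $z^2(s)\ge1$ for $s\ge1$. On $\set{\abs{u}<1}$ the maximum equals $\abs{u}^{-2h_2}$, and for $s\ge1$ I would split further at $\abs{u}=1/s$, using $\abs{e^{isu}-1}^2\le s^2u^2$ on $\set{\abs{u}<1/s}$ and $\abs{e^{isu}-1}^2\le4$ on $\set{1/s\le\abs{u}<1}$. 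The substitution $u=e^{-v}$ turns the two arising integrals $\int_0^{1/s}(\log u)^2u^{1-2h_2}\,du$ and $\int_{1/s}^1(\log u)^2u^{-2h_2-1}\,du$ into integrals of the form $\int v^2e^{cv}\,dv$, revealing their leading behaviour $s^{2h_2-2}\log^2 s$ and $s^{2h_2}\log^2 s$; after multiplication by $s^2$ and $1$ respectively, both are $O(s^{2h_2}(\log^2 s+1))=O(z^2(s))$. For $0<s<1$ the crude bounds $\abs{e^{isu}-1}^2\le u^2$ on $\set{\abs{u}<1}$ and $\abs{e^{isu}-1}^2\le4$ on $\set{\abs{u}\ge1}$ give $G(s)\le K=K\,z^2(s)$ at once, and the case $s=0$ is covered by $J_1$ alone. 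Collecting the bounds for $J_1$ and $J_2$ gives \eqref{bound1}.

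The step I expect to be the main obstacle is the sharp evaluation of the two integrals over $\set{\abs{u}<1}$ after the cut at $1/s$: one must track simultaneously the exact power $s^{2h_2}$ and the logarithmic factor $\log^2 s$ (the latter originating from $(\log\abs{u})^2\approx(\log s)^2$ near $\abs{u}=1/s$), since a cruder estimate such as $\abs{e^{isu}-1}^2\le s^2u^2$ used on all of $\set{\abs{u}<1}$ would only yield the order $s^2$, which exceeds $z^2(s)$ for large $s$. It is precisely the matching of the near-origin and intermediate regions that forces the weight $z^2(s)=s^{2h_2}(\log^2 s+1)$ in \eqref{bound1}.
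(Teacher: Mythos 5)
Your proposal is correct and follows essentially the same route as the paper's proof: the isometry property, the identical add-and-subtract decomposition of the kernel, and the mean value theorem applied to the Hurst exponent. Your direct splitting of the $u$-integral at $\abs{u}=1/s$ is the paper's scaling substitution $v=su$ in disguise (your cut corresponds to $\abs{v}=1$ there), and both executions produce the same weight $s^{2h_2}\left(\log^2 s+1\right)=z^2(s)$.
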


\begin{proof}
By the isometry property,
\begin{align*}
\E(Y(t)-Y(s))^2
&=\E\left(\int_\R\left(\frac{e^{itu}-1}{\abs{u}^{H_t+1/2}}
-\frac{e^{isu}-1}{\abs{u}^{H_s+1/2}}\right)\Wt(du)\right)^2\\
&=\int_\R\abs{\frac{e^{itu}-1}{\abs{u}^{H_t+1/2}}
-\frac{e^{isu}-1}{\abs{u}^{H_s+1/2}}}^2du\\
&=\int_\R\abs{\frac{e^{itu}-1}{\abs{u}^{H_t+1/2}}
-\frac{e^{isu}-1}{\abs{u}^{H_t+1/2}}
+\frac{e^{isu}-1}{\abs{u}^{H_t+1/2}}
-\frac{e^{isu}-1}{\abs{u}^{H_s+1/2}}}^2du\\
&\le2(I_1+I_2),
\end{align*}
where
\begin{align*}
I_1&=\int_\R\frac{\abs{e^{itu}-e^{isu}}^2}{\abs{u}^{2H_t+1}}du,\\
I_2&=\int_\R\abs{e^{isu}-1}^2\left(\abs{u}^{-H_t-1/2}-\abs{u}^{-H_s-1/2}\right)^2du.
\end{align*}
Consider now $I_1$ and $I_2$ separately. For $I_1$ we get the following bound
\[
I_1=\int_\R\frac{\abs{e^{i(t-s)u}-1}^2}{\abs{u}^{2H_t+1}}du
=\abs{t-s}^{2H_t}\int_\R\frac{\abs{e^{iv}-1}^2}{\abs{v}^{2H_t+1}}dv
=\abs{t-s}^{2H_t}\int_\R\frac{4\sin^2\frac v2}{\abs{v}^{2H_t+1}}dv.
\]
Hence,
$I_1\le C_1\abs{t-s}^{2H_t}$,
where
\begin{align*}
C_1&=\int_{\abs{v}<1}\frac{4\sin^2\frac v2}{\abs{v}^{2h_2  +1}}dv
+\int_{\abs{v}>1}\frac{4\sin^2\frac v2}{\abs{v}^{2h_1+1}}dv\\
&\le\int_{\abs{v}<1}\frac{1}{\abs{v}^{2h_2-1}}dv
+\int_{\abs{v}>1}\frac{4}{\abs{v}^{2h_1+1}}dv
<\infty.
\end{align*}

Consider $I_2$. By the mean value theorem,
\[
\abs{u}^{-H_t-1/2}-\abs{u}^{-H_s-1/2}
=-\abs{u}^{-h(u)-1/2}\log\abs{u}(H_t-H_s),
\]
where $h(u)\in[h_1,h_2]$.
Therefore,
\begin{align*}
I_2&=(H_t-H_s)^2\int_\R\abs{e^{isu}-1}^2\abs{u}^{-2h(u)-1}\log^2\abs{u}du\\
&=(H_t-H_s)^2\int_\R\frac{\abs{e^{iv}-1}^2}{\abs{v}^{2h(v/s)+1}}
s^{2h(v/s)}(\log\abs{v}-\log s)^2dv\\
&\le
\begin{cases}
s^{2h_1}(H_t-H_s)^2I_3,  & 0\le s<1,\\
s^{2h_2}(H_t-H_s)^2I_3, & s\ge1,
\end{cases}
\end{align*}
where
\begin{align*}
I_3&=\int_\R\frac{\abs{e^{iv}-1}^2}{\abs{v}^{2h(v/s)+1}}
(\log\abs{v}-\log s)^2dv\\
&\le2\left(\int_\R\frac{4\sin^2\frac v2}{\abs{v}^{2h(v/s)+1}}
\log^2\abs{v}dv+\log^2s\int_\R\frac{4\sin^2\frac v2}{\abs{v}^{2h(v/s)+1}}dv\right)\\
&\le2\left(C_2+C_1\log^2s\right),\\
C_2&=\int_{\abs{v}<1}\frac{4\sin^2\frac v2}{\abs{v}^{2h_2+1}}\log^2\abs{v}dv
+\int_{\abs{v}>1}\frac{4\sin^2\frac v2}{\abs{v}^{2h_1+1}}\log^2\abs{v}dv
<\infty.
\end{align*}
Note that $f(s)=s^{2h_1}\left(C_2+C_1\log^2s\right)$, $s>0$,
is a continuous function and $f(s)\to0$ as $s\downarrow0$.
Therefore, $f$ is bounded on $[0,1]$, whence
$I_2\le C_3(H_t-H_s)^2z^2(s)$
for some $C_3>0$.
\end{proof}

\begin{rem}\label{remark1}
\begin{itemize}
\item[$(a)$] Denote $h_3=\min\set{h_1,\kappa}$. It immediately follows from \eqref{bound1}, from the fact that multifractional process is Gaussian, and from the Kolmogorov theorem that under conditions $(H1)$ and $(H2)$ process $Y$ with probability 1 has H\"{o}lder trajectories up to order $h_3$  on any finite interval.
    \item[$(b)$] Bound \eqref{bound1} is inconvenient in the sense that it contains two different exponents of $|t-s|$ and therefore one should every time relate corresponding terms  depending upon the value of $|t-s|$. To avoid this technical difficulty, we establish   the next result. Denote   $h_4=\max\set{h_2,\kappa}$, $h_5=h_4-h_3$.
    \end{itemize}
\end{rem}

\begin{lemma}\label{l:2}
Assume that the Hurst function $H$ satisfies the conditions (H\ref{H1}) and (H\ref{H3}).
Let $a,b\in\R_+$, $b-a\ge1$.
Then
\begin{enumerate}[(a)]
\item\label{l2-a}
for all $t,s\in[a,b]$ such that $\abs{t-s}\le1$,
\[
\left(\E(Y(t)-Y(s))^2\right)^{1/2}
\le K_3\abs{t-s}^{h_3}z(b),
\]
where $K_3=K_2^{1/2}\left(1+D^2\right)^{1/2}$.

\item\label{l2-b}
for all $t,s\in[a,b]$
\[
\left(\E(Y(t)-Y(s))^2\right)^{1/2}
\le K_3\abs{t-s}^{h_3}(b-a)^{h_5}z(b);
\]

\item\label{l2-c}
for all $t_1,t_2,s_1,s_2\in[a,b]$
\begin{multline*}
\left(\E\left(Y(t_1)-Y(t_2)-Y(s_1)+Y(s_2)\right)^2\right)^{1/2}\\
\le 2K_3 \max \left( \abs{t_1 -s_1}, \abs{t_2 -s_2 }\right)^{h_3}(b -a)^{h_5}z(b).
\end{multline*}
\end{enumerate}

\end{lemma}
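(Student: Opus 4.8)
The plan is to derive all three parts directly from Lemma~\ref{l:1}, whose whole purpose (as flagged in Remark~\ref{remark1}(b)) is to be repackaged so that the two competing powers of $\abs{t-s}$ collapse to a single exponent. Throughout I may assume $t\ge s$ by symmetry of $\E(Y(t)-Y(s))^2$ in its arguments: when $t<s$ one simply applies Lemma~\ref{l:1} with the roles of $t$ and $s$ exchanged, which only replaces $H_t$ by $H_s\in[h_1,h_2]$ and $z(s)$ by $z(t)\le z(b)$, so every estimate below is unaffected. I will also use repeatedly that $z$ is continuous and nondecreasing with $z(1)=1$, hence $z(s)\le z(b)$ for $s\le b$ and $z(b)\ge1$ since $b\ge a+1\ge1$.

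For part~\eqref{l2-a}, I start from the bound of Lemma~\ref{l:1} and replace $(H_t-H_s)^2$ by $D^2\abs{t-s}^{2\kappa}$ using (H\ref{H3}). The key elementary observation is that for a base $\abs{t-s}\le1$ the map $\alpha\mapsto\abs{t-s}^\alpha$ is nonincreasing; since $h_3=\min\set{h_1,\kappa}\le H_t$ and $h_3\le\kappa$, both $\abs{t-s}^{2H_t}$ and $\abs{t-s}^{2\kappa}$ are dominated by $\abs{t-s}^{2h_3}$. Bounding $z(s)\le z(b)$ and absorbing the first term into the common factor $z^2(b)\ge1$ yields $\E(Y(t)-Y(s))^2\le K_2(1+D^2)\abs{t-s}^{2h_3}z^2(b)$; taking square roots gives exactly the claim with $K_3=K_2^{1/2}(1+D^2)^{1/2}$.

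For part~\eqref{l2-b} I split on the size of $\abs{t-s}$. When $\abs{t-s}\le1$, part~\eqref{l2-a} already gives the bound, and since $b-a\ge1$ and $h_5\ge0$ we have $(b-a)^{h_5}\ge1$, so the extra factor is inserted for free. When $\abs{t-s}>1$ the monotonicity of $\alpha\mapsto\abs{t-s}^\alpha$ reverses: now $h_4=\max\set{h_2,\kappa}\ge H_t$ and $h_4\ge\kappa$ give $\abs{t-s}^{2H_t},\abs{t-s}^{2\kappa}\le\abs{t-s}^{2h_4}=\abs{t-s}^{2h_3}\abs{t-s}^{2h_5}$, and since $t,s\in[a,b]$ we have $\abs{t-s}\le b-a$, so $\abs{t-s}^{2h_5}\le(b-a)^{2h_5}$. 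The same consolidation as in~\eqref{l2-a}, again using $z(s)\le z(b)$ and $z(b)\ge1$, produces $K_2(1+D^2)\abs{t-s}^{2h_3}(b-a)^{2h_5}z^2(b)$, whence the square root gives part~\eqref{l2-b}.

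Finally, part~\eqref{l2-c} is immediate from part~\eqref{l2-b} via the triangle inequality in $L^2(\Omega)$: writing $Y(t_1)-Y(t_2)-Y(s_1)+Y(s_2)=(Y(t_1)-Y(s_1))-(Y(t_2)-Y(s_2))$ and applying Minkowski's inequality bounds the left-hand side by $\left(\E(Y(t_1)-Y(s_1))^2\right)^{1/2}+\left(\E(Y(t_2)-Y(s_2))^2\right)^{1/2}$; estimating each summand by part~\eqref{l2-b} and replacing $\abs{t_1-s_1}$ and $\abs{t_2-s_2}$ by their maximum gives the factor $2K_3$. The only real work lies in the bookkeeping of exponents in \eqref{l2-a}--\eqref{l2-b}; there is no analytic difficulty beyond keeping track of whether the base $\abs{t-s}$ lies below or above $1$, which is precisely what determines the direction of the inequality between $\abs{t-s}^\alpha$ and the various choices of $\alpha$.
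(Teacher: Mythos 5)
Your proof is correct and follows essentially the same route as the paper: combine Lemma~\ref{l:1} with (H\ref{H3}), use the monotonicity of $x\mapsto x^{\alpha}$ in $\alpha$ (in the two regimes $x\le1$, $x\ge1$) to collapse the competing exponents to $2h_3$, bound $1+D^2z^2(s)\le(1+D^2)z^2(b)$, and deduce part~\eqref{l2-c} from part~\eqref{l2-b} by Minkowski's inequality. The only cosmetic difference is in part~\eqref{l2-b}, where the paper treats all $t,s$ at once by rescaling, writing $\abs{t-s}^{2H_t}=(b-a)^{2H_t}\abs{\frac{t-s}{b-a}}^{2H_t}$ and then adjusting exponents on the factors $\abs{\frac{t-s}{b-a}}\le1$ and $b-a\ge1$, whereas you split into the cases $\abs{t-s}\le1$ and $\abs{t-s}>1$; both arguments rest on the same observation and give the identical constant.
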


\begin{proof}
It follows   from the  assumptions  (H\ref{H1}), (H\ref{H3}) and Lemma~\ref{l:1} that
\begin{equation}\label{bound2}
\E(Y(t)-Y(s))^2
\le K_2\abs{t-s}^{2H_t}+K_2D^2\abs{t-s}^{2\kappa}z^2(s).
\end{equation}

\eqref{l2-a}
In the case $\abs{t-s}\le1$, we have that
\begin{align*}
\E(Y(t)-Y(s))^2
&\le K_2\abs{t-s}^{2h_3}+K_2D^2\abs{t-s}^{2h_3}z^2(s)\\
&= K_2\abs{t-s}^{2h_3}\left(1+D^2z^2(s)\right)
\le K_3^2\abs{t-s}^{2h_3}z^2(b).
\end{align*}

\eqref{l2-b}
For arbitrary values of arguments, inequality \eqref{bound2} implies that
\begin{align*}
\E(Y(t)&-Y(s))^2\\
&\le K_2(b-a)^{2H_t}\abs{\frac{t-s}{b-a}}^{2H_t}+K_2D^2(b-a)^{2\kappa}\abs{\frac{t-s}{b-a}}^{2\kappa}z^2(s)\\
&\le K_2(b-a)^{2h_4}\abs{\frac{t-s}{b-a}}^{2h_3}(1+D^2z^2(s))\\
&\le K_3^2(b-a)^{2h_4-2h_3}\abs{t-s}^{2h_3}z^2(s).
\end{align*}

\eqref{l2-c}
Proof follows immediately from the part \eqref{l2-b}, since by Minkowski's inequality,
\begin{multline*}
\left(\E\left(Y(t_1)-Y(t_2)-Y(s_1)+Y(s_2)\right)^2\right)^{1/2}\\
\leq \left(\E (Y(t_1) -Y(s_1) )^2 \right)^{1/2} +
 \left(\E (Y(t_2) - Y(s_2) )^2 \right)^{1/2}.\qedhere
\end{multline*}

\end{proof}

\subsection{Asymptotic growth  of  the trajectories of mBm with probability 1}
Now we apply the results of Section \ref{sec:Gauss} to multifractional Brownian motion. The first result gives the maximal exponential bound for the weighted mBm. In order to get it, introduce the following notations. Let $b_k$, $k\ge0$, be  a sequence such that $b_0 =0$, $b_{k+1} -b_k \geq 1,$
and let
$a(t)>0$ be an increasing continuous function such that $a(t)\to\infty$ as $t\to\infty$,
 $a_k = a(b_k)$.

\begin{theorem}\label{th:6}
Let the Hurst function $H$ satisfy the conditions (H\ref{H1}) and (H\ref{H3}).
Assume that there exists $0<\gamma\le1$ such that
\begin{equation}\label{eq:cond-mBm}
\sum_{k=0}^\infty\frac{b_{k+1}^{h_2+\frac{\gamma h_4}{h_3}} \left(\log^2b_{k+1}+1\right)^{\frac{\gamma}{2h_3}}}{a_k}<\infty.
\end{equation}
Then for all  $0<\theta <1$, $u >0$
\begin{equation}\label{th6-1}
\P\set{\sup_{t>0}\frac{\abs{Y(t)}}{a(t)}>u}
\le2^{\frac{2}{h_3}-1}\exp\set{-\frac{u^2(1-\theta)^2}{2{A}^2}}A_7(\theta,\gamma),
\end{equation}
where
$A=K_1\sum_{k=0}^\infty\frac{b_{k+1}^{h_2}}{a_k}$,
\[
A_7(\theta,\gamma)=
\exp\set{\frac{K_1^{1-\frac{\gamma}{h_3}}K_3^{\frac{\gamma}{h_3}}}{\gamma {A}}
\sum_{k=0}^\infty\frac{b_{k+1}^{h_2+\frac{\gamma h_4}{h_3}} \left(\log^2b_{k+1}+1\right)^{\frac{\gamma}{2h_3}}}{a_k}
\left(\frac{2^{\frac{2}{h_3}-1}}{\theta^{\frac{1}{h_3}}}\right)^\gamma},
\]
or for all $u > {A}$
\begin{equation}\label{th6-2}
\P\set{\sup_{t>0}\frac{\abs{Y(t)}}{a(t)}>u}
\le2^{\frac{2}{h_3}-1}\sqrt{e}\exp\set{-\frac{u^2}{2{A}^2}}
A_7\left(1-\sqrt{1-\frac{{A}^2}{u^2}},\gamma\right).
\end{equation}
\end{theorem}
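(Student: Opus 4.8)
The plan is to specialize Theorem~\ref{th:2} to the case where $X=Y$ is multifractional Brownian motion, with the base space being the half-axis $\R_+$ and $\beta=h_3$. The strategy is entirely one of verifying the three hypotheses $(i)$, $(ii)$, $(iii)$ of Theorem~\ref{th:2} with concrete values built from the bounds established in Section~\ref{sec:mBm}, and then reading off the conclusion via Corollary~\ref{cor:3} and Corollary~\ref{cor:4}. Throughout I would take $\beta=h_3$, which is the H\"older exponent appearing in Lemma~\ref{l:2}\eqref{l2-b}.

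First I would identify the constants $m_k$ and $c_k$ for the intervals $[b_k,b_{k+1}]$. For $m_k=m([b_k,b_{k+1}])$, the variance bound \eqref{eq:variance} gives $m_k\le K_1 b_{k+1}^{h_2}$ (using that $t^{h_2}$ is increasing and $t\ge1$ on these intervals, since $b_{k+1}-b_k\ge1$ forces $b_{k+1}\ge1$), so that $A=\sum_k m_k/a_k\le K_1\sum_k b_{k+1}^{h_2}/a_k$, matching the definition of $A$ in the statement. For the incremental bound required by hypothesis $(i)$, Lemma~\ref{l:2}\eqref{l2-b} applied on $[a,b]=[b_k,b_{k+1}]$ yields
\[
\left(\E(Y(t)-Y(s))^2\right)^{1/2}\le K_3\abs{t-s}^{h_3}(b_{k+1}-b_k)^{h_5}z(b_{k+1}),
\]
so for $\abs{t-s}\le h$ this is bounded by $c_k h^{h_3}$ with
\[
c_k=K_3(b_{k+1}-b_k)^{h_5}z(b_{k+1}).
\]
This identifies $\beta=h_3$ and gives an explicit $c_k$ in terms of $z(b_{k+1})=b_{k+1}^{h_2}(\log^2 b_{k+1}+1)^{1/2}$.

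Next I would feed these into hypothesis $(iii)$ and check that the abstract convergence condition \eqref{eq:sum2} reduces exactly to the hypothesis \eqref{eq:cond-mBm} of the theorem. Substituting $m_k\le K_1 b_{k+1}^{h_2}$ and the above $c_k$ into the general term $m_k^{1-\gamma/\beta}(b_{k+1}-b_k)^\gamma c_k^{\gamma/\beta}/a_k$ with $\beta=h_3$, the powers of $b_{k+1}$ combine: the factor $m_k^{1-\gamma/h_3}$ contributes $b_{k+1}^{h_2(1-\gamma/h_3)}$, while $c_k^{\gamma/h_3}$ contributes a factor $b_{k+1}^{h_5\gamma/h_3}\cdot b_{k+1}^{h_2\gamma/h_3}(\log^2 b_{k+1}+1)^{\gamma/(2h_3)}$ up to the constant $K_3^{\gamma/h_3}$; collecting the exponent of $b_{k+1}$ using $h_5=h_4-h_3$ gives precisely $h_2+\gamma h_4/h_3$, together with the logarithmic factor $(\log^2 b_{k+1}+1)^{\gamma/(2h_3)}$. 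Thus \eqref{eq:sum2} is equivalent to \eqref{eq:cond-mBm}, and this equivalence also explains the exact shape of the constant $A_7(\theta,\gamma)$: it is $A_4(\theta,\gamma)$ from Theorem~\ref{th:2} with the collected constants $K_1^{1-\gamma/h_3}K_3^{\gamma/h_3}$ and $\beta=h_3$ inserted, together with the $(b_{k+1}-b_k)^\gamma$ absorbed via the crude estimate $(b_{k+1}-b_k)^\gamma\le b_{k+1}^\gamma$ if one wishes, though matching the stated $A_7$ it is cleaner to keep the sum as written and recognize it as $A_4$. Having verified $(i)$, $(ii)$, $(iii)$, the bound \eqref{th6-1} is then exactly \eqref{eq:a4} of Corollary~\ref{cor:3}, and \eqref{th6-2} is exactly \eqref{eq:a6} of Corollary~\ref{cor:4}, so the proof concludes by direct citation.

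The only genuine obstacle is bookkeeping: one must make the exponent arithmetic for $b_{k+1}$ come out to the stated $h_2+\gamma h_4/h_3$ and confirm the constant collapses to $K_1^{1-\gamma/h_3}K_3^{\gamma/h_3}$, which requires care in tracking the $\gamma/h_3$ powers of both $m_k$ and $c_k$ and in using $h_4=h_3+h_5$. A minor point to state explicitly is that $z(b_{k+1})=b_{k+1}^{h_2}(\log^2 b_{k+1}+1)^{1/2}$ is valid because $b_{k+1}\ge1$, and that the bound $m_k\le K_1 b_{k+1}^{h_2}$ from \eqref{eq:variance} likewise requires $b_{k+1}\ge1$; both hold since $b_{k+1}-b_k\ge1$. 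No analytic difficulty remains once these substitutions are made.
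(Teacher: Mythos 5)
Your proposal follows exactly the paper's own proof: verify hypotheses $(i)$--$(iii)$ of Theorem~\ref{th:2} with $\beta=h_3$, $c_k=K_3(b_{k+1}-b_k)^{h_5}z(b_{k+1})$ from Lemma~\ref{l:2}\eqref{l2-b} and $m_k=K_1b_{k+1}^{h_2}$ from \eqref{eq:variance}, collapse the exponents via $h_5=h_4-h_3$ to see that \eqref{eq:sum2} becomes \eqref{eq:cond-mBm}, and then cite Corollaries~\ref{cor:3} and~\ref{cor:4} for \eqref{th6-1} and \eqref{th6-2}. The bookkeeping you describe (the exponent $h_2+\gamma h_4/h_3$, the constant $K_1^{1-\gamma/h_3}K_3^{\gamma/h_3}$, and the bound $(b_{k+1}-b_k)^{\gamma(1+h_5/h_3)}\le b_{k+1}^{\gamma h_4/h_3}$) is precisely what the paper does, so the proposal is correct and essentially identical in approach.
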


\begin{proof}
First, we check the conditions $(i)$--$(iii)$ of Theorem~\ref{th:2}.
Lemma~\ref{l:2}\eqref{l2-b} implies that the assumption $(i)$ holds with
$c_k=K_3(b_{k+1}-b_k)^{h_5}z(b_{k+1})$
and $\beta=h_3$.
According to \eqref{eq:variance}, we can choose
$m_k=K_1b_{k+1}^{h_2}$.
Then
\[
\sum_{k=0}^\infty\frac{m_k}{a_k}
=K_1\sum_{k=0}^\infty\frac{b_{k+1}^{h_2}}{a_k}<\infty,
\]
by~\eqref{eq:cond-mBm}, and the condition $(ii)$ is satisfied.
Finally, the condition $(iii)$ follows from~\eqref{eq:cond-mBm}, because in our case
\begin{align*}
 m_k^{1-\gamma/\beta}(b_{k+1}-b_k)^\gamma c_k^{\gamma/\beta}
&=K_1^{1-\frac{\gamma}{h_3}}K_3^{\frac{\gamma}{h_3}}
b_{k+1}^{h_2}(b_{k+1}-b_k)^{\gamma\left(1+\frac{h_5}{h_3}\right)}
\left(\log^2b_{k+1}+1\right)^{\frac{\gamma}{2h_3}}\\
&\le K_1^{1-\frac{\gamma}{h_3}}K_3^{\frac{\gamma}{h_3}}
b_{k+1}^{h_2+\frac{\gamma h_4}{h_3}}
\left(\log^2b_{k+1}+1\right)^{\frac{\gamma}{2h_3}},
\end{align*}
where we used the equality $h_5=h_4-h_3$.
Thus, the assumptions of Theorem~\ref{th:2} are satisfied.
Now the statements \eqref{th6-1} and \eqref{th6-2} follow from Corollaries \ref{cor:3} and \ref{cor:4} respectively.
\end{proof}
Now we present the first main result of this section, namely,  the power upper bound for the asymptotic growth of the trajectories of mBm with probability 1.

 \begin{theorem}\label{l:bound-mBm}
For any $\delta>0$ there exists a nonnegative random variable $\xi=\xi(\delta)$ such that for all $t>0$
\begin{equation}\label{eq:bound-proc}
\abs{Y(t)}\le \left(t^{h_2+\delta}\vee1\right)\xi
\qquad\text{a.\,s.},
\end{equation}
and there exist positive constants $C_1=C_1(\delta)$ and $C_2=C_2(\delta)$ such that for all $u>0$
\[
\P(\xi>u)\le C_1e^{-C_2u^2}.
\]
\end{theorem}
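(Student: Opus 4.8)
The plan is to derive Theorem~\ref{l:bound-mBm} as a direct consequence of Theorem~\ref{th:6} by choosing the weight function $a(t)$ and the partition $\{b_k\}$ appropriately, and then exploiting the control of the tail of $\xi$ provided by Corollary-type bounds \eqref{th6-1}--\eqref{th6-2}. Fix $\delta>0$. The natural choice is $a(t)=t^{h_2+\delta}\vee 1$, since this is exactly the weight appearing on the right-hand side of \eqref{eq:bound-proc}. For the sequence $\{b_k\}$ I would take a simple arithmetic partition, say $b_k=k$, so that $b_{k+1}-b_k=1\ge 1$ and $a_k=a(k)=k^{h_2+\delta}\vee 1$. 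With these choices the series in \eqref{eq:cond-mBm} becomes, up to constants and logarithmic factors,
\[
\sum_{k}\frac{(k+1)^{h_2+\frac{\gamma h_4}{h_3}}\bigl(\log^2(k+1)+1\bigr)^{\frac{\gamma}{2h_3}}}{k^{h_2+\delta}\vee 1},
\]
and I must verify that this sum converges for a suitable $\gamma\in(0,1]$.

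The key step is to check the summability condition \eqref{eq:cond-mBm}. The exponent of $k$ in the summand behaves like $\frac{\gamma h_4}{h_3}-\delta$ (the $b_{k+1}^{h_2}$ in the numerator cancels the $a_k\sim k^{h_2+\delta}$ down to $k^{-\delta}$), so modulo the slowly varying logarithmic factor the series converges precisely when $\frac{\gamma h_4}{h_3}-\delta<-1$, i.e.\ when $\gamma<\frac{h_3(\delta-1)}{h_4}$. Since this threshold could be non-positive for small $\delta$, the honest move is to pick $\gamma>0$ small enough that $\frac{\gamma h_4}{h_3}<\delta$, guaranteeing the exponent is strictly less than $-1$ once we account correctly; more carefully, the summand is bounded by $C\,k^{\frac{\gamma h_4}{h_3}-\delta}(\log k)^{\gamma/h_3}$ for large $k$, and choosing $\gamma$ so small that $\frac{\gamma h_4}{h_3}-\delta<-1$ makes the logarithm harmless and the series convergent. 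I would state explicitly that such a $\gamma=\gamma(\delta)$ exists, noting that the first few terms (where $a_k=1$) are finite and contribute only a constant.

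Once \eqref{eq:cond-mBm} holds for this $\gamma$, Theorem~\ref{th:6} applies directly. The bound \eqref{eq:bound-proc} is then nothing but the pointwise inequality $\abs{Y(t)}\le a(t)\,\xi$ with $\xi=\sup_{t>0}\frac{\abs{Y(t)}}{a(t)}$, which holds a.\,s.\ on the event of finiteness of the supremum; finiteness itself follows because \eqref{th6-1} shows $\P\{\xi>u\}\to 0$ as $u\to\infty$. For the tail estimate, I would invoke \eqref{th6-2}: for $u>A$,
\[
\P(\xi>u)\le 2^{\frac{2}{h_3}-1}\sqrt{e}\,\exp\set{-\frac{u^2}{2A^2}}A_7\Bigl(1-\sqrt{1-\tfrac{A^2}{u^2}},\gamma\Bigr).
\]
The remaining point is to absorb the factor $A_7(\cdot,\gamma)$ into the exponential. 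As $u\to\infty$ the argument $1-\sqrt{1-A^2/u^2}\to 0$, so $\theta\to 0$ and the factor $\theta^{-1/h_3}$ inside $A_7$ blows up; I expect \emph{this} to be the main obstacle. The resolution is that $A_7$ grows only like $\exp\{C\theta^{-\gamma/h_3}\}$ and $\theta\sim A^2/(2u^2)$, so $\theta^{-\gamma/h_3}\sim (2u^2/A^2)^{\gamma/h_3}$ grows polynomially in $u$ with exponent $2\gamma/h_3$, which can be made strictly less than $2$ by our choice of small $\gamma$; hence this sub-Gaussian-order correction is dominated by the leading $\exp\{-u^2/(2A^2)\}$. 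Thus $\P(\xi>u)\le C_1(\delta)\exp\{-C_2(\delta)u^2\}$ for $u>A$ with any $C_2<\frac{1}{2A^2}$, and for $u\le A$ one enlarges $C_1$ so that the trivial bound $\P(\xi>u)\le 1\le C_1 e^{-C_2 u^2}$ holds on the compact range, completing the proof.
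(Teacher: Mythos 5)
Your overall strategy --- specialize Theorem~\ref{th:6} with $a(t)=t^{h_2+\delta}\vee1$, define $\xi=\sup_{t>0}\abs{Y(t)}/a(t)$, and read off the sub-Gaussian tail --- is exactly the paper's, but your verification of the hypothesis \eqref{eq:cond-mBm} contains a genuine gap that invalidates the proof for the most relevant range of $\delta$. With the arithmetic partition $b_k=k$ the summand of \eqref{eq:cond-mBm} is of order $k^{\frac{\gamma h_4}{h_3}-\delta}(\log k)^{\gamma/h_3}$, so, as you yourself compute, convergence forces $\frac{\gamma h_4}{h_3}-\delta<-1$, i.e.\ $\gamma<\frac{h_3(\delta-1)}{h_4}$. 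For $\delta\le1$ no $\gamma>0$ satisfies this: since $\gamma>0$, the exponent $\frac{\gamma h_4}{h_3}-\delta$ is strictly larger than $-\delta\ge-1$, and the series diverges no matter how small $\gamma$ is. Your proposed fix --- ``pick $\gamma>0$ small enough that $\frac{\gamma h_4}{h_3}<\delta$'' and then claim the exponent drops below $-1$ --- is self-contradictory: that inequality only makes the exponent negative, not less than $-1$. So your argument proves the theorem only for $\delta>1$, whereas the statement (and its application in Section~\ref{sec:estimation}, which needs $\delta<1-h_2$) requires all $\delta>0$.

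The repair is to let the blocks grow geometrically, which is what the paper does: take $b_0=0$, $b_k=e^k$ for $k\ge1$, so that $a_k=e^{k(h_2+\delta)}$ and the summand of \eqref{eq:cond-mBm} is of order $e^{k\left(\frac{\gamma h_4}{h_3}-\delta\right)}$ times a polynomial in $k$; this converges as soon as $\frac{\gamma h_4}{h_3}<\delta$, i.e.\ for any $\gamma\in\left(0,\frac{\delta h_3}{h_4}\wedge1\right)$, which exists for every $\delta>0$. A secondary remark: your tail bound via \eqref{th6-2}, with $\theta=1-\sqrt{1-A^2/u^2}$ depending on $u$ and an asymptotic analysis of $A_7$, does work (the correction factor is of order $\exp\set{Cu^{2\gamma/h_3}}$, which is dominated by the Gaussian term once $\gamma<h_3$), but it is unnecessarily delicate: the paper simply applies \eqref{th6-1} with a fixed $\theta\in(0,1)$, so that $A_7(\theta,\gamma)$ is a constant independent of $u$, giving $\P(\xi>u)\le C_1e^{-C_2u^2}$ for all $u>0$ at once with $C_1=2^{\frac{2}{h_3}-1}A_7(\theta,\gamma)$ and $C_2=\frac{(1-\theta)^2}{2A^2}$.
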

\begin{proof}
Put in Theorem~\ref{th:6}
$a(t)=t^{h_2+\delta}\vee1$, $b_0=0$, $b_k=e^k$, $k\ge1$,
and arbitrary
$\gamma\in\left(0,\frac{\delta h_3}{h_4}\wedge1\right)$,
$\theta\in(0,1)$.
Then $a_0=1$, $a_k=e^{k(h_2+\delta)}$, $k\ge1$,
\begin{multline*}
\sum_{k=0}^\infty\frac{b_{k+1}^{h_2+\frac{\gamma h_4}{h_3}} \left(\log^2b_{k+1}+1\right)^{\frac{\gamma}{2h_3}}}{a_k}\\
=e^{h_2+\frac{\gamma h_4}{h_3}}
\left(2^{\frac{\gamma}{2h_3}}+\sum_{k=1}^\infty e^{k\left(\frac{\gamma h_4}{h_3}-\delta\right)}\left((k+1)^2+1\right)^{\frac{\gamma}{2h_3}}\right)<\infty.
\end{multline*}
Now the result follows from Theorem~\ref{th:6}, if we additionally put
\[
\xi=\sup_{t>0}\frac{\abs{Y(t)}}{t^{h_2+\delta}\vee1},
\quad
C_1=2^{\frac{2}{h_3}-1}A_7(\theta,\gamma),
\quad
C_2=\frac{(1-\theta)^2}{2{A}^2}.\qedhere
\]
\end{proof}

\subsection{Asymptotic growth with probability 1 of  the increments of mBm}
Let  $\Delta\in(0,1]$.
Consider the increment of mBm $Z(\ve t)=Y(t_1)-Y(t_2)$, $\ve t\in\T_\Delta$.
Let $b_k$, $k\ge0$, be  a sequence such that $b_0 =0$, $b_{k+1} -b_k \geq 1,$
and let
$a(t)>0$ be an increasing continuous function such that $a(t)\to\infty$ as $t\to\infty$,
 $a_k = a(b_k)$.

\begin{theorem}\label{th:5}
Let the Hurst function $H$ satisfy the conditions (H\ref{H1}) and (H\ref{H3}).
Assume that there exists $0<\gamma\le1$ such that
\begin{equation}\label{eq:1series}
\sum_{l=0}^\infty\frac{b_{l+1}^{\frac{2h_5\gamma}{h_3}}z(b_{l+1})}{a_l}<\infty.
\end{equation}
Then for all  $\theta \in(0,1)$, $\eps\in(0,h_3)$ and $\lambda>0$
\[
\E \exp\set{ \lambda \; \sup_{ \ve{t} \in \T_\Delta}
\frac {\abs{Z(\ve{t})}}{a(t_1)}}
 \le\frac1\Delta\exp \set{ \frac{\lambda^2A^2\Delta^{2h_3}}{2(1- \theta)^2}}
A_8(\theta,\gamma,\eps),
\]
where
$A = K_3\sum_{l=0}^\infty \frac{z(b_{l+1})}{a_l}$,
\begin{multline*}
A_8(\theta,\gamma,\eps)=2^{\frac2\eps+2}\exp\set{\frac{K_3}{A}
\sum_{l=0}^\infty\frac{z(b_{l+1})\log(b_{l+1}-b_l)}{a_l}}\\
\times\exp\set{\frac{K_3}{\gamma A4^{2\gamma}\left(1-\frac{\eps}{h_3}\right)^\frac{2\gamma}{\eps}\theta^{\frac{2\gamma}{h_3}}}
\sum_{l=0}^\infty\frac{z(b_{l+1})(b_{l+1}-b_l)^{\frac{2h_5\gamma}{h_3}}}{a_l}}.
\end{multline*}
\end{theorem}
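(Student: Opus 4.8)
The plan is to recognize $Z=\set{Z(\ve t),\ve t\in\T_\Delta}$ as a centered Gaussian process on the strip $\T_\Delta$ --- it is a fixed linear combination of the jointly Gaussian values $Y(t_1),Y(t_2)$ --- and to apply Theorem~\ref{th:4} with the weight $a(t_1)$. The whole argument thus reduces to producing, for the increment process $Z$, admissible values of the variance bound $m_l$, the H\"older exponent $\beta$, and the H\"older constant $c_l$ entering conditions $(vi)$--$(viii)$, and then to checking that \eqref{eq:1series} forces the three summability requirements in $(viii)$. Concretely I would take $\beta=h_3$, $m_l=K_3\Delta^{h_3}z(b_{l+1})$ and $c_l=2K_3(b_{l+1}-b_l)^{h_5}z(b_{l+1})$; since the proof of Theorem~\ref{th:4} (through Corollary~\ref{cor:b1}) only ever uses these quantities as upper bounds for the variance and for the $L^2$-modulus of continuity, it remains valid with any such bounds substituted.

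First I would verify $(vi)$. For $\ve t\in\T_{b_l,b_{l+1},\Delta}$ one has $\abs{t_1-t_2}\le\Delta\le1$ with $t_1,t_2\in[b_l,b_{l+1}]$ and $b_{l+1}-b_l\ge1$, so Lemma~\ref{l:2}\eqref{l2-a} gives $\left(\E Z(\ve t)^2\right)^{1/2}\le K_3\abs{t_1-t_2}^{h_3}z(b_{l+1})\le K_3\Delta^{h_3}z(b_{l+1})=m_l$. For $(vii)$, note that $Z(\ve t)-Z(\ve s)=Y(t_1)-Y(t_2)-Y(s_1)+Y(s_2)$ is exactly the second difference bounded in Lemma~\ref{l:2}\eqref{l2-c}; since $d(\ve t,\ve s)=\max(\abs{t_1-s_1},\abs{t_2-s_2})\le h$, that lemma yields $\left(\E(Z(\ve t)-Z(\ve s))^2\right)^{1/2}\le 2K_3(b_{l+1}-b_l)^{h_5}z(b_{l+1})h^{h_3}=c_lh^{h_3}$, confirming the H\"older bound with $\beta=h_3$.

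Next I would check $(viii)$, the heart of the verification, by reducing every sum to \eqref{eq:1series}. Since $b_{l+1}\ge1$ one has $z(b_{l+1})\le b_{l+1}^{2h_5\gamma/h_3}z(b_{l+1})$, so $A_{\mathrm{T4}}:=\sum_l m_l/a_l=\Delta^{h_3}K_3\sum_l z(b_{l+1})/a_l<\infty$, giving the first requirement; here $A=K_3\sum_l z(b_{l+1})/a_l$ is finite and $A_{\mathrm{T4}}=\Delta^{h_3}A$. For the third sum, a direct computation gives
\[
m_l^{1-2\gamma/h_3}c_l^{2\gamma/h_3}
=2^{2\gamma/h_3}K_3\Delta^{h_3-2\gamma}\,z(b_{l+1})(b_{l+1}-b_l)^{2h_5\gamma/h_3},
\]
and $(b_{l+1}-b_l)^{2h_5\gamma/h_3}\le b_{l+1}^{2h_5\gamma/h_3}$ reduces it again to \eqref{eq:1series}. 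The delicate point is the logarithmic sum $\sum_l m_l\log(b_{l+1}-b_l)/a_l$: here I would use that $h_5=h_4-h_3\ge h_2-h_1>0$ always holds, so $2h_5\gamma/h_3>0$ and $\log(b_{l+1}-b_l)\le\log b_{l+1}\le C_\gamma\, b_{l+1}^{2h_5\gamma/h_3}$; this absorbs the logarithm into the power already present in \eqref{eq:1series} and yields convergence. This step --- realising that the strict gap $h_5>0$ is what tames the logarithm --- is the one I expect to be the main obstacle.

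Finally I would apply Theorem~\ref{th:4} and simplify. The exponent $\lambda^2A_{\mathrm{T4}}^2/(2(1-\theta)^2)$ becomes $\lambda^2A^2\Delta^{2h_3}/(2(1-\theta)^2)$ because $A_{\mathrm{T4}}=\Delta^{h_3}A$, matching the claimed prefactor. Substituting $m_l,c_l,\beta$ into $A_6(\theta,\gamma,\eps)$, the factor $1/A_{\mathrm{T4}}$ cancels the $\Delta^{h_3}$ in $m_l$ in the first exponential, turning it into $\frac{K_3}{A}\sum_l z(b_{l+1})\log(b_{l+1}-b_l)/a_l$; in the second exponential the $\Delta$-powers recombine as $\Delta^{2\gamma}\cdot\Delta^{-h_3}\cdot\Delta^{h_3-2\gamma}=1$, so after collecting constants one obtains a bound of the form $A_8(\theta,\gamma,\eps)$. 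The remaining work is routine bookkeeping of the numerical constants --- in particular tracking the powers of $\Delta$ and of $2$ --- which I would carry out at the end.
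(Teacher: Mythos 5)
Your proposal is correct and follows essentially the same route as the paper: verify conditions $(vi)$--$(viii)$ of Theorem~\ref{th:4} for $Z$ with $\beta=h_3$, $m_l=K_3\Delta^{h_3}z(b_{l+1})$, $c_l=2K_3(b_{l+1}-b_l)^{h_5}z(b_{l+1})$ via Lemma~\ref{l:2}, reduce the three summability conditions to \eqref{eq:1series}, and then invoke Theorem~\ref{th:4}. In fact you supply more detail than the paper, which dismisses the convergence of the three series as obvious, whereas you explicitly absorb the logarithm using $h_5\ge h_2-h_1>0$ and track how the powers of $\Delta$ recombine to give the stated constants.
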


\begin{proof}
We need to verify the assumptions of Theorem~\ref{th:4} for the process $Z$.
By Lemma~\ref{l:2}\eqref{l2-a}, for all $\ve t\in\T_{b_l,b_{l+1},\Delta}$
\begin{align*}
\left(\E(Z(\ve t))^2\right)^{\frac12}
&=\left(\E(Y(t_1)-Y(t_2))^2\right)^{\frac12}
\le K_3z(b_{l+1})\Delta^{h_3}.
\end{align*}
Hence, the condition $(vi)$ is satisfied with
$m_l=K_3z(b_{l+1})\Delta^{h_3}$.
Further, Lemma~\ref{l:2}\eqref{l2-c} implies
\[\sup_{\substack{ d(\ve{t}, \ve{s}) \leq h,\\ \ve{t}, \ve{s} \in {\T}_{b_l, b_{l+1},\Delta}}}
\left(\E ( Z (\ve{t}) - Z(\ve{s}))^2 \right)^{\frac{1}{2}} \leq 2K_3(b_{l+1}-b_l)^{h_5}z(b_{l+1}) h^{h_3}.
\]
Thus, the condition $(vii)$ holds true with
$c_l=2K_3(b_{l+1}-b_l)^{h_5}z(b_{l+1})$,
$\beta=h_3$.
It is not hard to see that in this case the condition $(viii)$ is equivalent to the condition
\begin{gather*}
\sum_{l=0}^\infty \frac{z(b_{l+1})}{a_l} < \infty, \quad
\sum_{l=0}^\infty\frac{z(b_{l+1})\log(b_{l+1}-b_l)}{a_l} <\infty,\\
\sum_{l=0}^\infty\frac{z(b_{l+1}) (b_{l+1}-b_l)^{\frac{2h_5\gamma}{h_3}}}{a_l}<\infty.
\end{gather*}
Obviously, these three series converge when \eqref{eq:1series} holds.
Now the result follows from Theorem~\ref{th:4}.
\end{proof}

Let $d_k$, $k\ge0$, be  a strictly decreasing sequence such that
$d_0 =1$, $d_k\downarrow0$ as\linebreak $k\to\infty$.
Let $g:(0,1]\to(0,\infty)$ be a continuous function and $g_k$, $k\ge0$, be such a sequence that
$0<g_k\le\min_{d_{k+1}\le t\le d_k}g(t)$.

\begin{theorem}\label{th:7}
Assume that the assumptions of Theorem~\ref{th:5} hold and
\[
\sum_{k=0}^{\infty}\frac{d_k^{h_3}\abs{\log d_k}}{g_k}<\infty.
\]
Then for all  $\theta \in(0,1)$, $\eps\in(0,h_3)$ and $\lambda>0$
\[
I(\lambda)=
\E \exp\set{ \lambda \; \sup_{0\le t_2<t_1\le t_2+1}
\frac {\abs{Z(\ve{t})}}{a(t_1)g(t_1-t_2)}}
 \le\exp\set{ \frac{\lambda^2A^2B^2}{2(1- \theta)^2}}
 A_9(\theta,\gamma,\eps),
 \]
where
\[
B=\sum_{k=0}^{\infty}\frac{d_k^{h_3}}{g_k},\qquad
A_9(\theta,\gamma,\eps)=\exp\set{\frac1B\sum_{k=0}^{\infty}\frac{d_k^{h_3}\abs{\log d_k}}{g_k}}
A_8(\theta,\gamma,\eps).
\]
\end{theorem}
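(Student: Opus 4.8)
The plan is to deduce Theorem~\ref{th:7} from Theorem~\ref{th:5} by slicing the range $(0,1]$ of the increment variable $t_1-t_2$ along the sequence $\set{d_k}$ and treating each slice as a strip of the type already handled. The crucial structural observation is that the quantity $A=K_3\sum_{l}z(b_{l+1})/a_l$ and the factor $A_8(\theta,\gamma,\eps)$ appearing in Theorem~\ref{th:5} do \emph{not} depend on the strip width $\Delta$: the only $\Delta$-dependence of that bound sits in the prefactor $1/\Delta$ and in the exponent through $\Delta^{2h_3}$. Hence Theorem~\ref{th:5} may be invoked with $\Delta=d_k$ for every $k$, always with the same $A$ and the same $A_8$.

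First I would record a pointwise estimate. Since $\set{d_k}$ strictly decreases to $0$ with $d_0=1$, the intervals $[d_{k+1},d_k]$ cover $(0,1]$, and on the region where $d_{k+1}\le t_1-t_2\le d_k$ we have $g(t_1-t_2)\ge g_k$ by the definition of $g_k$. Bounding the supremum over this region by the supremum over the whole strip $\T_{d_k}$ and replacing the supremum over $k$ by the sum, I obtain
\[
\sup_{0\le t_2<t_1\le t_2+1}\frac{\abs{Z(\ve t)}}{a(t_1)g(t_1-t_2)}
\le\sum_{k=0}^\infty\frac{1}{g_k}\sup_{\ve t\in\T_{d_k}}\frac{\abs{Z(\ve t)}}{a(t_1)}.
\]

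Next, for any weights $r_k>0$ with $\sum_k r_k^{-1}=1$, the H\"older inequality gives
\[
I(\lambda)\le\prod_{k=0}^\infty\left(\E\exp\set{\lambda\frac{r_k}{g_k}\sup_{\ve t\in\T_{d_k}}\frac{\abs{Z(\ve t)}}{a(t_1)}}\right)^{1/r_k},
\]
and Theorem~\ref{th:5} applied to each factor with width $\Delta=d_k$ and parameter $\lambda r_k/g_k$ bounds the $k$-th term by $\bigl[d_k^{-1}\exp\{\lambda^2r_k^2A^2d_k^{2h_3}/(2(1-\theta)^2g_k^2)\}A_8\bigr]^{1/r_k}$. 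Taking logarithms turns the product into the three sums $\sum_k r_k^{-1}\abs{\log d_k}$, $\frac{\lambda^2A^2}{2(1-\theta)^2}\sum_k r_k d_k^{2h_3}/g_k^2$, and $\log A_8$ (the last because $\sum_k r_k^{-1}=1$).

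Finally I would make the decisive choice $r_k=Bg_k/d_k^{h_3}$, which is admissible since $\sum_k r_k^{-1}=B^{-1}\sum_k d_k^{h_3}/g_k=1$. With it the second sum collapses to $B^2$, producing the claimed exponent $\lambda^2A^2B^2/(2(1-\theta)^2)$, while the first sum becomes $B^{-1}\sum_k d_k^{h_3}\abs{\log d_k}/g_k$, which is exactly the logarithm of the extra factor distinguishing $A_9(\theta,\gamma,\eps)$ from $A_8(\theta,\gamma,\eps)$. I expect no genuine obstacle: the argument is a routine H\"older splitting, and the only points requiring a line of care are verifying that $B=\sum_k d_k^{h_3}/g_k<\infty$ (which follows from the hypothesis $\sum_k d_k^{h_3}\abs{\log d_k}/g_k<\infty$, since $\abs{\log d_k}\ge1$ for all large $k$) and confirming the $\Delta$-independence of $A$ and $A_8$ noted at the outset.
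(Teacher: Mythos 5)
Your proposal is correct and coincides with the paper's own proof: the same slicing of $(0,1]$ along $\set{d_k}$ with $\T^{(k)}\subset\T_{d_k}$, the same H\"older splitting with weights $r_k$, the same application of Theorem~\ref{th:5} with $\Delta=d_k$ and parameter $\lambda r_k/g_k$, and the identical choice $r_k=Bg_k/d_k^{h_3}$. Your explicit remarks that $A$ and $A_8$ are $\Delta$-independent and that $B<\infty$ follows from the hypothesis are points the paper leaves implicit, but they are verifications, not a different argument.
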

\begin{proof}
Denote
$\T^{(k)}=\set{(t_1,t_2)\in\R_+ : d_{k+1}<t_1-t_2\le d_k}$.
Then $\T^{(k)}\subset\T_{d_k}$
and $\set{(t_1,t_2)\in\R : 0\le t_2<t_1\le t_2+1}=\bigcup_{k=0}^\infty\T^{(k)}$.
Therefore
\begin{align*}
\sup_{0\le t_2<t_1\le t_2+1}
\frac {\abs{Z(\ve{t})}}{a(t_1)g(t_1-t_2)}
&\le\sum_{k=0}^\infty\sup_{\ve t\in\T^{(k)}}
\frac {\abs{Z(\ve{t})}}{a(t_1)g(t_1-t_2)}\\
&\le\sum_{k=0}^\infty\frac{1}{g_k}\;\sup_{\ve t\in\T^{(k)}}
\frac {\abs{Z(\ve{t})}}{a(t_1)}
\le\sum_{k=0}^\infty\frac{1}{g_k}\;\sup_{\ve t\in\T_{d_k}}
\frac {\abs{Z(\ve{t})}}{a(t_1)}.
\end{align*}
Let $r_k>0$,
$\sum_{k=0}^\infty\frac{1}{r_k}=1$.
Then
\[
I(\lambda)\le\E\exp\set{\sum_{k=0}^\infty\frac{\lambda}{g_k}\,\sup_{\ve t\in\T_{d_k}}
\frac {\abs{Z(\ve{t})}}{a(t_1)}}
\le\prod_{k=0}^\infty\left(\E\exp\set{\frac{\lambda r_k}{g_k}\,\sup_{\ve t\in\T_{d_k}}
\frac {\abs{Z(\ve{t})}}{a(t_1)}}\right)^{\frac{1}{r_k}}.
\]
By Theorem~\ref{th:5}, we get
\begin{align*}
I(\lambda)&\le\prod_{k=0}^\infty\left(\frac1{d_k}
\exp\set{\frac{\lambda^2r_k^2A^2d_k^{2h_3}}{2(1- \theta)^2g_k^2}}
A_8(\theta,\gamma,\eps)\right)^{\frac{1}{r_k}}\\
&=A_8(\theta,\gamma,\eps)
\exp\set{\frac{\lambda^2A^2}{2(1- \theta)^2}\sum_{k=0}^\infty\frac{r_kd_k^{2h_3}}{g_k^2}}
\exp\set{-\sum_{k=0}^\infty\frac{\log d_k}{r_k}}.
\end{align*}
Put $r_k=\frac{Bg_k}{d_k^{h_3}}$.
Then
\[
I(\lambda)\le
A_8(\theta,\gamma,\eps)
\exp\set{\frac{\lambda^2A^2B^2}{2(1- \theta)^2}}
\exp\set{-\frac1B\sum_{k=0}^\infty\frac{d_k^{h_3}\log d_k}{g_k}}.\qedhere
\]
\end{proof}

\begin{corollary}\label{cor:d1}
Let the assumptions of Theorem~\ref{th:7} hold.
Then for  all   $\theta\in(0,1)$, $\eps\in(0,h_3)$ and  $u>0$,
\begin{equation}\label{eq:b1}
\P\set{\sup_{0\le t_2<t_1\le t_2+1}
\frac {\abs{Z(\ve{t})}}{a(t_1)g(t_1-t_2)}>u}\le
\exp\set{-\frac{u^2(1-\theta)^2}{2A^2B^2}}
A_9(\theta,\gamma,\eps).
\end{equation}

Indeed, by Chebyshev's inequality,
\begin{align*}
\P&\set{\sup_{0\le t_2<t_1\le t_2+1}
\frac {\abs{Z(\ve{t})}}{a(t_1)g(t_1-t_2)}>u}\\
&\qquad\le \frac1{e^{\lambda u}}\E \exp\set{ \lambda \; \sup_{0\le t_2<t_1\le t_2+1}
\frac {\abs{Z(\ve{t})}}{a(t_1)g(t_1-t_2)}}\\
&\qquad\le\exp\set{ \frac{\lambda^2A^2B^2}{2(1- \theta)^2}-\lambda u}
 A_9(\theta,\gamma,\eps).
\end{align*}
If we put $\lambda=\frac{u(1-\theta)^2}{A^2B^2}$, we get \eqref{eq:b1}.
\end{corollary}

With the help of Corollary~\ref{cor:d1}, we can now state the second main result of this section, which
is the following upper bound for the asymptotic growth of the increments of mBm with probability 1.

\begin{theorem}\label{l:bound-incr}
For any $\eps>0$ and any $p>2$ there exists a nonnegative random variable $\eta=\eta(\eps,p)$ such that for all $0\le t_2<t_1\le t_2+1$
\begin{equation}\label{bound-incr}
\abs{Z(\ve{t})}\le \left(t_1^{h_2+\eps}\vee1\right)(t_1-t_2)^{h_3}\left(\abs{\log(t_1-t_2)}^p\vee1\right)\eta
\qquad\text{a.\,s.},
\end{equation}
and there exist positive constants $C_1=C_1(\eps,p)$ and $C_2=C_2(\eps,p)$ such that for all $u>0$
\[
\P(\eta>u)\le C_1e^{-C_2u^2}.
\]
\end{theorem}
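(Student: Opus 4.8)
The plan is to follow the pattern of the proof of Theorem~\ref{l:bound-mBm}, but now feeding Corollary~\ref{cor:d1} (which rests on Theorems~\ref{th:5} and~\ref{th:7}) with a carefully calibrated pair of weights. The decisive observation is that the factor $(t_1-t_2)^{h_3}$ in \eqref{bound-incr} must be folded into the second weight rather than left to the random factor: I would take
\[
a(t)=t^{h_2+\eps}\vee1,\qquad g(s)=s^{h_3}\left(\abs{\log s}^p\vee1\right),
\]
so that $a(t_1)\,g(t_1-t_2)$ is precisely the deterministic part of the right-hand side of \eqref{bound-incr}. I would then set $b_0=0$, $b_k=e^{k}$ for $k\ge1$ (hence $a_k=e^{k(h_2+\eps)}$) and $d_k=e^{-k}$ (hence $\abs{\log d_k}=k$, $d_k^{h_3}=e^{-kh_3}$). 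For the minorants I would take $g_0=e^{-h_3}$ and, for $k\ge1$, $g_k=e^{-(k+1)h_3}k^{p}$; on the interval $[e^{-(k+1)},e^{-k}]$ one has $s^{h_3}\ge e^{-(k+1)h_3}$ and $\abs{\log s}\ge k$, so indeed $0<g_k\le\min_{d_{k+1}\le t\le d_k}g(t)$, as required by the hypotheses of Theorem~\ref{th:7}.

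Next I would verify the two summability conditions. Condition \eqref{eq:1series} of Theorem~\ref{th:5} reduces, after inserting $z(b_{l+1})=e^{(l+1)h_2}((l+1)^2+1)^{1/2}$, to a series whose $l$-th term is, up to a multiplicative constant, $e^{\,l(2h_5\gamma/h_3-\eps)}\bigl((l+1)^2+1\bigr)^{1/2}$; this converges as soon as $2h_5\gamma/h_3<\eps$, so I would fix any $\gamma\in\bigl(0,\tfrac{\eps h_3}{2h_5}\wedge1\bigr)$ (note $h_5=h_4-h_3\ge h_2-h_1>0$ always), together with arbitrary $\theta\in(0,1)$ and an arbitrary $\eps'\in(0,h_3)$, the latter being the parameter written $\eps$ in Theorems~\ref{th:5}--\ref{th:7} and kept notationally distinct from the growth exponent $\eps$ of the present statement. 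The additional hypothesis of Theorem~\ref{th:7}, namely $\sum_k d_k^{h_3}\abs{\log d_k}/g_k<\infty$, becomes $e^{h_3}\sum_{k\ge1}k^{1-p}<\infty$, and this is exactly where the assumption $p>2$ enters. With these choices both $A=K_3\sum_l z(b_{l+1})/a_l$ and $B=\sum_{k\ge0}d_k^{h_3}/g_k=e^{h_3}\bigl(1+\sum_{k\ge1}k^{-p}\bigr)$ are finite, so all quantities in $A_8$ and $A_9$ are well defined.

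Having confirmed the hypotheses, I would apply Corollary~\ref{cor:d1} with the above data and put
\[
\eta=\sup_{0\le t_2<t_1\le t_2+1}\frac{\abs{Z(\ve t)}}{a(t_1)\,g(t_1-t_2)}.
\]
By its very definition $\eta$ yields \eqref{bound-incr} for all $0\le t_2<t_1\le t_2+1$ almost surely, while \eqref{eq:b1} gives $\P(\eta>u)\le C_1e^{-C_2u^2}$ with $C_1=A_9(\theta,\gamma,\eps')$ and $C_2=(1-\theta)^2/(2A^2B^2)$, both positive and depending only on $\eps$ and $p$. The one genuinely delicate point is the calibration of $g$: it must simultaneously reproduce the $(t_1-t_2)^{h_3}$ factor and the logarithmic factor of \eqref{bound-incr} while still making $\sum_k d_k^{h_3}\abs{\log d_k}/g_k$ converge. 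It is this convergence that forces $p>2$, whereas the polynomial growth exponent $\eps$ in $t_1$ is controlled, exactly as for the process in Theorem~\ref{l:bound-mBm}, by the convergence of \eqref{eq:1series}.
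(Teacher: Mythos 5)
Your proof is correct and takes essentially the same route as the paper's: the same weight $a(t)=t^{h_2+\eps}\vee1$ with $b_l=e^l$, the same sequences $d_k=e^{-k}$, $g_0=e^{-h_3}$, $g_k=e^{-(k+1)h_3}k^p$, the same choice $\gamma\in\bigl(0,\tfrac{\eps h_3}{2h_5}\wedge1\bigr)$ to secure \eqref{eq:1series}, the same use of $p>2$ for $\sum_k d_k^{h_3}\abs{\log d_k}/g_k<\infty$, and the same definition of $\eta$ via Corollary~\ref{cor:d1}. Your one deviation --- taking $g(s)=s^{h_3}\left(\abs{\log s}^p\vee1\right)$ rather than the paper's $g(s)=s^{h_3}\abs{\log s}^p$ --- is harmless and in fact slightly cleaner, since it keeps $g$ strictly positive at $s=1$, as the hypotheses of Theorem~\ref{th:7} formally require.
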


\begin{proof}
Put in Theorem~\ref{th:5}
$a(t)=t^{h_2+\eps}\vee1$, $b_0=0$, $b_l=e^l$, $l\ge1$.
Then $a_0=1$, $a_l=e^{l(h_2+\eps)}$, $l\ge1$, and
\[
\sum_{l=0}^\infty\frac{b_{l+1}^{\frac{2h_5\gamma}{h_3}}z(b_{l+1})}{a_l}
=e^{\frac{2h_5\gamma}{h_3}+h_2}\left(\sqrt2+\sum_{l=1}^\infty e^{l\left(\frac{2h_5\gamma}{h_3}-\eps\right)}\left((l+1)^2+1\right)^{1/2}\right).
\]
Therefore, \eqref{eq:1series} holds, if we choose $\gamma\in\left(0,\frac{\eps h_3}{2h_5}\wedge1\right)$.

Further, put in Theorem~\ref{th:7}
$g(t)=t^{h_3}\abs{\log t}^p$,
$d_k=e^{-k}$, $k\ge0$,
$g_0=e^{-h_3}$,
$g_k=d_{k+1}^{h_3}\abs{\log d_k}^p=e^{-(k+1)h_3}k^p$, $k\ge1$.
We have
\begin{gather*}
\sum_{k=0}^{\infty}\frac{d_k^{h_3}}{g_k}
=e^{h_3}\left(1+\sum_{k=1}^{\infty}\frac{1}{k^p}\right)<\infty,
\\
\sum_{k=0}^{\infty}\frac{d_k^{h_3}\abs{\log d_k}}{g_k}
=e^{h_3}\sum_{k=1}^{\infty}\frac{1}{k^{p-1}}<\infty.
\end{gather*}
Thus, the conditions of Theorem~\ref{th:7} are satisfied.
The result follows from Corollary~\ref{cor:d1}, if we  put
\[
\eta=\sup_{0\le t_2<t_1\le t_2+1}
\frac {\abs{Z(\ve{t})}}{\left(t_1^{h_2+\eps}\vee1\right)(t_1-t_2)^{h_3}\left(\abs{\log(t_1-t_2)}^p\vee1\right)}.\qedhere
\]
\end{proof}

\subsection{Pathwise integration with respect to multifractional Brownian motion}\label{integration}
To describe the statistical model, we need to introduce the pathwise integrals w.r.t. mBm.
 Consider
two non-random functions
 $f$ and $g$  defined on some interval $[a,b]\subset \R^+$.
Let $\alpha>0$.
Denote the Riemann--Liouville left- and right-sided
fractional integrals on $(a, b)$ of order $\alpha$ by
\[
(I_{a+}^{\alpha}f)(s):=\frac{1}{\Gamma(\alpha)}\int_{a}^{s}f(t)(s-t)^{\alpha-1}dt,
\]
and
\[
(I_{b-}^{\alpha}g)(s):=\frac{1}{\Gamma(\alpha)}\int_{s}^{b}g(t)(t-s)^{\alpha-1}dt,
\]
respectively.
 Suppose also
that the  the following limits exist:
\[
f(u+):=\lim_{\delta\downarrow0}f(u+\delta)\text{
and } g(u-):=\lim_{\delta\downarrow0}g(u-\delta),\   a\leq u\leq
b.
\]
Let $f_{a+}(s):=(f(s)-f(a+))1_{(a,b)}(s), \
g_{b-}(s):=(g(b-)-g(s))1_{(a,b)}(s).$
Suppose that
$f_{a+}\in I_{a+}^{\alpha}(L_p[a,b])), \ g_{b-}\in
I_{b-}^{1-\alpha}(L_q[a,b]))$ for some $p\geq 1, \ q\geq 1,
1/p+1/q\leq 1, \ 0\leq \alpha \leq 1.$
Introduce the fractional derivatives
\begin{equation*}\label{eq1.1}(\mathcal{D}_{a+}^{\alpha}f_{a+})(s)=\frac{1}{\Gamma(1-\alpha)}\Big(\frac{f_{a+}(s)}{(s-a)^\alpha}+\alpha
\int_{a}^s\frac{f_{a+}(s)-f_{a+}(u)}{(s-u)^{1+\alpha}}du\Big)1_{(a,b)}(s)\end{equation*}
\begin{equation*}\label{eq1.2}(\mathcal{D}_{b-}^{1-\alpha}g_{b-})(s)=\frac{e^{-\emph{i}\pi
\alpha}}{\Gamma(\alpha)}\Big(\frac{g_{b-}(s)}{(b-s)^{1-\alpha}}+(1-\alpha)
\int_{s}^b\frac{g_{b-}(s)-g_{b-}(u)}{(s-u)^{2-\alpha}}du\Big)1_{(a,b)}(s).\end{equation*}
It is known that
 $\mathcal{D}_{a+}^{\alpha}f_{a+}\in L_p[a,b], \ \mathcal{D}_{b-}^{1-\alpha}g_{b-}\in
L_q[a,b].$
Under above
assumptions, the generalized (fractional) Lebesgue-Stieltjes
integral $\int_a^bf(x)dg(x)$ is defined as
$$\int_a^bf(x)dg(x):=e^{\emph{i}\pi
\alpha}\int_a^b(\mathcal{D}_{a+}^{\alpha}f_{a+})(x)(\mathcal{D}_{b-}^{1-\alpha}g_{b-})(x)dx+f(a+)(g(b-)-g(a+)),$$
and for $\alpha p<1$ it can be simplified to
$$\int_a^bf(x)dg(x):=e^{\emph{i}\pi
\alpha}\int_a^b(\mathcal{D}_{a+}^{\alpha}f)(x)(\mathcal{D}_{b-}^{1-\alpha}g_{b-})(x)dx,$$
see \cite{zele,Zah99}.

Assume that the Hurst function satisfies the conditions (H\ref{H1})--(H\ref{H3}) and, additionally, $h_3=\min\set{h_1,\kappa}>1/2$.  In this case, according to
Remark~\ref{remark1}, process  $Y$ with probability 1 has H\"{o}lder trajectories up to order $h_3$  on any finite interval $[0,T]$.
 As  follows from \cite{Samko}, for any $1-h_3<\alpha<1$ there
exists  the fractional derivative $\mathcal{D}_{b-}^{1-\alpha}Y_{b-}\in
L_{\infty}[a,b]$ for any $0\leq a<b\le T.$
Let we have another process, say $Z=\{Z_t, t\in [0,T]\}$, also having H\"{o}lder trajectories up to some order $h$ with $h+h_3>1$. In particular, it can be $h=h_3.$ Then, according to \cite{zele}, there exists  an  integral $\int_a^b Z_sdY_s$, which is the limit a.s. of the Riemann sums and   has the standard properties (so called path-wise integral).
This integral is defined as
\begin{equation}\label{eq2.1} \int_a^bZdY:=e^{\emph{i}\pi
\alpha}\int_a^b(\mathcal{D}_{a+}^{\alpha}Z)(x)(\mathcal{D}_{b-}^{1-\alpha}Y_{b-})(x)dx.\end{equation}
An  evident estimate follows immediately from \eqref{eq2.1}:
\begin{equation}\label{eq2.2}\Big|\int_a^bZ\,dY\Big|\leq\sup_{a\leq x\leq b}|(\mathcal{D}_{b-}^{1-\alpha}Y_{b-})(x)|\int_a^b|(\mathcal{D}_{a+}^{\alpha}Z)(x)|dx.\end{equation}

\section{Drift parameter estimation in stochastic differential equations driven by mBm}\label{sec:estimation}

\subsection{Linear model}

Consider the process
\begin{equation}\label{eq:linear}
X_t=\theta t+Y_t,\quad t\ge0,
\end{equation}
where $\theta\in\R$ is an unknown parameter, $Y_t$ is an mBm with the Hurst function $H_t$ satisfying  the conditions (H\ref{H1})--(H\ref{H3}).
Assume that our aim is to estimate the parameter $\theta$ by the observations of $X_t$.
Let us introduce the estimator
\[
\hat\theta_T=\frac{X_T}{T}=\theta+\frac{Y_T}{T}.
\]

\begin{theorem}
\begin{enumerate}[1)]
\item The estimator $\hat\theta_T$ is strongly consistent as $T\to\infty$.
\item For all $T>0$,
\[
\frac{T^{1-H_T}}{C(H_T)}\left(\hat\theta_T-\theta\right)
\simeq N(0,1),
\]
where
$C(H)=\left(\frac{\pi}{H\Gamma(2H)\sin(\pi H)}\right)^{1/2}$.
Consequently, a confidence interval of level $1-\alpha$ is given by
\[
\hat\theta_T\pm\frac{C(H_T)}{T^{1-H_T}}z_{1-\alpha/2},
\]
where $z_p$ denotes the $p$-quantile of the standard normal distribution.
\end{enumerate}
\end{theorem}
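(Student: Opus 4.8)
The plan is to prove both parts by exploiting the decomposition $\hat\theta_T-\theta = Y_T/T$, so that everything reduces to understanding the single random variable $Y_T$.

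\textbf{Part 1 (strong consistency).} I would apply Theorem~\ref{l:bound-mBm} directly. Choose any $\delta$ with $0<\delta<1-h_2$ (this is possible since $h_2<1$). Then Theorem~\ref{l:bound-mBm} produces a finite random variable $\xi=\xi(\delta)$ with $\abs{Y(t)}\le(t^{h_2+\delta}\vee1)\xi$ almost surely for all $t>0$, where $\xi$ has Gaussian-type tails and hence is finite a.s. Consequently, for $T\ge1$,
\[
\abs{\hat\theta_T-\theta}=\frac{\abs{Y_T}}{T}\le\frac{T^{h_2+\delta}\xi}{T}=T^{h_2+\delta-1}\xi.
\]
Since $h_2+\delta-1<0$, the right-hand side tends to $0$ as $T\to\infty$ on the almost sure event $\{\xi<\infty\}$. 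This gives $\hat\theta_T\to\theta$ a.s., which is exactly strong consistency. The only care needed is verifying the convergence is uniform in the bound, but since $\xi$ does not depend on $T$, this is immediate.

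\textbf{Part 2 (asymptotic normality and confidence interval).} Here the key observation is that, unlike the fBm case, no limit as $T\to\infty$ is actually being claimed---the statement asserts an \emph{exact} distributional identity for every fixed $T>0$. This makes the proof purely a variance computation. Since $Y$ is a centered Gaussian process, $Y_T$ is a centered Gaussian random variable, and hence so is $\hat\theta_T-\theta=Y_T/T$. I would invoke the variance formula~\eqref{eq:var-mBm}, namely $(\E\abs{Y(T)}^2)^{1/2}=C(H_T)T^{H_T}$, to compute
\[
\E\left(\hat\theta_T-\theta\right)^2=\frac{\E\abs{Y_T}^2}{T^2}=\frac{C^2(H_T)T^{2H_T}}{T^2}=\frac{C^2(H_T)}{T^{2(1-H_T)}}.
\]
Therefore $\frac{T^{1-H_T}}{C(H_T)}(\hat\theta_T-\theta)$ is centered Gaussian with unit variance, i.e.\ it is exactly $N(0,1)$, which is the claimed identity. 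The confidence interval then follows by the standard pivot argument: from $\P\{\abs{N(0,1)}\le z_{1-\alpha/2}\}=1-\alpha$ one rearranges the inequality $\frac{T^{1-H_T}}{C(H_T)}\abs{\hat\theta_T-\theta}\le z_{1-\alpha/2}$ to obtain $\theta\in\hat\theta_T\pm\frac{C(H_T)}{T^{1-H_T}}z_{1-\alpha/2}$.

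The main conceptual point---and the only place where one might stumble---is recognizing that Part 2 is \emph{not} an asymptotic statement requiring a central limit theorem, but an exact identity valid for each $T$, so that the entire content is the Gaussianity of $Y_T$ together with the explicit variance~\eqref{eq:var-mBm}. The genuinely probabilistic work is confined to Part 1, where the almost-sure growth bound from Theorem~\ref{l:bound-mBm} does all the heavy lifting; I expect no further obstacle there beyond selecting $\delta$ in the admissible range $(0,1-h_2)$.
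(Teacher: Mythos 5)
Your proposal is correct and follows essentially the same route as the paper: Part 1 via the almost-sure growth bound of Theorem~\ref{l:bound-mBm} with $\delta<1-h_2$, and Part 2 via the exact Gaussianity of $Y_T$ together with the variance formula~\eqref{eq:var-mBm}, noting that the claim is an exact distributional identity for each fixed $T$ rather than a limit theorem. No gaps; your explicit pivot argument for the confidence interval is a minor elaboration the paper leaves implicit.
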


\begin{proof}
1) By Theorem~\ref{l:bound-mBm}, for all $T>1$ and $\delta>0$
\[
\abs{Y_T}\le T^{h_2+\delta}\xi \quad\text{a.\,s.},
\]
where $\xi=\xi(\delta)$ is some nonnegative random variable.
Hence, if we choose $\delta<1-h_2$, then we get
\[
\frac{\abs{Y_T}}{T}\le\frac{\xi}{T^{1-h_2-\delta}}\to0, \quad\text{a.\,s.\ as }T\to\infty.
\]

2) Note that one-dimensional distributions of mBm $Y_t$ are centered Gaussian with standard deviation
$C(H_t)t^{H_t}$, see~\eqref{eq:var-mBm}.
Therefore,
\[
\frac{T^{1-H_T}}{C(H_T)}\left(\hat\theta_T-\theta\right)
=\frac{Y_T}{C(H_T)T^{H_T}}
\simeq N(0,1).\qedhere
\]
\end{proof}

\subsection{Multifractional Ornstein--Uhlenbeck process}

Let, as in subsection~\ref{integration}, $h_3>1/2$. In this subsection we consider the estimation of the unknown parameter $\theta\ge0$ by observations of the process $X=\{X_t,t\geq 0\}$ that is a solution of the stochastic differential equation of Langevin type,
\begin{equation}\label{eq:equation}
X_t=x_0+\theta\int_0^tX_s\,ds+Y_t,
\end{equation}
where $x_0\in\R$ is a known constant, $Y=\{Y_t,t\geq 0\}$ is an mBm.
This solution exists and is unique, see~\cite[Th.~4.1]{Approx-mBm}.

Note that the trajectories of the processes $Y$ and consequently $X$ are a.\,s.\ H\"older continuous up to order $h_3$.
Therefore, according to subsection~\ref{integration}, path-wise integrals $\int_0^TX_s\,dX_s$ and $\int_0^TX_s\,dY_s$ are well defined.
One can verify that the solution of~\eqref{eq:equation} can be  represented in the following form
\[
X_t=e^{\theta t}\left(x_0+\int_0^te^{-\theta s}\,dY_s\right).
\]
Using the integration-by-parts, this process can be written as follows
\begin{equation}\label{eq:solution}
X_t=x_0e^{\theta t}+\theta e^{\theta t}\int_0^te^{-\theta s}Y_s\,ds+Y_t.
\end{equation}
We call the process $X=\{X_t,t\geq 0\}$ \emph{multifractional Ornstein--Uhlenbeck process.}

Let, more precisely, our goal be to estimate the unknown drift parameter $\theta\in \R$ by the continuous-time observations on the interval $[0,T]$.
Consider the estimator
\begin{equation}\label{eq:estimator}
\hat\theta_T=\frac{\int_0^TX_s\,dX_s}{\int_0^TX_s^2\,ds}.
\end{equation}

\begin{rem}
In the case of the equation driven by ordinary fBm the estimator~\eqref{eq:estimator} was studied in~\cite{BESO,hunu,KMM}.
Hu and Nualart \cite{hunu} proved that in the ergodic case ($\theta<0$) it is strongly consistent for all $H\ge\frac12$ and asymptotically normal for $H\in[\frac12,\frac34)$.
They considered $\int_0^TX_t\,dX_t$ in~\eqref{eq:estimator} as a divergence-type integral.
In \cite{BESO,KMM} the corresponding non-ergodic case $\theta>0$ was investigated and the strong consistency of the estimator \eqref{eq:estimator} was proved for $H\ge\frac12$.
It was   also obtained in \cite{BESO}  that
$e^{\theta t}\left(\widehat\theta_t-\theta\right)$
converges in law to $2\theta\mathcal C(1)$ as $t\to\infty$, where $\mathcal C(1)$ is the standard Cauchy distribution.
In~\cite{EMESO} the more general situation was studied, namely the non-ergodic Ornstein-Uhlenbeck process driven by a Gaussian process.
\end{rem}

Since by \eqref{eq:equation},
$dX_s=\theta X_s\,ds+dY_s$,
we have that $\hat\theta_T$ admits the following stochastic representation
\[
\hat\theta_T=\theta+\frac{\int_0^TX_s\,dY_s}{\int_0^TX_s^2\,ds}.
\]

Denote by $\mathfrak Z$ a class of random variables $\zeta\ge0$  with the following property: there exist positive constants $C_1$ and $C_2$ not depending on $T$ such that for all $u>0$
\[
\P(\zeta>u)\le C_1e^{-C_2u^2}.
\]

\begin{lemma}\label{l:numerator}
Let $\eps>0$, $T>1$, $\theta>0$.
Then there exists such $\zeta\in\mathfrak Z$ that
\begin{equation}\label{eq:numer-1}
\abs{\int_0^TX_s\,dY_s}\le\zeta^2
T^{h_2+\eps+1}e^{\theta T}.
\end{equation}
\end{lemma}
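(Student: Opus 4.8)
The plan is to bound the pathwise integral using the estimate~\eqref{eq2.2}, which requires controlling two factors: the supremum of the fractional derivative $\mathcal{D}_{b-}^{1-\alpha}Y_{b-}$ and the $L_1$-norm of $\mathcal{D}_{a+}^{\alpha}X$ on $[0,T]$. Fix $\alpha\in(1-h_3,\,\tfrac12)$, which is possible since $h_3>\tfrac12$. First I would express everything in terms of $Y$ by recalling the representation~\eqref{eq:solution}, so that $X_s=x_0e^{\theta s}+\theta e^{\theta s}\int_0^se^{-\theta r}Y_r\,dr+Y_s$; in particular $X$ inherits the H\"older regularity of $Y$ up to order $h_3$, and the integral $\int_0^TX_s\,dY_s$ is well defined.

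The key observation is that both fractional derivatives can be written as integrals against H\"older-type differences of the trajectories, and then dominated using the two main growth results of Section~\ref{sec:mBm}. For the derivative of $Y_{b-}$, I would use the increment bound~\eqref{bound-incr} from Theorem~\ref{l:bound-incr}: each difference $Y(t_1)-Y(t_2)$ appearing in $\mathcal{D}_{T-}^{1-\alpha}Y_{T-}$ is controlled by $(t_1^{h_2+\eps}\vee1)(t_1-t_2)^{h_3}(\abs{\log(t_1-t_2)}^p\vee1)\eta$ for a random variable $\eta\in\mathfrak Z$. Since $\alpha>1-h_3$, the exponent $h_3$ beats the singularity $(T-s)^{\alpha-1}$ and the kernel $(s-u)^{\alpha-2}$ in the defining integrals, so the resulting integrals converge and produce a polynomial factor $T^{h_2+\eps}$ (up to logarithmic corrections absorbed into $\eps$) times $\eta$. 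For the $L_1$-norm of $\mathcal{D}_{a+}^{\alpha}X$, I would similarly bound the pointwise values and increments of $X$: using~\eqref{eq:solution}, the growth of $\abs{Y_s}$ is controlled by Theorem~\ref{l:bound-mBm} giving $(s^{h_2+\delta}\vee1)\xi$, the increments of the $Y$-part are controlled by Theorem~\ref{l:bound-incr}, and the smooth drift term $\theta e^{\theta s}\int_0^se^{-\theta r}Y_r\,dr$ contributes a factor of order $e^{\theta T}$ times the same growth random variable. Integrating $\abs{\mathcal{D}_{a+}^{\alpha}X}$ over $[0,T]$ then yields a bound of the form $T^{h_2+\eps+1}e^{\theta T}$ times a second random variable from $\mathfrak Z$.

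Multiplying the two factors via~\eqref{eq2.2} gives a product of two random variables each in $\mathfrak Z$; since a product of two such variables has a tail dominated by $C_1e^{-C_2u}$, I would instead package the bound as $\zeta^2$ for a single $\zeta\in\mathfrak Z$ by taking $\zeta$ to be the maximum of the two random variables (each of which has a Gaussian-type tail), so that the product is at most $\zeta^2$ with $\zeta\in\mathfrak Z$. Collecting the polynomial and exponential factors then reproduces exactly the claimed bound $\abs{\int_0^TX_s\,dY_s}\le\zeta^2T^{h_2+\eps+1}e^{\theta T}$.

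The main obstacle will be the careful bookkeeping of the singular kernels in the fractional derivatives: one must verify that the logarithmic correction $\abs{\log(t_1-t_2)}^p$ in~\eqref{bound-incr} and the boundary singularities at $u=s$ and at the endpoints are genuinely integrable after being multiplied by the H\"older factor $(t_1-t_2)^{h_3}$, and that all the accumulated polynomial powers of $T$ collapse to the single exponent $h_2+\eps+1$ once $\eps$ and $p$ are chosen appropriately (absorbing the logarithms into an arbitrarily small power of $T$). Ensuring the two growth random variables can be merged into one $\zeta\in\mathfrak Z$, and that uniformity in $T$ of the tail constants $C_1,C_2$ is preserved, is the other delicate point.
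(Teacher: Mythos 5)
Your overall strategy---estimating the integral through \eqref{eq2.2}, controlling the fractional derivative of $Y$ by the increment bound of Theorem~\ref{l:bound-incr}, controlling the fractional derivative of $X$ through the representation \eqref{eq:solution} together with Theorem~\ref{l:bound-mBm}, and merging the random factors into a single $\zeta\in\mathfrak Z$ via a maximum---is the same as the paper's. But there is a genuine gap in how you deploy the increment bound. You apply \eqref{eq2.2} once, on the whole interval $[0,T]$, and assert that ``each difference $Y(t_1)-Y(t_2)$ appearing in $\mathcal{D}_{T-}^{1-\alpha}Y_{T-}$'' is controlled by \eqref{bound-incr}. Theorem~\ref{l:bound-incr}, however, only bounds increments with lag $t_1-t_2\le 1$, whereas $\mathcal{D}_{T-}^{1-\alpha}Y_{T-}(s)$ (and likewise the inner integral $\int_0^s\abs{X_s-X_u}(s-u)^{-1-\alpha}\,du$ in $\mathcal{D}_{0+}^{\alpha}X$) involves differences with lags up to $T$. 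This is precisely why the paper does not work on $[0,T]$ directly: it first writes $\int_0^TX_s\,dY_s=\sum_k\int_k^{k+1}X_s\,dY_s$ and applies \eqref{eq2.2} on each unit interval, where every lag occurring in the fractional derivatives is automatically at most $1$. Your route can be repaired---for lags exceeding $1$ one bounds $\abs{Y_u-Y_s}\le 2\sup_{v\le T}\abs{Y_v}$ (resp.\ $\abs{X_s-X_u}\le 2\sup_{v\le T}\abs{X_v}$) by Theorem~\ref{l:bound-mBm}, and the kernels $(u-s)^{\alpha-2}$ and $(s-u)^{-1-\alpha}$ are integrable away from the diagonal---but this short-range/long-range splitting is a necessary step that your argument omits, and it is the whole point of the paper's decomposition.

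Second, your bookkeeping does not close. You claim the supremum of the $Y$-derivative is of order $T^{h_2+\eps}\eta$ and the $L_1$-norm of the $X$-derivative is of order $T^{h_2+\eps+1}e^{\theta T}$ times a variable in $\mathfrak Z$; the product of these two bounds is of order $T^{2h_2+2\eps+1}e^{\theta T}$, which (since $h_2+\eps>0$) is strictly larger than the claimed $T^{h_2+\eps+1}e^{\theta T}$, so ``collecting the factors'' does not reproduce \eqref{eq:numer-1}. On your route the correct accounting is that the $L_1$-norm of $\mathcal{D}_{0+}^{\alpha}X$ is $O\left(e^{\theta T}\right)$ times a variable in $\mathfrak Z$, with a constant uniform in $T$ (the polynomial contributions such as $T^{h_2+\eps+1}$ are dominated by $e^{\theta T}$ for $T>1$, at the price of a constant depending on $\theta$); the product is then at most $CT^{h_2+\eps}e^{\theta T}\zeta\eta\le\zeta'^2T^{h_2+\eps+1}e^{\theta T}$ with $\zeta'\in\mathfrak Z$. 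The paper obtains the exponent $h_2+\eps+1$ differently: each unit-interval integral is bounded by terms of order $(k+1)^{h_2+\eps}e^{\theta(k+1)}\zeta^2$, and the extra power of $T$ comes from summing about $T$ such terms. Finally, note that the logarithmic correction in \eqref{bound-incr} cannot be ``absorbed into an arbitrarily small power of $T$'': it blows up as the lag tends to $0$, not as $T\to\infty$; the paper absorbs it into a slightly reduced H\"older exponent $h_3-r$ with $0<r<h_3-1/2$, which is also what forces its choice $1-h_3+r<\alpha<h_3-r$.
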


\begin{proof}
By~\eqref{eq:solution},
\begin{equation}\label{eq:c1}
\sup_{0\le s\le t}\abs{X_s}\le\abs{x_0}e^{\theta t}+\theta e^{\theta t}\int_0^te^{-\theta s}\sup_{0\le u\le s}\abs{Y_u}\,ds+\sup_{0\le s\le t}\abs{Y_s}.
\end{equation}
Then \eqref{eq:equation} implies that for $t_1>t_2\ge0$
\begin{equation}\label{eq:c2}
\begin{split}
\abs{X_{t_1}-X_{t_2}}
&\le\theta\int_{t_2}^{t_1}\left(\abs{x_0}e^{\theta s}+\theta e^{\theta s}\int_0^se^{-\theta v}\sup_{0\le u\le v}\abs{Y_u}\,dv+\sup_{0\le u\le s}\abs{Y_u}\right)ds\\
&\quad+\abs{Y_{t_1}-Y_{t_2}}.
\end{split}
\end{equation}
Furthermore, using Theorems~\ref{l:bound-mBm} and \ref{l:bound-incr} we get for $t\ge0$ and $\delta>0$
\begin{equation}\label{eq:c3}
\sup_{0\le s\le t}\abs{Y_s}\le\left(t^{h_2+\delta}+1\right)\xi
\qquad\text{a.\,s.},
\end{equation}
and for $0\le t_2<t_1\le t_2+1$
\begin{equation}\label{eq:c4}
\begin{split}
\abs{Y_{t_1}-Y_{t_2}}&\le
\left(t_1^{h_2+\eps}+1\right)(t_1-t_2)^{h_3}\left(\abs{\log(t_1-t_2)}^p+1\right)\eta\\
&=\left(t_1^{h_2+\eps}+1\right)\left((t_1-t_2)^{h_3}\abs{\log(t_1-t_2)}^p+(t_1-t_2)^{h_3}\right)\eta\\
&\le C\left(t_1^{h_2+\eps}+1\right)(t_1-t_2)^{h_3-r}\eta
\qquad\text{a.\,s.},
\end{split}
\end{equation}
where $0<r<h_3-1/2$.
Then by \eqref{eq:c3},
\[
\int_0^te^{-\theta s}\sup_{0\le u\le s}\abs{Y_u}\,ds
\le\xi\int_0^te^{-\theta s}\left(s^{h_2+\delta}+1\right)\,ds
\le C\xi
\]
Therefore, from \eqref{eq:c1} we obtain
\[
\sup_{0\le s\le t}\abs{X_s}\le\abs{x_0}e^{\theta t}+\theta e^{\theta t}C\xi+\left(t^{h_2+\delta}+1\right)\xi.
\]
It follows from \eqref{eq:c2} and \eqref{eq:c4} that
\begin{align*}
\abs{X_{t_1}-X_{t_2}}
&\le\theta\int_{t_2}^{t_1}\left(\abs{x_0}e^{\theta s}+\theta e^{\theta s}C\xi+\left(s^{h_2+\delta}+1\right)\xi\right)ds\\
&\quad+ C\left(t_1^{h_2+\eps}+1\right)(t_1-t_2)^{h_3-r}\eta.
\end{align*}
These formulas can be rewritten using simplified notation as follows:
 \begin{align}
 \sup_{0\le s\le t}\abs{X_s}&\le\left(e^{\theta t}+t^{h_2+\delta}\right)\zeta,\label{eq:c5}\\
\abs{X_{t_1}-X_{t_2}}
&\le\zeta\left(e^{\theta t_1}+t_1^{h_2+\delta}\right)(t_1-t_2)+ \zeta\left(t_1^{h_2+\eps}+1\right)(t_1-t_2)^{h_3-r}.\label{eq:c6}
\end{align}
In order to estimate $\int_0^TX_s\,dY_s$ we write
\begin{multline}\label{eq:c7}
\abs{\int_0^TX_s\,dY_s}\le\sum_{k=0}^{[T]+1}
\abs{\int_k^{k+1}X_s\,dY_s}\\
\le\sum_{k=0}^{[T]+1}\sup_{k\le s\le k+1}
\abs{\left(D^{1-\alpha}_{k+1-}Y_{k+1-}\right)(s)}
\abs{\int_k^{k+1}\left(D^{\alpha}_{k+}X_{k+}\right)(s)\,ds},
\end{multline}
where  $1-h_3+r<\alpha<h_3-r$, see~\eqref{eq2.2}.
Now we need to estimate the fractional derivatives. By~\eqref{eq:c4},
\begin{equation}\label{eq:c8}
\begin{split}
&\abs{\left(D^{1-\alpha}_{k+1-}Y_{k+1-}\right)(s)}\\
&\quad\le\frac1{\Gamma(\alpha)}\left(\frac{\abs{Y_{k+1}-Y_s}}{(k+1-s)^{1-\alpha}}
+(1-\alpha)\int_s^{k+1}\frac{\abs{Y_u-Y_s}}{(u-s)^{2-\alpha}}\,du\right)\\
&\quad\le\zeta\Biggl(\left((k+1)^{h_2+\eps}+1\right)(k+1-s)^{h_3-r-1+\alpha}\\
&\quad\quad+\int_s^{k+1}\left(u^{h_2u+\eps}+1\right)(u-s)^{h_3-r-2+\alpha}\,du\Biggr)\\
&\quad\le\zeta\left((k+1)^{h_2+\eps}+1\right)
\left((k+1-s)^{h_3-r-1+\alpha}
+\int_s^{k+1}(u-s)^{h_3-r-2+\alpha}\,du\right)\\
&\quad\le\zeta\left((k+1)^{h_2+\eps}+1\right)(k+1-s)^{h_3-r-1+\alpha}\\
&\quad\le\zeta\left((k+1)^{h_2+\eps}+1\right).
\end{split}
\end{equation}
Applying \eqref{eq:c5}--\eqref{eq:c6}, we get
\begin{align*}
&\abs{\left(D^{\alpha}_{k+}X\right)(s)}
\le\frac1{\Gamma(1-\alpha)}\left(\frac{\abs{X_s}}{(s-k)^{\alpha}}
+\alpha\int_k^s\frac{\abs{X_s-X_u}}{(s-u)^{\alpha+1}}\,du\right)\\
&\quad\le\zeta\Biggl(\left(e^{\theta s}+s^{h_2+\delta}\right)(s-k)^{-\alpha}\\
&\quad\quad+\int_k^s\left(\left(e^{\theta s}+s^{h_2+\delta}\right)(s-u)^{-\alpha}
+\left(s^{h_2+\eps}+1\right)(s-u)^{h_3-r-\alpha-1}\right)\,du\Biggr)\\
&\quad\le\zeta\left(\left(e^{\theta s}+s^{h_2+\delta}\right)\left((s-k)^{-\alpha}+(s-k)^{1-\alpha}\right)
+\left(s^{h_2+\eps}+1\right)(s-k)^{h_3-r-\alpha}\right).
\end{align*}
Then
\begin{equation}\label{eq:c9}
\int_k^{k+1}\abs{\left(D^{\alpha}_{k+}X\right)(s)}\,ds
\le\zeta\left(e^{\theta (k+1)}+(k+1)^{h_2+\delta}
+(k+1)^{h_2+\eps}+1\right).
\end{equation}
Combining \eqref{eq:c7}--\eqref{eq:c9}, we get \begin{multline*}
\abs{\int_0^TX_s\,dY_s}\le\zeta^2\sum_{k=0}^{[T]+1}
\left((k+1)^{h_2+\eps}+1\right)\\
\times\left(e^{\theta (k+1)}+(k+1)^{h_2+\delta}
+(k+1)^{h_2+\eps}+1\right).
\end{multline*}
Now, each summand in the right-hand side of the latter inequality can be bounded by $C T^{h_2+\eps}e^{\theta T}$, whence~\eqref{eq:numer-1} follows.
\end{proof}

\begin{theorem}
Let $\theta>0$.
Then the estimator $\hat\theta_T$ is strongly consistent as $T\to\infty$.
\end{theorem}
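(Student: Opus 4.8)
The plan is to work from the stochastic representation
\[
\hat\theta_T=\theta+\frac{\int_0^TX_s\,dY_s}{\int_0^TX_s^2\,ds},
\]
so that strong consistency reduces to showing that the fraction tends to $0$ a.s. The numerator is already controlled by Lemma~\ref{l:numerator}, which gives $\abs{\int_0^TX_s\,dY_s}\le\zeta^2T^{h_2+\eps+1}e^{\theta T}$ with $\zeta\in\mathfrak Z$. The essential work therefore lies in producing a matching \emph{exponential lower bound} for the denominator $\int_0^TX_s^2\,ds$.

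First I would analyse the limiting behaviour of $X_t$. Writing $X_t=e^{\theta t}\bigl(x_0+\int_0^te^{-\theta s}\,dY_s\bigr)$ and applying pathwise integration by parts (legitimate since $Y$ is H\"older of order $h_3>1/2$ and $e^{-\theta s}$ is smooth), one obtains
\[
\int_0^te^{-\theta s}\,dY_s=e^{-\theta t}Y_t+\theta\int_0^te^{-\theta s}Y_s\,ds.
\]
By Theorem~\ref{l:bound-mBm}, $\abs{Y_s}\le\bigl(s^{h_2+\delta}\vee1\bigr)\xi$ a.s., hence $e^{-\theta t}Y_t\to0$ and $\int_0^te^{-\theta s}Y_s\,ds$ converges absolutely as $t\to\infty$. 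Consequently $x_0+\int_0^te^{-\theta s}\,dY_s\to\Lambda:=x_0+\theta\int_0^\infty e^{-\theta s}Y_s\,ds$ a.s.

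Next I would argue that $\Lambda\neq0$ a.s. Being an a.s.\ limit of Gaussian random variables (Riemann sums built from $Y$), $\Lambda$ is itself Gaussian, and its variance is strictly positive because $Y$ is a nondegenerate Gaussian process (indeed $\E\abs{Y_s}^2=C(H_s)^2s^{2H_s}>0$ for $s>0$). A nondegenerate Gaussian law assigns zero mass to any single point, so $\P(\Lambda=0)=0$. On the full-probability event $\set{\Lambda\neq0}$, the convergence just established provides a (random) $s_0$ with $\bigl|x_0+\int_0^se^{-\theta u}\,dY_u\bigr|\ge\frac{\abs{\Lambda}}{2}$, hence $\abs{X_s}\ge\frac{\abs{\Lambda}}{2}e^{\theta s}$, for all $s\ge s_0$. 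Integrating yields, for $T$ large,
\[
\int_0^TX_s^2\,ds\ge\frac{\Lambda^2}{4}\int_{s_0}^Te^{2\theta s}\,ds\ge C\Lambda^2e^{2\theta T}
\]
for some positive constant $C$.

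Finally I would combine the two bounds:
\[
\abs{\hat\theta_T-\theta}=\frac{\abs{\int_0^TX_s\,dY_s}}{\int_0^TX_s^2\,ds}
\le\frac{\zeta^2T^{h_2+\eps+1}e^{\theta T}}{C\Lambda^2e^{2\theta T}}
=\frac{\zeta^2}{C\Lambda^2}\,T^{h_2+\eps+1}e^{-\theta T}\xrightarrow[T\to\infty]{}0\quad\text{a.s.},
\]
since for $\theta>0$ the exponential factor $e^{-\theta T}$ dominates the polynomial growth. The main obstacle is the denominator: establishing the a.s.\ convergence of $\int_0^te^{-\theta s}\,dY_s$ to the limit $\Lambda$ and, crucially, the non-degeneracy $\Lambda\neq0$ a.s., which is what guarantees the sharp $e^{2\theta T}$ lower bound. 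Without $\Lambda\neq0$ the denominator could fail to outgrow the numerator, so this Gaussian non-degeneracy argument is the delicate point of the proof.
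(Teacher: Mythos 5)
Your proposal is correct, and its skeleton coincides with the paper's: the same stochastic representation $\hat\theta_T-\theta=\int_0^TX_s\,dY_s\big/\int_0^TX_s^2\,ds$, the same appeal to Lemma~\ref{l:numerator} for the numerator, and the same identification of the a.\,s.\ limit $\Lambda=x_0+\theta\int_0^\infty e^{-\theta s}Y_s\,ds\neq0$ via \eqref{eq:solution} and Theorem~\ref{l:bound-mBm}. The one genuine difference is how the denominator is handled: the paper applies L'H\^opital's rule pathwise, reducing $\int_0^TX_s^2\,ds$ to $X_T^2=e^{2\theta T}\bigl(x_0+\theta\int_0^Te^{-\theta s}Y_s\,ds+e^{-\theta T}Y_T\bigr)^2$, whereas you bypass L'H\^opital entirely and lower-bound the integral directly, $\int_0^TX_s^2\,ds\ge C\Lambda^2e^{2\theta T}$ for $T$ past a random time $s_0$. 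Your route is more elementary and in fact slightly cleaner: the pathwise L'H\^opital step implicitly requires first knowing $\int_0^TX_s^2\,ds\to\infty$ a.\,s.\ (which the paper does not check, though it follows from the same limit $\Lambda\neq0$), while your argument needs nothing beyond the convergence itself; the price is only the bookkeeping with $s_0$, harmless since one takes $T\to\infty$. One caution: your justification of $\P(\Lambda=0)=0$ — that $\E\abs{Y_s}^2>0$ for each $s$ — does not by itself imply that $\mathrm{Var}\bigl(\int_0^\infty e^{-\theta s}Y_s\,ds\bigr)>0$, since pointwise non-degeneracy does not rule out cancellation in the integral; the clean fix is the non-negativity and uniform positivity of the mBm covariance kernel (exactly the computation the paper carries out for $\sigma_T^2$ in its $\theta=0$ proof). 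The paper itself merely asserts this positivity, so your proof meets (and, on the L'H\^opital point, slightly exceeds) the paper's own standard of rigor.
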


\begin{proof}
By \eqref{eq:numer-1},
\[
\abs{\hat\theta_T-\theta}=\frac{\abs{\int_0^TX_s\,dY_s}}{\int_0^TX_s^2\,ds}
\le\zeta^2\frac{T^{h_2+\eps+1}e^{\theta T}}{\int_0^TX_s^2\,ds}.
\]
Applying L'H\^opital's rule and~\eqref{eq:solution}, we get
\begin{equation}\label{eq:lim1}
\begin{split}
\lim_{T\to\infty}\frac{T^{h_2+\eps+1}e^{\theta T}}{\int_0^TX_s^2\,ds}
&=\lim_{T\to\infty}\frac{\left((h_2+\eps+1)T^{h_2+\eps}+T^{h_2+\eps+1}\theta\right)e^{\theta T}}{X_T^2}\\
&=\lim_{T\to\infty}\frac{\left((h_2+\eps+1)T^{h_2+\eps}+T^{h_2+\eps+1}\theta\right)}{e^{\theta T}\left(x_0+\theta\int_0^Te^{-\theta s}Y_s\,ds+e^{-\theta T}Y_T\right)^2}.
\end{split}
\end{equation}
Using the bound~\eqref{eq:bound-proc}, we obtain that
$e^{-\theta T}Y_T\to0$ a.\,s.\ as $T\to\infty$.
Moreover, with probability 1 there exists the limit
$\lim_{T\to\infty}\int_0^Te^{-\theta s}Y_s\,ds
=\int_0^\infty e^{-\theta s}Y_s\,ds$.
Obviously, this limit is a Gaussian random variable.
This implies that
\[
\lim_{T\to\infty}\left(x_0+\theta\int_0^Te^{-\theta s}Y_s\,ds+e^{-\theta T}Y_T\right)^2
=\left(x_0+\theta\int_0^\infty e^{-\theta s}Y_s\,ds\right)^2>0\quad\text{a.\,s.}
\]
Therefore, it follows from~\eqref{eq:lim1} that
\[
\lim_{T\to\infty}\frac{T^{h_2+\eps+1}e^{\theta T}}{\int_0^TX_s^2\,ds}=0\quad\text{a.\,s.}
\]
This completes the proof.
\end{proof}

\begin{theorem}
Let $\theta=0$.
Then the estimator $\hat\theta_T$ is consistent as $T\to\infty$.
\end{theorem}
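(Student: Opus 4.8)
The plan is to exploit the collapse of \eqref{eq:equation} when $\theta=0$: there the equation reads $X_s=x_0+Y_s$, so that $dX_s=dY_s$ and the numerator of the estimator \eqref{eq:estimator} can be evaluated exactly. Since the trajectories of $X$ are a.s.\ H\"older continuous of order $h_3>1/2$, the pathwise integral of subsection~\ref{integration} obeys the Young change-of-variables rule, whence $\int_0^TX_s\,dX_s=\tfrac12(X_T^2-x_0^2)$ (recall $X_0=x_0$). Consequently the estimator takes the explicit form
\[
\hat\theta_T=\frac{X_T^2-x_0^2}{2\int_0^TX_s^2\,ds},
\]
and establishing consistency amounts to proving that this ratio tends to $0$ in probability.

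Next I would control the numerator. As $X_T=x_0+Y_T$ is Gaussian with $\E Y_T^2=C(H_T)^2T^{2H_T}\le K_1^2\,T^{2h_2}$ for $T\ge1$ by \eqref{eq:var-mBm}--\eqref{eq:variance}, the family $\set{Y_T/T^{h_2}}$ is bounded in probability, so that $X_T^2=O_{\P}(T^{2h_2})$; more precisely $\P\set{X_T^2>KT^{2h_2}}$ can be made uniformly small by choosing $K$ large. Thus the numerator is at most of order $T^{2h_2}$.

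The crucial and hardest step is a matching lower bound for the denominator: I must show that, with probability tending to $1$, $\int_0^TX_s^2\,ds$ is of order at least $T^{2h_1+1}$ (or any fixed power strictly larger than $2h_2$). Using $(x_0+y)^2\ge\tfrac12 y^2-x_0^2$ reduces this to the quadratic functional $W_T=\int_0^TY_s^2\,ds$. Its mean satisfies $\E W_T=\int_0^TC(H_s)^2s^{2H_s}\,ds\ge cT^{2h_1+1}$, since $H_s\ge h_1$ and $C$ is bounded below on $[h_1,h_2]$, while the Cauchy--Schwarz bound $(\E Y_sY_t)^2\le\E Y_s^2\,\E Y_t^2$ gives $\E W_T^2\le3(\E W_T)^2$. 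In this non-ergodic regime, however, $W_T/\E W_T$ does not concentrate (in the self-similar fBm case it converges in law to a positive random variable), so a second-moment/Paley--Zygmund argument only yields $W_T\gtrsim T^{2h_1+1}$ with a fixed positive probability, which is \emph{not} sufficient for convergence in probability. To upgrade this to probability tending to $1$, I would diagonalise the functional by Karhunen--Lo\`eve, writing $W_T=\sum_k\lambda_k^{(T)}\xi_k^2$ with i.i.d.\ standard normal $\xi_k$ and $\lambda_k^{(T)}$ the eigenvalues of the covariance operator of $Y$ on $L^2[0,T]$, and then exploit that the number of eigenvalues comparable to the leading ones grows to infinity, so that the law of large numbers for the $\chi^2$-variables (equivalently, a Laplace-transform/Chebyshev small-ball estimate $\P\set{W_T<a}\le e^{\lambda a}\prod_k(1+2\lambda\lambda_k^{(T)})^{-1/2}$) forces $\P\set{W_T<\eps_T\E W_T}\to0$ whenever $\eps_T\to0$. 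Proving the required lower bounds on these eigenvalues, i.e.\ a small-ball estimate for the $L^2$-norm of mBm, is the main obstacle, since the machinery developed so far furnishes only \emph{upper} bounds on the growth of $Y$.

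Finally I would assemble the pieces. Fix $\nu>0$ with $2h_2+\nu<2h_1+1$, which is possible because the standing assumption $h_3>1/2$ forces $h_1>1/2$ and $h_2<1$, hence $2(h_2-h_1)<1$. For every $\eta>0$ and every fixed $\delta>0$,
\[
\P\set{\abs{\hat\theta_T}>\eta}\le\P\set{X_T^2+x_0^2>2\eta\delta\,T^{2h_2+\nu}}+\P\set{\textstyle\int_0^TX_s^2\,ds<\delta\,T^{2h_2+\nu}}.
\]
The first term tends to $0$ because $X_T^2=O_{\P}(T^{2h_2})$ and $2h_2<2h_2+\nu$; the second tends to $0$ by the denominator estimate, since $\delta\,T^{2h_2+\nu}=o(\E W_T)$ (and the discarded lower-order contributions $x_0^2T$ are negligible as $2h_2+\nu>1$). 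Therefore $\hat\theta_T\to0$ in probability, which is the asserted consistency. In contrast with the case $\theta>0$ of the previous theorem, only convergence in probability is obtained: the absence of exponential growth leaves the normalised denominator with a genuinely random, non-degenerate limit, so the almost-sure argument based on $\lim_{T\to\infty}\int_0^Te^{-\theta s}Y_s\,ds$ is unavailable here, which is precisely why the statement claims consistency rather than strong consistency.
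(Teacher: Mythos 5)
Your treatment of the numerator is correct and coincides with the paper's: when $\theta=0$ one has $X_s=x_0+Y_s$, $dX_s=dY_s$, and the change-of-variables rule for the pathwise integral gives $\int_0^TX_s\,dX_s=x_0Y_T+\tfrac12Y_T^2=\tfrac12(X_T^2-x_0^2)$, which is of order $T^{2h_2}$ up to a tight random factor (the paper uses the a.s.\ bound of Theorem~\ref{l:bound-mBm}, you use Gaussianity of $Y_T$; either suffices for convergence in probability). The genuine gap is the denominator. You correctly observe that a Paley--Zygmund/second-moment argument for $W_T=\int_0^TY_s^2\,ds$ yields only a constant-probability lower bound, and you then invoke a Karhunen--Lo\`eve diagonalisation plus lower bounds on the eigenvalues of the covariance operator of mBm on $[0,T]$ --- but those bounds are never proved, and you state yourself that they are ``the main obstacle''. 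As written, the key estimate $\P\set{\int_0^TX_s^2\,ds<\delta T^{2h_2+\nu}}\to0$ is assumed, not established, so the proof is incomplete. Moreover your heuristic justification (``the number of eigenvalues comparable to the leading ones grows to infinity'', hence a law of large numbers for the $\chi^2$-weights) is inconsistent with the phenomenon you note in the same sentence: for fBm-type processes $W_T/\E W_T$ has a genuinely random limit, so no LLN-type concentration holds, and one would truly need a small-ball estimate for the $L^2$-norm of mBm --- a nontrivial result that the paper neither uses nor provides.

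The missing idea, which makes the paper's proof elementary, is to bypass the quadratic functional entirely via the Cauchy--Schwarz inequality:
\[
\int_0^TX_s^2\,ds\ge\frac1T\left(\int_0^TX_s\,ds\right)^2
=T\left(x_0+\frac1T\int_0^TY_s\,ds\right)^2 .
\]
The right-hand side involves a \emph{single} centered Gaussian random variable $\frac1T\int_0^TY_s\,ds$, whose variance $\sigma_T^2$ is bounded below by $C_1T^{2h_1}$ using the explicit covariance of mBm (positivity and boundedness away from zero of $D(H_s,H_u)$ for $H_s,H_u\in[h_1,h_2]$). Consistency then follows from one-dimensional Gaussian anti-concentration alone:
\[
\P\set{\frac{T^{2h_2+2\delta-1}}{\left(x_0+\sigma_T\mathcal N(0,1)\right)^2}>\eps^2}
\le\P\set{\abs{\frac{x_0}{\sigma_T}+\mathcal N(0,1)}<\frac{T^{h_2-h_1+\delta-1/2}}{\eps C_1^{1/2}}}\to0
\]
for $0<\delta<1/2-h_2+h_1$, a nonempty range since $h_1>1/2$ and $h_2<1$ (the same numerics as your choice of $\nu$). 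No eigenvalue estimates or small-ball asymptotics are needed; replacing your unproven step by this inequality completes the argument along exactly the lines you set up.
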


\begin{proof}
In this case $X_t=x_0+Y_t$.
Hence,
\[
\int_0^TX_s\,dY_s=x_0Y_T+\int_0^TY_s\,dY_s
=x_0Y_T+\frac12Y_T^2,
\]
see formula~(32) in~\cite{zele}.
Then for $T>1$,
\[
\abs{\hat\theta_T-\theta}=\frac{\abs{\int_0^TX_s\,dY_s}}{\int_0^TX_s^2\,ds}
\le\frac{x_0\abs{Y_T}+\frac12Y_T^2}{\int_0^TX_s^2\,ds}
\le\zeta^2\frac{T^{2h_2+2\delta}}{\int_0^TX_s^2\,ds},
\]
by Theorem~\ref{l:bound-mBm}.
It follows from the Cauchy--Schwarz inequality that
\begin{align*}
\int_0^TX_s^2\,ds
&\ge\frac{1}{T}\left(\int_0^TX_s\,ds\right)^2
=\frac{1}{T}\left(\int_0^T(x_0+Y_s)\,ds\right)^2
=T\left(x_0+\frac1T\int_0^TY_s\,ds\right)^2\\
&=T\left(x_0+\sigma_T \mathcal N(0,1)\right)^2,
\end{align*}
where $\sigma_T^2$ denotes the variance of
the centered Gaussian random variable $\frac1T\int_0^TY_s\,ds$, $\mathcal N(0,1)$ is the standard normal random variable.
Therefore, it suffices to show that
\[
\frac{T^{2h_2+2\delta-1}}{\left(x_0+\sigma_T \mathcal N(0,1)\right)^2}\xrightarrow{\P}0\quad\text{as }T\to\infty.
\]
In order to establish this convergence, we will bound $\sigma_T^2$ from below. We have
\begin{align*}
\sigma^2_T&=\frac{1}{T^2}\E\left(\int_0^TY_s\,ds\right)^2
=\frac{1}{T^2}\int_0^T\int_0^T\E Y_sY_u\,du\,ds\\
&=\frac{1}{T^2}\int_0^T\int_0^TD(H_s,H_u)\left(s^{H_s+H_u}+u^{H_s+H_u}-\abs{s-u}^{H_s+H_u}\right)\,du\,ds,
\end{align*}
where $D(x,y)=\frac{\pi}{\Gamma(x+y+1)\sin(\pi(x+y)/2)}$, see~\cite{Ayache-covar} or \cite[p.~213]{StoevTaqqu}.
Further,
\begin{align*}
\sigma^2_T
&=\frac{1}{T^2}\int_0^T\int_0^sD(H_s,H_u)\left(s^{H_s+H_u}+u^{H_s+H_u}-(s-u)^{H_s+H_u}\right)\,du\,ds\\
&\quad+\frac{1}{T^2}\int_0^T\int_s^TD(H_s,H_u)\left(s^{H_s+H_u}+u^{H_s+H_u}-(u-s)^{H_s+H_u}\right)\,du\,ds\\
&\ge\frac{1}{T^2}\left(\int_0^T\int_0^sD(H_s,H_u)u^{H_s+H_u}\,du\,ds
+\int_0^T\int_s^TD(H_s,H_u)s^{H_s+H_u}\,du\,ds\right),
\end{align*}
Since $D(x,y)$ is positive and stays bounded away from 0 for $x,y\in[h_1,h_2]$, we have
\begin{align*}
\sigma^2_T&\ge\frac{C}{T^2}\left(\int_0^T\int_0^su^{H_s+H_u}\,du\,ds
+\int_0^T\int_s^Ts^{H_s+H_u}\,du\,ds\right)\\
&=\frac{2C}{T^2}\int_0^T\int_0^su^{H_s+H_u}\,du\,ds
=\frac{2C}{T^2}\int_0^T\int_0^sT^{H_s+H_u}\left(\frac{u}{T}\right)^{H_s+H_u}\,du\,ds\\
&\ge\frac{2C}{T^2}\int_0^T\int_0^sT^{2h_1}\left(\frac{u}{T}\right)^{2h_2}\,du\,ds
=\frac{2CT^{2h_1}}{(2h_2+1)(2h_2+2)}
= C_1T^{2h_1}
\end{align*}
for $T>1$.

Thus, for any $\eps>0$
\begin{align*}
\P\set{\frac{T^{2h_2+2\delta-1}}{\left(x_0+\sigma_T \mathcal N(0,1)\right)^2}>\eps^2}
&\le\P\set{\abs{\frac{x_0}{\sigma_T}+ \mathcal N(0,1)}<\frac{T^{h_2+\delta-1/2}}{\eps\sigma_T}}\\
&\le\P\set{\abs{\frac{x_0}{\sigma_T}+ \mathcal N(0,1)}<\frac{T^{h_2-h_1+\delta-1/2}}{\eps C_1^{1/2}}}
\to0
\end{align*}
as $T\to\infty$ for $0<\delta<1/2-h_2+h_1$.
\end{proof}


\end{document}